\newcommand{\matlab}{MATLAB\textsuperscript{\textregistered}}
\newcommand{\intel}{Intel\textsuperscript{\textregistered}}
\newif\ifmatlab\matlabtrue
\def\matlab{{{\sc matlab }}}
\newcommand\IR{ {\mathbb R}}
 \newtheorem{theorem}{Theorem}[section]
 \newtheorem{lemma}[theorem]{Lemma}
 \newtheorem{proposition}[theorem]{Proposition}
 \newtheorem{remark}[theorem]{Remark}
 \newenvironment{proof}[1][Proof]{\begin{trivlist}
 \item[\hskip \labelsep {\bfseries #1}]}{\end{trivlist}}
\numberwithin{equation}{section}
\begin{document}

\title{Control strategies for the Fokker-Planck equation}

\author{Tobias Breiten\footnote{Institute of Mathematics and Scientic
Computing,
Karl-Franzens-Universit\"at, Heinrichstr. 36, 8010 Graz, Austria ({\tt
tobias.breiten@uni-graz.at})} \quad
Karl Kunisch\footnote{Institute of Mathematics and Scientic Computing,
Karl-Franzens-Universit\"at, Heinrichstr. 36, 8010 Graz, Austria and Johann
Radon Institute for Computational and Applied
Mathematics~(RICAM), Austrian Academy of Sciences, Altenbergerstra\ss e
69, A-4040 Linz, Austria ({\tt
karl.kunisch@uni-graz.at})} \quad
Laurent Pfeiffer\footnote{Institute of Mathematics and Scientic Computing, Karl-Franzens-Universit\"at, Heinrichstr. 36, 8010 Graz, Austria ({\tt
laurent.pfeiffer@uni-graz.at})}}

\maketitle

\begin{abstract}
Using a projection-based decoupling of the Fokker-Planck equation, control
strategies that allow to speed up the convergence to the stationary
distribution are investigated. By
means of an operator theoretic framework for a bilinear control system,
two different feedback control laws are proposed. Projected Riccati and
Lyapunov equations are derived and properties of the associated solutions are
given. The well-posedness of the closed loop systems is shown and local and
global stabilization results, respectively, are obtained. An essential tool in
the construction of the controls is the choice of appropriate control shape
functions. Results for a two dimensional double well potential illustrate the
theoretical findings in a numerical setup.
\end{abstract}

\textbf{Mathematics Subject Classification.} 35Q35, 49J20, 93D05, 93D15.\newline

\textbf{Keywords}. Fokker-Planck equation, bilinear control systems, Lyapunov functions,
Riccati equation, Lyapunov equation.

\section{Introduction}
\label{sec:intro}

 To partially set the stage, let us consider a very large set of dragged Brownian particles, whose motion is described by a stochastic differential equation (SDE) in $\IR^{2n}$ called the  Langevin equation:
\begin{equation*}
\text{d}x(s) = y(s) \, \text{d} s; \quad
\text{d}y(s) = -\beta y(s) \, \text{d} s + F(x,s) \, \text{d} s + \sqrt{2 \beta k T/m} \, \text{d} B(s).
\end{equation*}
Here $s$ is the time variable, $\beta > 0$ is a friction parameter, $m$ the mass of the particle, $k$ the Boltzmann constant, $T$ is the temperature, and  $B$ is an $n$-dimensional Brownian motion. The force $F$ is assumed to be related  to a potential $V$, so that $F(x,s)= -\nabla V(x,s)$.
For large values of $\beta$, the Langevin equation can be approximated by the Smoluchowski equation:
\begin{equation*}
\text{d} x(t)= - \nabla V(x,t) \text{d} t + \sqrt{2\nu} \, \text{d} B_t,
\end{equation*}
where $t= s/\beta$ and $\nu= kT/m$. The probability density function $\rho$ of the solution to the above equation is the solution to the Fokker-Planck equation:
\begin{equation*}
\frac{\partial \rho}{\partial t} = \nabla \cdot J(x,t), \quad \text{where: } J(x,t)= \nu \nabla \rho + \rho
\nabla V.
\end{equation*}
The variable $J$ is the probability current. The simplification of the Langevin equation is discussed in \cite[Section 10.4]{Ris96}, see also \cite[Section 4.3.4]{Gar04} for details on the connection between SDEs and the Fokker-Planck equation. In this article, we consider the Fokker-Planck equation with reflective boundary conditions:
\begin{equation}\label{eq:FPE}
\begin{aligned}
  \frac{\partial \rho}{\partial t} &= \nu \Delta \rho + \nabla \cdot (\rho
\nabla V) && \text{in } \Omega \times (0,\infty), \\
0&=(\nu \nabla \rho + \rho \nabla V) \cdot \vec{n}  && \text{on } \Gamma \times
(0,\infty),\\
\rho(x,0)&=\rho_0(x)  && \text{in } \Omega,
\end{aligned}
\end{equation}
where $\Omega \subset \mathbb R^n$
denotes a bounded domain  with smooth boundary
$\Gamma = \partial\Omega$, and
$\rho_0$ denotes an initial probability distribution with
$\int_\Omega \rho_0(x) \mathrm{d}x=1.$
The boundary condition states that the probability current has to vanish in the
normal direction on the boundary. This models the fact that any particle
reaching the boundary $\Gamma$ is reflected \cite[Section 5.2.3]{Gar04}. We
refer to \cite{LioS84} for a description of reflected SDEs.


The force $F$ can be an electric force, created by focusing a laser beam. The obtained structure is called optical tweezer and enables to manipulate microscopic particles, see \cite{JonMV15}. We refer to \cite{GorBL12} for an overview of feedback control problems in optical trapping.
Following the discussion in \cite{HarST13}, let us assume that we can interact
with the particle by means of an optical tweezer such that the potential $V$ is
of the form
\begin{align}\label{eq:FPE_potential}
    V(x,t)=G(x) + \alpha(x) u(t),
\end{align}
where $\alpha$ is a control shape function satisfying
\begin{align}\label{eq:shape_boundary_cond}
\alpha \in W^{1,\infty}(\Omega)\cap W^{2,\max(2,n)} (\Omega)
\text{ with }
\nabla \alpha \cdot \vec{n}=0 \ \ \text{on }
\Gamma.
\end{align}
A more precise characterization of $\alpha$ will be given in Subsection
\ref{subsec:alpha}. Thus the control enters in bilinear and separable form into
the state equation. While the case that $G$ is piecewise smooth
is certainly of interest, see e.g. \cite{Ris96}, we  focus here on the regular
case and assume that $G\in W^{1,\infty}(\Omega)\cap W^{2,\max(2,n)} (\Omega).$

We will consider system \eqref{eq:FPE} as an abstract bilinear control system
of the form
\begin{align}\label{eq:aux_intro_1}
  \dot{y}= \mathcal{A} y + u\mathcal{N} y + \mathcal{B}u, \ y(0)=y_0,
\end{align}
on an appropriate Hilbert space $\mathcal{Y}.$ In this setting, the unbounded
operator $\mathcal{A}$ will be the infinitesimal generator of an analytic,
strongly continuous semigroup on $\mathcal{Y}.$  The control objective will be to improve the asymptotic stability of the system to a steady state $\rho_\infty$.
With regard to the design of
suboptimal feedback laws for the shifted variable $y= \rho-\rho_\infty$, we consider two different strategies either of which
are based on the linearized version of \eqref{eq:aux_intro_1}. The first
feedback law relies on the infinite horizon cost functional and  is of the form
\begin{align}\label{eq:aux_intro_2}
  \mathcal{J}(y,u)=\frac{1}{2} \int_0^\infty \langle y,\mathcal{M} y\rangle +
|u|^2 \mathrm{d}t,
\end{align}
and is obtained by means of an algebraic operator Riccati equation
\begin{align}\label{ric_intr}
 \mathcal{A}^* {\Pi} + {\Pi}
\mathcal{A} - {\Pi} {\mathcal{B}}
\mathcal{B}^* {\Pi}   + \mathcal{M}  = 0.
\end{align}
via  $u=-\mathcal{B}^*\Pi y.$ For the associated nonlinear
closed-loop system, we show that for $\|y_0\| <
\varepsilon,$ the system converges to zero with an exponential rate.

As an alternative, we investigate a nonlinear feedback law based on the
solution $\Upsilon$ to an operator Lyapunov equation
\begin{align} \label{eq:lyapunov_intr}
  \mathcal{A}^* \Upsilon  + \Upsilon \mathcal{A} + 2 \mu I = 0,
\end{align}
for an appropriately chosen parameter $\mu >0.$ Though the control
will not be obtained from an optimal control problem, it will be shown to
yield a globally, exponentially stabilizing feedback law.

The boundary conditions that we have chosen (for the state equation and for $\alpha$) ensure a mass conservation property. Therefore, the control, which acts inside a differential operator, does not affect the dynamics on a subspace of the state space.
As a consequence, we actually have to work with a formulation of \eqref{eq:aux_intro_1} on the subspace of elements having zero mean and equations \eqref{ric_intr} and \eqref{eq:lyapunov_intr} have to be adapted accordingly.
Another
important aspect is the choice of the control potential $\alpha$ within $V$, see
\eqref{eq:FPE_potential}. Our choice is  guided by a criterium formulated in
the infinite dimensional version of the Hautus criterion.

Besides the large number of publications  which consider the Fokker-Planck
equations primarily from the stochastic point we mention  \cite{LeB08} which
gives an analytical framework for Fokker-Planck equations with irregular
coefficients, a semigroup approach for Kolmogorov operators with applications
to the Fokker-Planck equations \cite{Bog09}, and a detailed functions space
analysis of steady state solutions in \cite{Hua15}. Concerning stabilization of infinite dimensional systems by means of linearization techniques and the use of Riccati equations to devise feedback mechanisms we refer to e.g. \cite{BT14, RayT10, TheBR10}. Bilinear control systems  arise in the context of parameter estimation problems, for example, and in the control of quantum mechanical equations. Concerning controlability of such systems we refer to the monograph \cite{Kha10}, and the references given there.

The construction of  suboptimal feedback laws on the basis of applying linear
quadratic regulator theory to  conveniently defined linearizations
has many predecessors. In the context of distributed parameter systems we refer
to e.g. \cite{BarLT06,RayT10,Ray06}.
In all these papers the control enters linearly into the control system, while
it appears in a bilinear fashion in our problem  \eqref{eq:FPE} with the
control entering in the potential $V$ specified in  \eqref{eq:FPE_potential}.
We also stress that   the
control acts on the differential operator,
more precisely on the convection term if \eqref{eq:FPE} is considered as a
diffusion-convection equation. Hence our problem does not
belong to  the class of bilinear control problems which was investigated in
\cite{BalS79} where the control operator multiplies a bounded term in the state
equation.

A brief description of the contents of  the paper is given next. Section 2 is
devoted to establishing well-posedness of the state equation. We provide the
functions space setting in a form which is required for our results on
stabilization and as basis for the numerical treatment. Section 3 summarizes
some properties of the Fokker-Planck operator with reflecting boundary
conditions  and provides a succinct splitting of the state equation with respect
to the ground state and its complement. A Riccati-based stabilizing feedback
mechanism together with an appropriate choice for the control potential is
investigated in Section 4. Section  5 provides an alternative which is based on
a Lyapunov technique. Loosely speaking,  the Riccati-based approach is local
and allows an arbitrary decay rate, while the Lyapunov technique is global but
it only effects the first eigenspace different from the ground state.  Section
6 describes a numerical approach and provides examples which illustrate the
theoretical results.

\section{Well-posedness} In this short section we establish basic well-posedness
properties of the state equation \eqref{eq:FPE}. For  arbitrary $T>0$ we shall
refer to $\rho$ as (variational) solution of \eqref{eq:FPE} on $(0,T)$ if
 $$\rho \in W(0,T)=L^2(0,T;H^1(\Omega))\cap H^1(0,T;(H^1(\Omega))^*)$$
 and for a.e. $ t \in (0,T)$
\begin{equation}
\label{eq:K1}
\begin{aligned}
&\langle \rho_t(t),v\rangle+\langle \nu \nabla \rho(t)+\rho(t) \nabla G,\nabla v
\rangle + u(t) \langle\rho(t) \nabla \alpha,\nabla v \rangle=0 \;\text{for all}
\; v \in H^1(\Omega)\\[1.5ex]
&\rho(0)=\rho_0.
\end{aligned}
\end{equation}
Above $(H^1(\Omega))^*$ denotes the topological dual  of
$H^1(\Omega)$, with respect to $L^2(\Omega)$ as pivot space.  Let us recall
that $W(0,T) \subset C([0,T],L^2(\Omega))$, see e.g. \cite[Theorem 11.4]{Chi00}
, so that $\rho (0)$ is well defined. We also repeat the standing assumption
that $G$ and $\alpha$ are elements of  $W^{1,\infty}(\Omega)\cap W^{2,\max(2,n)}
(\Omega)$,
which in particular implies that the Neumann trace of $\alpha$ is well-defined.
These assumptions will be used in the following basic
well-posedness result on the state equation \eqref{eq:FPE}.
\begin{proposition}
	For every $u\in L^2(0,T)$ and $\rho_0 \in L^2(\Omega)$ there exists a
unique solution to (1.1).  If moreover $\rho_0 \in H^1(\Omega)$ and
$\Delta\alpha \in L^\infty(\Omega)$, then $\rho_t \in L^2 (0,T;L^2(\Omega)),\,
\rho \in C([0,T];H^1(\Omega)) $, $\nabla \cdot(\nu\Delta\rho +\rho\nabla
G),\Delta \rho\in
L^2(0,T;L^2(\Omega))$ and $(\nu\nabla\rho +\rho\nabla G)\cdot \vec n =0$ in
$L^2(0,T;H^{-1/2}(\Gamma))$.
\end{proposition}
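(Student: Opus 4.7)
I would treat \eqref{eq:K1} as a linear non-autonomous parabolic equation with a time-dependent bilinear form, apply the classical Lions variational framework to establish the first statement, and then bootstrap to the higher regularity of the second statement via an $H^{2}$-type energy estimate on a Galerkin approximation.

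\textbf{Part 1.} On $H^{1}(\Omega)\times H^{1}(\Omega)$, introduce
$a(t;\rho,v) := \nu\langle\nabla\rho,\nabla v\rangle + \langle\rho\nabla G + u(t)\rho\nabla\alpha,\nabla v\rangle.$
Since $G,\alpha\in W^{1,\infty}(\Omega)$, the form is continuous in $(\rho,v)$ with constant $\nu+\|\nabla G\|_{\infty}+|u(t)|\,\|\nabla\alpha\|_{\infty}$, which belongs to $L^{2}(0,T)$ because $u\in L^{2}(0,T)$. The drift terms $\langle\rho\nabla G,\nabla\rho\rangle$ and $u(t)\langle\rho\nabla\alpha,\nabla\rho\rangle$ split via Young's inequality so as to absorb half of $\nu\|\nabla\rho\|^{2}$, producing a Gårding estimate
$a(t;\rho,\rho)+\lambda(t)\|\rho\|_{L^{2}}^{2}\ge \tfrac{\nu}{2}\|\rho\|_{H^{1}}^{2},\ \ \lambda(t) = C(1+|u(t)|^{2})\in L^{1}(0,T).$
Lions' theorem for parabolic equations with measurable time-dependent bilinear form then yields a unique $\rho\in W(0,T)$ satisfying \eqref{eq:K1}; uniqueness follows independently from Gronwall applied to $\tfrac{d}{dt}\|\rho\|_{L^{2}}^{2}$.

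\textbf{Part 2.} Assume now $\rho_{0}\in H^{1}(\Omega)$ and $\Delta\alpha\in L^{\infty}(\Omega)$. Approximate $\rho$ by the Galerkin scheme $\rho_{m}$ built on the eigenbasis $\{e_{k}\}$ of the Neumann Laplacian; since $\partial_{\vec{n}}e_{k}=0$, the finite-dimensional Galerkin subspaces are invariant under $-\Delta$, so that $v = -\Delta\rho_{m}$ is an admissible test function. The orthonormality identities $\langle\rho^{m}_{t},-\Delta\rho_{m}\rangle = \tfrac{1}{2}\tfrac{d}{dt}\|\nabla\rho_{m}\|^{2}$ and $\nu\langle\nabla\rho_{m},\nabla(-\Delta\rho_{m})\rangle = \nu\|\Delta\rho_{m}\|^{2}$ provide the dissipative term. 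Expanding $\nabla\cdot(\rho\nabla G)=\rho\Delta G+\nabla\rho\cdot\nabla G$ and $\nabla\cdot(\rho\nabla\alpha)=\rho\Delta\alpha+\nabla\rho\cdot\nabla\alpha$, the remaining drift contributions pair with $\Delta\rho_{m}$ in $L^{2}$ and, using $\nabla G,\nabla\alpha,\Delta\alpha\in L^{\infty}$, $\Delta G\in L^{\max(2,n)}$, the Sobolev embedding $H^{1}(\Omega)\hookrightarrow L^{p}$ for $p$ adapted to $n$, and Young's inequality, are absorbed into $\tfrac{\nu}{2}\|\Delta\rho_{m}\|^{2}$ plus lower-order $(1+|u(t)|^{2})\|\nabla\rho_{m}\|^{2}$ terms. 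Gronwall then delivers uniform bounds
$\|\rho_{m}\|_{L^{\infty}(0,T;H^{1})}+\|\rho_{m}\|_{L^{2}(0,T;H^{2})}+\|\rho^{m}_{t}\|_{L^{2}(0,T;L^{2})}\le C,$
and passing to the limit yields $\rho\in L^{2}(0,T;H^{2})\cap H^{1}(0,T;L^{2})$, hence $\rho\in C([0,T];H^{1}(\Omega))$ by Lions-Magenes interpolation. The boundary identity in $L^{2}(0,T;H^{-1/2}(\Gamma))$ follows from the $H(\mathrm{div})$-trace theorem applied to $\nu\nabla\rho+\rho\nabla G\in L^{2}(0,T;L^{2}(\Omega)^{n})$, whose distributional divergence equals $\rho_{t}-u\nabla\cdot(\rho\nabla\alpha)\in L^{2}(0,T;L^{2}(\Omega))$; the $u\rho\nabla\alpha\cdot\vec{n}$ piece drops thanks to \eqref{eq:shape_boundary_cond}.

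\textbf{Main obstacle.} The delicate point in Part~2 is the boundary term $\int_{\Gamma}\rho_{m}(\nabla G\cdot\vec{n})(-\Delta\rho_{m})\,\mathrm{d}\sigma$ that arises when $\langle\rho_{m}\nabla G,\nabla(-\Delta\rho_{m})\rangle$ is integrated by parts, because $\nabla G\cdot\vec{n}$ need not vanish on $\Gamma$. A clean workaround is the weighted change of variables $w := \rho/\rho_{\infty}$ with $\rho_{\infty}\propto e^{-G/\nu}$: the flux factorizes as $\nu\nabla\rho+\rho\nabla G = \nu\rho_{\infty}\nabla w$, the reflective condition reduces to homogeneous Neumann on $w$ (using $\nabla\alpha\cdot\vec{n}=0$), and the leading operator becomes the self-adjoint weighted Laplacian $\tfrac{1}{\rho_{\infty}}\nabla\cdot(\rho_{\infty}\nabla w)$, for which the $L^{2}(H^{2})\cap H^{1}(L^{2})$ estimate sketched above becomes completely standard and the boundary contribution disappears.
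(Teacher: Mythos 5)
Your proposal is correct, and its decisive ingredient coincides with the paper's, though you deploy it differently. For Part 1 the paper simply runs the Galerkin scheme with the test functions $\rho$ and a duality estimate for $\rho_t$; your Lions/G\aa rding packaging is the same computation (note only that the continuity constant of your form is merely in $L^2(0,T)$, so the off-the-shelf version of Lions' theorem with a uniform bound does not literally apply and you must in any case fall back on the Galerkin a priori estimates, i.e.\ on what the paper does). For Part 2, your primary route --- testing with $-\Delta\rho_m$ on the Neumann eigenbasis --- does break down exactly where you say it does: the boundary integral $\int_\Gamma \rho_m(\nabla G\cdot\vec n)\Delta\rho_m\,\mathrm{d}\sigma$ cannot be controlled, so the weighted substitution is not an optional refinement but the essential step. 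Once you write $w=\rho/\rho_\infty$ and use $\nu\nabla\rho+\rho\nabla G=\nu\rho_\infty\nabla w$, you have reproduced the paper's central identity $\nu\nabla\rho+\rho\nabla G=e^{-G/\nu}\nabla(\nu e^{G/\nu}\rho)$; the difference is only which energy you then run. The paper tests \eqref{eq:K1} with $v=e^{G/\nu}\rho_t$ (i.e.\ with $w_t$ in your notation), which yields $\rho_t\in L^2(0,T;L^2(\Omega))$ and $\rho\in C([0,T];H^1(\Omega))$ directly via Gronwall, and only afterwards recovers $\nabla\cdot(\nu\nabla\rho+\rho\nabla G)\in L^2(0,T;L^2(\Omega))$ and $\Delta\rho\in L^2(0,T;L^2(\Omega))$ from the equation itself, using the Sobolev/H\"older argument for $\rho\Delta G$ (this is why only $G\in W^{2,\max(2,n)}(\Omega)$ is assumed). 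You instead test with the (weighted) Laplacian of $w$, obtaining $L^\infty(H^1)\cap L^2(H^2)$ first and $\rho_t\in L^2(L^2)$ and $C([0,T];H^1)$ afterwards by interpolation. Both orders of attack are standard and deliver the same conclusions; your route has the small advantage that in the $w$-variables the second-order coefficient $\Delta G$ never enters the main estimate (it reappears only when converting $\Delta w$ back to $\Delta\rho$, where you need the same embedding argument as the paper), while the paper's route avoids any discussion of admissibility of $-\Delta$ as a Galerkin test function. The treatment of the flux trace via the $H(\mathrm{div})$ trace theorem is identical in both.
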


\begin{proof}
	The claim can be verified by a standard Galerkin approximation technique
and we therefore only give the necessary a-priori estimates. Taking the inner
products with $\rho(t)$ in \eqref{eq:K1} we obtain
	\begin{equation*}
	\frac{1}{2}\frac{\mathrm{d}}{\mathrm{d}t}|\rho(t)|^2+\nu
|\nabla\rho(t)|^2 \leq  (|\nabla
G|_{L^\infty}+|u(t)||\nabla \alpha|_{L^\infty})|\rho(t)||\nabla\rho(t)|
	\end{equation*}
	and hence
	\begin{equation}
	\label{eq:K2}
\begin{aligned}
	\frac{\mathrm{d}}{\mathrm{d}t}|\rho(t)|^2+\nu|\nabla\rho(t)|^2 \leq
\frac{1}{\nu}(|\nabla
G|^2_{L^\infty}+|u(t)|^2|\nabla\alpha|^2_{L^\infty})|\rho(t)|^2.
	\end{aligned}
\end{equation}
By Gronwall's lemma we have for every $t\geq 0$
	$$
	|\rho(t)|^2\leq |\rho_0|^2  \exp (\, \frac{1}{\nu}\, \int_{0}^{t}(|\nabla
G|^2_{L^\infty}+|u(s)|^2|\nabla\alpha|^2_{L^\infty}) \, \mathrm{d}s\, ).
 	$$
 	Together with \eqref{eq:K2} this implies that $\rho \in L^2(0,T;
H^1(\Omega)) \cap L^\infty(0,T,L^2(\Omega))$. To verify that $ \rho_t\in
L^2(0,T; (H^1(\Omega))^*)$ we recall that
\begin{equation*}
|\phi|_{(H^1(\Omega))^*} =
\sup_{|\psi|_{H^1(\Omega)\le 1}} \langle \phi, \psi
\rangle_{(H^1(\Omega))^*,H^1(\Omega)},
\end{equation*}
for any $\phi \in (H^1(\Omega))^*$.  We
obtain the existence of a constant $C$ independent of $\rho, G$, and $\alpha$
such that
 \begin{equation*}
 \int_0^T |\rho_t(t)|_{H^1(\Omega)^*} \le C\, \int_0^T (|\nabla \rho(t)|^2 +
|\nabla G|_{L^\infty}^2 |\rho(t)|^2 +  |\nabla \alpha|_{L^\infty}^2
|\rho|^2_{L^\infty(0,T;L^2(\Omega))} |u(t)|^2  ) \, dt.
 \end{equation*}
 Since the right hand side is bounded we have that  $\rho_t \in
L^2(0,T;H^1(\Omega))^*$, and thus $\rho \in W(0,T)$.
 \\
 	
To gain extra regularity we set $v=e^{G/\nu}\rho_t$ in
\eqref{eq:K1} and obtain, using $\nabla \alpha \cdot  \vec{n}=0$ on $\Gamma$,
that
 	$$
 	|e^{G/2\nu}\rho_t|^2+\nu \langle
e^{-G/\nu}\nabla(e^{G/\nu}\rho),\nabla(e^{G/\nu}\rho_t)\rangle+ u(t)\langle
\rho(t)\nabla\alpha,\nabla(e^{G/\nu}\rho_t(t))\rangle=0
 	$$
 	and thus
 	$$
 	\begin{aligned}
|e^{G/2\nu}\rho_t|^2+\frac{\nu}{2}\frac{\mathrm{d}}{\mathrm{d}t}|e^{-G/2\nu}
\nabla(e^{ G/\nu}\rho)|^2 &\leq |u(t)|
\langle\nabla(\rho(t)\nabla_\alpha),e^{G/\nu}\rho_t\rangle\\
 	& \leq |u(t)||e^{G/2\nu}\nabla(\rho(t)\nabla\alpha)||e^{G/2\nu}\rho_t|.
 	\end{aligned}
 	$$
  	This implies the estimate
 	$$
 	\begin{aligned}
|e^{G/2\nu}\rho(t)_t|^2+\nu\frac{\mathrm{d}}{\mathrm{d}t}|e^{-G/2\nu}\nabla(e^{
G/\nu} \rho(t))|^2 &\leq |u(t)|^2|e^{+G/2\nu}(\rho(t)\Delta\alpha+
\nabla\rho(t)\cdot\nabla\alpha)|^2\\
 	&\leq
2|u(t)|^2(|e^{+G/2\nu}\rho(t)\Delta\alpha|^2+|e^{+G/2\nu}
\nabla\rho(t)\cdot\nabla\alpha|^2)\\
 	\leq 2|u(t)|^2(|e^{G/2\nu}\Delta
\alpha|^2_{L^\infty}|\rho(t)|^2&+2|\nabla\alpha|^2_{L^\infty}|e^{G/2\nu}
(\nabla\rho(t)+\frac{1}{\nu}\rho(t)\nabla G)|^2\\ 	
&+\frac{2}{\nu}|e^{G/2\nu}|^2_{L^\infty}|\nabla\alpha|^2_{L^\infty}|\nabla
G|^2_{L^\infty}|\rho(t)|^2).
 	\end{aligned}
 	$$
 	With $K_1=2|e^{G/2\nu}|^2
_{L^\infty}(|\Delta\alpha|^2_{L^\infty}+\frac{2}{\nu}|\nabla
\alpha|^2_{L^\infty}+|\nabla G|^2_{L^\infty})$ and
$K_2=4|\nabla\alpha|^2_{L^\infty}$ we have
 	$$
 	|e^{G/2\nu}\rho_t(t)|^2+\nu
\frac{\mathrm{d}}{\mathrm{d}t}|e^{-G/2\nu}\nabla(e^{G/\nu}\rho(t))|^2
 	\leq
K_1|u(t)|^2|\rho(t)|^2+K_2|u(t)|^2|e^{-G/2\nu}\nabla(e^{G/\nu}\rho(t))|^2.
 	$$
 	Integration on $(0,t)$, with $t\in(0,T]$ implies that
 	\begin{equation}
 	\label{eq:K3}
 	\begin{aligned}
&\nu|e^{-G/2\nu}\nabla(e^{G/\nu}\rho(t))|^2+\int_{0}^{t}|e^{G/2\nu}\rho_t(s)|^2\
,\mathrm{d}s\\ &\leq \nu|e^{-G/2\nu}\nabla(e^{G/\nu}\rho_0)|^2 +
K_1|\rho|^2_{C([0,T],L^2(\Omega))}|u|^2_{L^2(0,T)} \\
 	&+K_2\int_{0}^{t}|u(t)|^2
|e^{-G/2\nu}\nabla(e^{G/\nu}\rho(s))|^2\,\mathrm{d}s.
 	\end{aligned}
\end{equation}

Neglecting for a moment the second term on the left hand side of the
inequality and applying Gronwall's inequality implies that
$e^{-G/\nu}\nabla(e^{G/\nu}\rho)\in C([0,T],L^2(\Omega)^n)$ and hence $\rho \in
C([0,T],H^1(\Omega))$. Using this fact in \eqref{eq:K3} implies that
$\rho_t\in L^2(0,T;L^2(\Omega))$ and hence
\begin{equation*}
\nabla\cdot(\nu \nabla\rho
+\rho\nabla G) \in L^2(0,T;L^2(\Omega))
\end{equation*}
from \eqref{eq:K1}.  Thus $\nu
\nabla\rho +\rho\nabla G \in L^2(0,T; L^2_{\mathrm{div}}(\Omega))$,  and again
by
\eqref{eq:K1} we have $(\nu\nabla\rho +\rho\nabla G)\cdot \vec n =0$ in
$L^2(0,T;H^{-1/2}(\Gamma))$, see e.g. \cite[p.101]{Tar07} as desired.
Here
 $L^2_{\mathrm{div}}(\Omega))$ denotes the space $\{\vec \varphi \in
L^2(\Omega)^n\colon \nabla \cdot \,\vec \varphi\in L^2(\Omega) \}$.
The
properties that
 $\nabla\cdot (\nu \nabla\rho
+\rho\nabla G) \in L^2(0,T;L^2(\Omega))$ and $\rho\in L^2(0,T;H^1(\Omega))$ can
be exploited to obtain that  $\Delta\rho\in L^2(0,T;L^2(\Omega))$. Since
$\nabla\cdot(\rho \nabla G) = \nabla \rho \cdot \nabla G + \rho \Delta G$ and $
\nabla \rho \cdot \nabla G \in L^2(0,T; L^2(\Omega)) $, by the fact that
$\nabla G \in L^\infty (\Omega)$, it suffices to argue that $\rho \Delta G \in
L^2(0,T;L^2(\Omega))$. This follows from the continuous embedding of
$H^1(\Omega)$ into $L^{(\frac{2n}{n-2})}(\Omega)$ and the H\"older inequality
with weights $p=\frac{n}{n-2}, p'=\frac{n}{2}$.
\end{proof}

The solution of the Fokker-Planck equation satisfies  structural properties
including preservation of probability and nonnegativity which we establish next.

\begin{proposition}\label{lem:positivity}Let  $u\in L^2(0,T)$ and $\rho_0 \in
L^2(\Omega)$.

  (i) For every  $t\in [0,T]$ we have $\int_{\Omega}^{}\rho (t) \,\mathrm{d}x =
\int_{\Omega_0}^{} \rho_0\, \mathrm{d}x$.

  (ii)  If $ \rho_0 \ge 0$ a.e. on $\Omega$, then  $\rho(x,t) \ge 0$ for
all $t > 0$ and almost all $x \in \Omega.$
\end{proposition}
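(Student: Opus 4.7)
The natural choice is to test the variational identity \eqref{eq:K1} with the constant function $v \equiv 1 \in H^1(\Omega)$. Because $\nabla 1 = 0$, both the diffusion and the convection contributions vanish identically, leaving $\langle \rho_t(t), 1\rangle_{(H^1(\Omega))^*, H^1(\Omega)} = 0$ for a.e.\ $t \in (0,T)$. For $\rho \in W(0,T)$ this quantity coincides with the distributional derivative of $t \mapsto \int_\Omega \rho(t)\,\mathrm{d}x$; combined with the embedding $W(0,T) \hookrightarrow C([0,T], L^2(\Omega))$ recalled in the previous proof, this yields $\int_\Omega \rho(t)\,\mathrm{d}x = \int_\Omega \rho_0\,\mathrm{d}x$ for every $t \in [0,T]$.

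\textbf{Part (ii).} The plan is to test \eqref{eq:K1} with the negative part $v(t) := \rho(t)^- = \max(-\rho(t),0)$, which lies in $H^1(\Omega)$ for a.e.\ $t$ by Stampacchia's truncation lemma, with $\nabla \rho^- = -\nabla \rho\,\mathbf{1}_{\{\rho < 0\}}$. Straightforward pointwise computations give
\[
\nu\,\langle \nabla \rho, \nabla \rho^-\rangle = -\nu\,|\nabla \rho^-|^2, \qquad \langle \rho \nabla V, \nabla \rho^-\rangle = -\langle \rho^- \nabla V, \nabla \rho^-\rangle,
\]
where $V = G + u(t)\alpha$, while the chain rule for $W(0,T)$-functions produces $\langle \rho_t, \rho^-\rangle = -\tfrac{1}{2}\tfrac{\mathrm{d}}{\mathrm{d}t}|\rho^-|^2$. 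Substituting back into \eqref{eq:K1} leaves
\[
\tfrac{1}{2}\tfrac{\mathrm{d}}{\mathrm{d}t}|\rho^-(t)|^2 + \nu\,|\nabla \rho^-(t)|^2 = -\langle \rho^-(t)\nabla V, \nabla \rho^-(t)\rangle,
\]
and Cauchy--Schwarz followed by Young's inequality absorb the gradient term into the left-hand side, yielding $\tfrac{\mathrm{d}}{\mathrm{d}t}|\rho^-|^2 \leq C(1+|u(t)|^2)\,|\rho^-|^2$ for a constant $C$ depending on $\nu$, $|\nabla G|_{L^\infty}$, and $|\nabla \alpha|_{L^\infty}$. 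Since $\rho_0 \geq 0$ forces $\rho^-(0) = 0$, Gronwall's lemma (applicable because $u \in L^2(0,T)$) gives $\rho^-(t) \equiv 0$ on $[0,T]$, equivalently $\rho(t,x) \geq 0$ for a.e.\ $x \in \Omega$.

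The main technical hurdle I anticipate is justifying the identity $\langle \rho_t, \rho^-\rangle = -\tfrac{1}{2}\tfrac{\mathrm{d}}{\mathrm{d}t}|\rho^-|^2$ for an element of $W(0,T)$: one has to confirm that the truncation $\rho^-$ inherits enough time regularity to be a legitimate test function and that the formal chain rule survives the non-smoothness of $s \mapsto s^-$ at the origin. A standard remedy is to approximate by smooth convex truncations such as $\varphi_\varepsilon(s) = \sqrt{(s^-)^2 + \varepsilon^2}-\varepsilon$, apply the smooth chain rule available on $W(0,T)$, and pass to the limit $\varepsilon \to 0$; alternatively one may appeal directly to the classical differentiation formula for truncations of $W(0,T)$-elements.
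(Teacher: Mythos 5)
Both parts are correct. Part (i) is identical to the paper's argument (test \eqref{eq:K1} with $v\equiv 1$). For part (ii) you take the same overall route — energy estimate on the negative part via the truncation chain rule, absorption of the gradient term by Young's inequality, and Gronwall — but with a different test function: you use the plain negative part $\rho^-$, whereas the paper tests with the weighted function $v=-e^{G/\nu}\rho^-$, exploiting the rewriting $\nu\nabla\rho+\rho\nabla G=e^{-G/\nu}\nabla(\nu e^{G/\nu}\rho)$. The weight makes the $G$-drift contribution combine exactly with the diffusion into the single nonpositive term $-\nu\|e^{-G/2\nu}\nabla(e^{G/\nu}\rho^-)\|^2$, so only the control term $u\langle\rho\nabla\alpha,\cdot\rangle$ has to be estimated; your unweighted version instead handles the $G$-drift by Cauchy--Schwarz and Young, which is harmless here since $\nabla G\in L^\infty(\Omega)$, at the cost of a slightly larger Gronwall constant $C(1+|u(t)|^2)$ in place of $C|u(t)|^2$. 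Your flagged technical point — justifying $\langle\rho_t,\rho^-\rangle=-\tfrac12\tfrac{\mathrm{d}}{\mathrm{d}t}|\rho^-|^2$ for $\rho\in W(0,T)$ — is exactly what the paper settles by citing the truncation lemma of Chipot (Lemma 11.2 of \cite{Chi00}), so either of your proposed remedies closes that gap.
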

\begin{proof}
Setting $v=1$ in  \eqref{eq:K1} we obtain the preservation of probability
$\int_{\Omega}^{}\rho(t)\, \mathrm{d}x =\int_{\Omega}^{}\rho_0 \, \mathrm{d}x $
for all
$t\in[0,T]$.

Turning to the verification of (ii) let us denote by $\rho = \rho^+ - \rho^-$
the decomposition of the state $\rho$
into its nonnegative and its negative part, respectively. It then also holds
that
\begin{align*}
  e^{\frac{G}{\nu}} \rho = \left(e^{\frac{G}{\nu}}\rho \right)^+ -
\left(e^{\frac{G}{\nu}} \rho \right)^- = e^{\frac{G}{\nu}}(\rho^+ - \rho^-),
\end{align*}
and $\rho^+,\rho^- \in L^2(0,T;H^1(\Omega))$, see e.g. \cite[Lemma 11.2]{Chi00}
Note that we can write $\nu \nabla \rho + \rho \nabla G = e^{-\frac{G}{\nu}}
\nabla (\nu e^{\frac{G}{\nu}} \rho)$.
Hence by setting $ v= -e^{\frac{G}{\nu}} \rho^-$  in  \eqref {eq:K1} we  obtain
that
\begin{align*}
 \left \langle \frac{\mathrm{d}}{\mathrm{d}t} (\rho^+ - \rho^-), -
e^{\frac{G}{\nu}} \rho^- \right\rangle
= - \nu \left\langle e^{-\frac{G}{\nu}} \nabla (e^{\frac{G}{\nu}} \rho),\nabla
(e^{\frac{G}{\nu}}\rho^- ) \right\rangle+ u  \left\langle \rho \nabla \alpha,
\nabla (e^{\frac{G}{\nu}} \rho^- ) \right\rangle.
\end{align*}
With \cite[Lemma 11.2]{Chi00} it now follows that
\begin{align*}
  \frac{1}{2} \frac{\mathrm{d}}{\mathrm{d}t } \| e^{\frac{G}{2\nu}} \rho^- \|^2
&= \left \langle \frac{\mathrm{d}}{\mathrm{d}t} (e^{\frac{G}{2\nu}}\rho^-),
e^{\frac{G}{2\nu}} \rho^- \right \rangle  \\
&\le - \nu \| e^{-\frac{G}{2\nu}}
\nabla (e^{\frac{G}{\nu}} \rho^- ) \|^2 + |u| \| e^{\frac{G}{2\nu}} \rho^-
\nabla \alpha \| \| e^{-\frac{G}{2\nu}} \nabla (e^{\frac{G}{\nu}}\rho^ -) \| \\
&\le - \nu \| e^{-\frac{G}{2\nu}}
\nabla (e^{\frac{G}{\nu}} \rho^- ) \|^2 + \frac{\nu}{2}\| e^{-\frac{G}{2\nu}}
\nabla (e^{\frac{G}{\nu}}\rho^ -) \|^2  \\
&\qquad +  \frac{1}{2\nu} |u|^2  \| \nabla
\alpha \|^2_{L^{\infty}(\Omega)} \| e^{\frac{G}{2\nu}} \rho^-
 \| ^2  \\
 &\le \frac{1}{2\nu} |u|^2  \| \nabla
\alpha \|^2_{L^{\infty}(\Omega)} \| e^{\frac{G}{2\nu}} \rho^-
 \| ^2
\end{align*}
An application of Gronwall's inequality now yields that from $\rho^-(0)=0,$ it
follows that $e^{\frac{G}{2\nu}} \rho^-(t) =0 $,  and hence that
$\rho^-(t)=0$ for all $t\ge 0.$

\end{proof}

\section{The operator form of the Fokker-Planck equation}

The goal of this section is to formulate \eqref{eq:FPE} as an
abstract Cauchy problem such that the linearized system can be studied by means
of semigroup methods. Hence, let us consider the abstract \textit{bilinear}
control system
\begin{equation}\label{eq:abs_pur_bil}
 \begin{aligned}
  \dot{\rho}(t) &= \mathcal{A} \rho(t) + \mathcal{N}\rho(t)
u(t), \\
\rho(0) &= \rho_0,
\end{aligned}
\end{equation}
where the operators $\mathcal{A}$ and $\mathcal{N}$ are defined as follows
\begin{equation}\label{eq:A_N_op}
\begin{aligned}
  \mathcal{A}\colon \mathcal{D}(\mathcal{A})&\subset L^2(\Omega) \to
L^2(\Omega),\\
\mathcal{D}(\mathcal{A})&= \left\{\rho \in H^2(\Omega) \left| (\nu \nabla \rho +
\rho \nabla G) \cdot \vec{n}  =0 \text{ on } \Gamma \right. \right\}, \\
\mathcal{A}\rho& = \nu \Delta \rho + \nabla \cdot (\rho \nabla G), \\[1ex]
\mathcal{N}\colon H^1(\Omega)& \to L^2(\Omega),\ \  \mathcal{N}\rho =
\nabla \cdot (\rho \nabla \alpha).
\end{aligned}
\end{equation}
Let us recall \cite{Ada75} that we have the following embeddings
\begin{align*}
  W^{2,2}(\Omega) \hookrightarrow  \begin{cases} C(\Omega) &\mbox{if } n
=1,2,3, \\
L^q(\Omega), \ q \in [1,\infty)  \ \ & \mbox{if } n =4, \\
L^{\frac{2n}{n-4}}(\Omega)  \ & \mbox{if } n\ge 5. \end{cases}
\end{align*}
Since by assumption $\alpha,G \in
W^{1,\infty}(\Omega)\cap W^{2,\max(2,n)}(\Omega),$
a short computation involving the H\"older inequality shows that $\mathcal{A}$ and
$\mathcal{N}$ are well-defined. Its $L^2(\Omega)$-adjoints are now given
by
\begin{equation}\label{eq:A_N_adj_op}
\begin{aligned}
  \mathcal{A}^*\colon \mathcal{D}(\mathcal{A}^*)&\subset L^2(\Omega) \to
L^2(\Omega),\\
\mathcal{D}(\mathcal{A}^*)&= \left\{\varphi \in H^2(\Omega) \left| (\nu \nabla
\varphi ) \cdot \vec{n}  =0 \text{ on } \Gamma \right. \right\}, \\
\mathcal{A}^*\varphi & = \nu \Delta \varphi - \nabla G \cdot \nabla \varphi,
\\[1ex]
\mathcal{N}^*\colon H^1(\Omega) &\to
L^2(\Omega),\ \ \mathcal{N}^*\varphi = -\nabla \varphi \cdot \nabla \alpha.
\end{aligned}
\end{equation}

We emphasize that, due to \eqref{eq:shape_boundary_cond}, a solution $\rho
\in \mathcal{D}(\mathcal{A})$  of \eqref{eq:abs_pur_bil} automatically satisfies
the zero flux boundary conditions of \eqref{eq:FPE}.

\subsection{Properties of the Fokker-Planck operator}

For what follows, it will be convenient to summarize some known
qualitative properties of the uncontrolled Fokker-Planck equation
\begin{equation}\label{eq:abs_lin_unc}
 \begin{aligned}
  \dot{\rho}(t) &= \mathcal{A} \rho(t), \quad
\rho(0) = \rho_0,
\end{aligned}
\end{equation}
compare \cite[Chapter 5/6]{Ris96}. For the sake of a
self-contained presentation, we also provide the proofs for the statements.

Following \cite{Ris96}, let us introduce
$\Phi(x) = \log{\nu} + \frac{G(x)}{\nu},$ such that
$e^{\frac{\Phi(x)}{2}}=\sqrt{\nu}
e^{\frac{G(x)}{2\nu}}.$ Further, define the  operator
\begin{equation}\label{eq:shift_op}
  \begin{aligned}
    \mathcal{A}_s&\colon \mathcal{D}(\mathcal{A}_s) \subset L^2(\Omega)
\to
L^2(\Omega), \\
  \mathcal{D}(\mathcal{A}_s)&= \left\{ \varrho \in H^2(\Omega) \left| (\nu
\nabla \varrho + \frac{1}{2} \varrho \nabla G )\cdot \vec{n}=0 \text{ on
} \Gamma \right. \right\}, \\
\mathcal{A}_s&= e^{\frac{\Phi}{2}} \mathcal{A} e^{-\frac{\Phi}{2}}.
  \end{aligned}
\end{equation}
A straightforward calculation using $\nu \nabla
\Phi = \nabla G$ shows that
$$\mathcal{A}(e^{-\frac{\Phi}{2}}\rho) = \nu e^{-\frac{\Phi}{2}}
\left(\Delta \rho + \frac{1}{2}\rho \Delta \Phi - \frac{1}{4} \rho \nabla \Phi
\cdot \nabla \Phi \right).$$
Using the previously mentioned embeddings and H\"older inequality, it can be
shown that $\mathcal{A}_s\rho = e^{\frac{\Phi}{2}} \mathcal{A}
e^{-\frac{\Phi}{2}} \rho$ is indeed in $L^2(\Omega)$ for $\rho \in H^2(\Omega).$
Moreover, it turns out that the spectrum of $\mathcal{A}$ coincides with that of
$\mathcal{A}_s$ and, in particular, is discrete.
\begin{lemma}\label{lem:spec_A}
  The operator $\mathcal{A}_s$ is self-adjoint. The spectrum
$\sigma(\mathcal{A}_s)$ of $\mathcal{A}_s$ consists of pure point
spectrum contained in $\overline{\mathbb R}_-$ with $0\in
\sigma(\mathcal{A}_s)$ and only accumulation point $-\infty.$ The
eigenfunctions $\{\psi _i\}_{i=0}^\infty$ form a complete orthogonal set.
Further
$\sigma( \mathcal{A}_s )= \sigma(\mathcal{A})$ and $\psi_i$ is an
eigenfunction of $\mathcal{A}$ if and only if $e^{\frac{\Phi}{2}} \psi_i$ is
an
eigenfunction of $\mathcal{A}_s.$ Similarly, $\psi_i$ is an eigenfunction of
$\mathcal{A}$ if and only if $e^{\Phi} \psi_i$ is an eigenfunction of
$\mathcal{A}^*.$ Finally, $\rho_\infty= e^{-\Phi}$ is an
eigenfunction of $\mathcal{A}$ associated to the eigenvalue $0.$
\end{lemma}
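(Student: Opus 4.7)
The plan is to prove all spectral properties first for the symmetric operator $\mathcal{A}_s$ and then transfer them to $\mathcal{A}$ and $\mathcal{A}^*$ via similarity. The already-displayed identity $\mathcal{A}(e^{-\Phi/2}\varrho)=\nu e^{-\Phi/2}(\Delta\varrho+\tfrac{1}{2}\varrho\Delta\Phi-\tfrac{1}{4}|\nabla\Phi|^2\varrho)$ shows that $\mathcal{A}_s=\nu\Delta+V$ with bounded real multiplication potential $V=\nu(\tfrac{1}{2}\Delta\Phi-\tfrac{1}{4}|\nabla\Phi|^2)$. First I would check that $\varrho\mapsto e^{-\Phi/2}\varrho$ is a bijection $\mathcal{D}(\mathcal{A}_s)\to\mathcal{D}(\mathcal{A})$: the identity $\nu\nabla(e^{-\Phi/2}\varrho)+(e^{-\Phi/2}\varrho)\nabla G=e^{-\Phi/2}(\nu\nabla\varrho+\tfrac{1}{2}\varrho\nabla G)$ makes the two Robin-type boundary conditions conjugate, so this follows at once.

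Symmetry of $\mathcal{A}_s$ on $\mathcal{D}(\mathcal{A}_s)$ comes from integration by parts: the boundary integral reduces, thanks to the boundary condition, to a symmetric quadratic form in $(\varrho,\varphi)$, and the bulk term is manifestly symmetric. For self-adjointness I would show bijectivity of $\mathcal{A}_s-\lambda I:\mathcal{D}(\mathcal{A}_s)\to L^2(\Omega)$ for $\lambda>0$ large by Lax-Milgram on the associated sesquilinear form on $H^1(\Omega)$, and conclude by elliptic regularity that the solution lies in $H^2(\Omega)$ with the prescribed boundary condition. Non-positivity then follows from the identity (obtained by setting $\tilde\rho=e^{-\Phi/2}\varrho$ and integrating by parts, with boundary terms vanishing on $\mathcal{D}(\mathcal{A})$)
\begin{equation*}
\langle\mathcal{A}_s\varrho,\varrho\rangle=\langle\mathcal{A}\tilde\rho,e^{\Phi}\tilde\rho\rangle=-\frac{1}{\nu}\int_\Omega e^{\Phi}|\nu\nabla\tilde\rho+\tilde\rho\nabla G|^2\,\mathrm{d}x\le 0,
\end{equation*}
where one uses $\nabla(e^{\Phi}\tilde\rho)=\tfrac{e^{\Phi}}{\nu}(\nu\nabla\tilde\rho+\tilde\rho\nabla G)$. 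Since $\mathcal{D}(\mathcal{A}_s)\subset H^2(\Omega)$ embeds compactly into $L^2(\Omega)$, the resolvent is compact and the spectral theorem delivers pure point spectrum accumulating only at $-\infty$ together with a complete orthogonal basis of eigenfunctions.

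The eigenvalue $0$ is realized by $\rho_\infty=e^{-\Phi}$: the assumed regularity $G\in W^{1,\infty}\cap W^{2,\max(2,n)}$ places $\rho_\infty$ in $H^2(\Omega)$, while the direct computation $\nu\nabla\rho_\infty+\rho_\infty\nabla G=-e^{-\Phi}\nabla G+e^{-\Phi}\nabla G=0$ shows $\rho_\infty\in\mathcal{D}(\mathcal{A})$ and $\mathcal{A}\rho_\infty=0$; in particular $\mathcal{A}_s(e^{-\Phi/2})=e^{\Phi/2}\mathcal{A}\rho_\infty=0$, so $0\in\sigma(\mathcal{A}_s)$. The spectral equality $\sigma(\mathcal{A})=\sigma(\mathcal{A}_s)$ and the stated eigenfunction correspondence are then immediate from the domain-bijection established above. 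For the $\mathcal{A}^*$ correspondence I would derive the conjugated divergence-form identities
\begin{equation*}
\mathcal{A}\rho=\nu\nabla\cdot\bigl(e^{-\Phi}\nabla(e^{\Phi}\rho)\bigr),\qquad\mathcal{A}^*\varphi=\nu e^{\Phi}\nabla\cdot\bigl(e^{-\Phi}\nabla\varphi\bigr),
\end{equation*}
from which $\mathcal{A}^*(e^{\Phi}\psi)=e^{\Phi}\mathcal{A}\psi$ is automatic; the boundary condition $\nabla\varphi\cdot\vec{n}=0$ defining $\mathcal{D}(\mathcal{A}^*)$ pulls back under $\varphi=e^{\Phi}\psi$ to $(\nu\nabla\psi+\psi\nabla G)\cdot\vec{n}=0$, i.e.\ $\psi\in\mathcal{D}(\mathcal{A})$, completing the equivalence.

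The main technical point throughout is the careful tracking of boundary conditions through the three multiplicative conjugations by $e^{\pm\Phi/2}$ and $e^{\pm\Phi}$: the operators $\mathcal{A}$, $\mathcal{A}_s$ and $\mathcal{A}^*$ carry three different Robin/Neumann-type boundary conditions, and each similarity must be shown to be a domain bijection rather than a merely formal manipulation. Once this bookkeeping is done, everything else reduces to standard spectral theory for symmetric elliptic operators with compact resolvent on a bounded domain.
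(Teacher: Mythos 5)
Your proposal is correct and takes essentially the same route as the paper: conjugate by $e^{\Phi/2}$ to the symmetric operator $\mathcal{A}_s$, prove non-positivity of the quadratic form and compactness of the resolvent, invoke the spectral theorem, and transfer the eigenfunctions back through the similarity (your Lax--Milgram argument for surjectivity of $\mathcal{A}_s-\lambda I$ is in fact the step the paper leaves implicit when it passes from symmetry of $\mathcal{A}_s$ to self-adjointness). The only slip is the claim that $V=\nu(\tfrac{1}{2}\Delta\Phi-\tfrac{1}{4}|\nabla\Phi|^2)$ is a \emph{bounded} potential: under the standing assumption $G\in W^{1,\infty}(\Omega)\cap W^{2,\max(2,n)}(\Omega)$ one only has $\Delta\Phi\in L^{\max(2,n)}(\Omega)$, which is harmless here since none of your subsequent steps actually uses boundedness of $V$.
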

\begin{proof}
  Let $\varrho_1,\varrho_2 \in \mathcal{D}(\mathcal{A}_s).$ Since $\nu \nabla
\Phi = \nabla G,$ we conclude that
$\mathcal{A}_s\varrho_1 $ is
given as
\begin{equation}\label{eq:aux_spec}
\begin{aligned}
\mathcal{A}_s\varrho_1=  e^{\frac{\Phi}{2}}\mathcal{A}
e^{-\frac{\Phi}{2}}\varrho_1
&= \nu e^{\frac{\Phi}{2}}  \left(\Delta (e^{-\frac{\Phi}{2}}\varrho_1 ) +
\nabla
\cdot ( e^{-\frac{\Phi}{2}} \varrho_1 \nabla \Phi )\right) \\
&= \nu
e^{\frac{\Phi}{2}}  \nabla \cdot \left(\nabla (e^{-\frac{\Phi}{2}}\varrho_1 ) +
 e^{-\frac{\Phi}{2}} \varrho_1 \nabla \Phi \right) \\
 &= \nu
e^{\frac{\Phi}{2}}  \nabla \cdot \left( e^{-\Phi} \nabla ( e^{\frac{
\Phi}{2}} \varrho_1 ) \right).
\end{aligned}
\end{equation}
Similarly we obtain that
\begin{align*}
  0=(\nu \nabla \varrho_1 + \frac{1}{2} \varrho_1 \nabla G )\cdot \vec{n}= (\nu
e^{-\frac{\Phi}{2}} \nabla (e^{\frac{\Phi}{2}} \varrho_1 ))\cdot \vec{n}\ \
\text{on } \Gamma.
\end{align*}
Thus, it holds that
\begin{align*}
  \int_{\Omega} \varrho_2 \mathcal{A}_s\varrho_1\; \mathrm{d}x &=
\int_{\Omega} \varrho_2 \left(
\nu
e^{\frac{\Phi}{2}}  \nabla \cdot ( e^{-\Phi} \nabla ( e^{\frac{
\Phi}{2}} \varrho_1 )  )\right) \; \mathrm{d}x \\
&=  \int_{\Gamma} (\varrho_2 \nu e^{-\frac{\Phi}{2}}
\nabla(e^{\frac{\Phi}{2}} \varrho_1) )\cdot \vec{n}  \;
\mathrm{d}s
- \nu \int_{\Omega} e^{-\Phi} \nabla ( e^{\frac{\Phi}{2}} \varrho_2
) \nabla ( e^{\frac{
\Phi}{2}} \varrho_1 ) \; \mathrm{d}x \\
&= \int_{\Gamma} (\varrho_2 \nu e^{-\frac{\Phi}{2}}
\nabla(e^{\frac{\Phi}{2}} \varrho_1) )\cdot \vec{n}  \;
\mathrm{d}s
- \nu \int_\Gamma \left( \varrho_1 e^{-\frac{\Phi}{2}} \nabla
(e^{\frac{\Phi}{2}}
\varrho_2 ) \right) \cdot \vec{n} \; \mathrm{d}s \\
&\quad + \nu \int_{\Omega} \varrho_1 e^{\frac{\Phi}{2}}  \nabla \cdot
\left(e^{-\Phi} \nabla (e^{\frac{\Phi}{2}} \varrho_2  ) \right)  \; \mathrm{d}x
\\
&= \int_{\Omega} \varrho_1  \mathcal{A}_s \varrho_2 \; \mathrm{d}x.
\end{align*}
As a consequence we have that $\mathcal{A}_s^*= \mathcal{A}_s,$
thus it is a self-adjoint and closed operator in $L^2(\Omega).$ By
\eqref{eq:aux_spec}, we also have that for each $\varrho \in
\mathcal{D}(\mathcal{A}_s)\colon$
\begin{align*}
  \int_{\Omega} \varrho \mathcal{A}_s \varrho \;\mathrm{d}x =   -\nu
\int_{\Omega} |\nabla(e^{\frac{\Phi}{2}}\varrho  ) |^2
e^{-\Phi}\;\mathrm{d}x \le 0,
\end{align*}
and hence $ \mathcal{A}_s$ is a negative operator. It follows that
there exists $\beta \in \mathbb R_+ $ which is in the resolvent set
of $-\mathcal{A}_s.$ Consequently, for each $f\in L^2(\Omega)$
\begin{align*}
  (-\mathcal{A}_s + \beta I) \varrho =f
\end{align*}
has a unique solution $\varrho\in \mathcal{D}(\mathcal{A}_s)$ depending
continuously on $f.$ We observe that $\varrho$ is the solution to
\begin{align*}
 - \nu \Delta \varrho - \frac{1}{2}e^{\frac{\Phi}{2}} \nabla \cdot (
e^{-\frac{\Phi}{2}} \varrho \nabla G ) + \beta \varrho +\frac{1}{2}\nabla
\varrho \cdot \nabla G &=  f
&& \hspace{-1cm}\text{in } \Omega , \\
(\nu \nabla \varrho + \frac{1}{2} \varrho  \nabla G) \cdot \vec{n} &=0  &&
\hspace{-1cm}\text{on }
\Gamma.
\end{align*}
Testing this equation with $\varrho$ we obtain
\begin{align*}
  \nu \int _{\Omega} | \nabla \varrho |^2 \;\mathrm{d}x +
\frac{1}{2}\int_{\Omega} e^{-\frac{\Phi}{2}} \varrho \nabla
(e^{\frac{\Phi}{2}} \varrho)
\cdot \nabla G \; \mathrm{d}x + \frac{1}{2}\int_{\Omega}\varrho \nabla \varrho
\cdot \nabla G \; \mathrm{d}x = \int _\Omega  ( f-\beta \varrho) \varrho \;
\mathrm{d}x
\end{align*}
and hence
\begin{align*}
  \nu |\nabla \varrho|_{L^2(\Omega)}^2& \le  |\nabla G
|_{L^\infty(\Omega)}  |e^{-\frac{\Phi}{2}}|_{L^{\infty}(\Omega)} |e^{\frac{\Phi}{2}}|_{L^\infty(\Omega)}  |
\varrho
|_{L^2(\Omega)} (|\nabla \varrho| _{L^2(\Omega)}
 +  |\varrho |_{L^2(\Omega)})  \\ & \quad + ( |
f|_{L^2(\Omega)} + \beta |\varrho|_{L^2(\Omega)} )|\varrho|_{L^2(\Omega)}.
\end{align*}
Together with the continuous dependence of $\varrho \in L^2(\Omega)$ on $f,$ we
deduce the existence of a constant $K$ such that
\begin{align*}
  |\varrho| _{H^1(\Omega)} \le K | f|_{L^2(\Omega)}.
\end{align*}
Thus $-\mathcal{A}_s + \beta I$ has a compact resolvent as operator in
$L^2(\Omega).$ Consequently, the spectrum of $\mathcal{A}_s$ consists
entirely of isolated eigenvalues with finite multiplicity in $\mathbb R_-,$
with only accumulation point $-\infty,$ see, e.g., \cite[Chapter 3]{Kat80}.

The relation between the eigenfunctions of $\mathcal{A}$ and $\mathcal{A}_s$
follow immediately  from the definition of the operator $\mathcal{A}_s.$
Moreover, note that by \eqref{eq:aux_spec} it holds that $e^{-\frac{\Phi}{2}}
$ is an eigenfunction of $\mathcal{A}_s$ associated
to the eigenvalue $0.$ The associated eigenfunctions of $\mathcal{A}$ and
$\mathcal{A}^*$ are $\rho_\infty= e^{-\Phi}$ and the
constant function $\mathbbm{1} $ with value $1,$ respectively.

\end{proof}

Since $\mathcal{A}_s$ is self-adjoint, it follows from
\begin{align*}
 \int_{\Omega} \varrho \mathcal{A}_s \varrho \; \mathrm{d}x \le 0 \ \ \text{for
all } \varrho \in \mathcal{D}(\mathcal{A}_s)
\end{align*}
that $\mathcal{A}_s$ is dissipative, see \cite[Chapter 1, Definition
4.1]{Paz83}. Together with the fact that the range of $\beta I - \mathcal{A}_s$
is
surjective, the Lumer-Phillips theorem \cite[Chapter 1, Theorem 4.3]{Paz83}
implies that $\mathcal{A}_s$ generates a semigroup of contractions on
$L^2(\Omega).$ Consequently $\mathcal{A}$ generates a semigroup $S(t)$ of class
$G(M,0)$ in $L^2(\Omega),$ i.e. $\| S(t)\| \le M$ for all $t.$ Moreover, $S(t)$ 
is an analytic semigroup, see,
e.g., \cite[Section 5.4] {Tan79} and the mild solution to
\eqref{eq:abs_lin_unc} is given by
\begin{equation}\label{eq:mild_sol_lin_unc}
  \begin{aligned}
    \rho(t)=S(t) \rho_0.
  \end{aligned}
\end{equation}

\subsection{Decoupling the Fokker-Planck equation}

According to Lemma \ref{lem:spec_A}, it is clear that $\rho_\infty=e^{-\Phi}$
is a stationary solution of \eqref{eq:FPE}. From now on, let us assume
that $\rho_\infty$ is normalized such that $\int_{\Omega} \rho_\infty
\; \mathrm{d}x = 1.$ While $\rho_\infty$ is asymptotically stable, the convergence rate (given by the second eigenvalue) can be undesirably slow.
An approximation of the convergence rate for small values of $\nu$ is given by:
$Ce^{-\Delta_G/\nu}$, where $C>0$ is a constant and where the constant
$\Delta_G$ -- called
energy activation -- is the highest potential barrier that the particle has to
overcome to reach the most stable equilibrium. This estimate is proved in
\cite[p.251]{MatS81} for 2-dimensional infinite potential fields. The case of
a bistable double-well potential with reflecting conditions (in dimension 1) is
also treated in \cite[Section 5.10.2]{Ris96}.

Following similar works
\cite{Ray06,TheBR10}, we subsequently study the applicability of a Riccati-based
feedback law obtained from a suitable stabilization problem.
Starting from \eqref{eq:abs_pur_bil}, let us introduce the shifted state $y:=
\rho - \rho_\infty.$ Using that $\mathcal{A} \rho_\infty=0,$ we obtain the
transformed system
\begin{equation}\label{eq:abs_bil}
  \begin{aligned}
    \dot{y}(t) &= \mathcal{A} y(t) + \mathcal{N}y(t)
u(t) + \mathcal{B} u(t), \\
y(0) &= \rho_0-\rho_\infty,
  \end{aligned}
\end{equation}
with $\mathcal{B}=\mathcal{N}\rho_\infty.$
Here, the control operator $\mathcal{B}$ and its adjoint
are defined as
\begin{align*}
  \mathcal{B}&\colon \mathbb R \to L^2(\Omega), \ \ \mathcal{B} c=
c\mathcal{N}\rho_\infty,\\
  \mathcal{B}^* & \colon L^2(\Omega) \to \mathbb R, \ \ \mathcal{B}^*v =
\langle \mathcal{N} \rho_\infty, v \rangle  .
\end{align*}
For our feedback design, it will be convenient to work with a decoupled version
of \eqref{eq:abs_bil}. We therefore introduce the projection $\mathcal{P}$
\textit{onto $\mathbbm{1}^\perp$ along $\rho_\infty$}
\begin{equation*}
  \begin{aligned}
    &\mathcal{P}\colon L^2(\Omega)\to L^2(\Omega), \quad \mathcal{P}y = y -
 \int_{\Omega} y \;\mathrm{d}x \;  \rho_\infty, \\
&\mathrm{im}(\mathcal{P})  =\left\{ v \in L^2(\Omega)\colon
\int_\Omega v\; \mathrm{d}x = 0 \right\}, \quad
\mathrm{ker}(\mathcal{P}) = \mathrm{span}\left\{\rho_\infty \right\}.
  \end{aligned}
\end{equation*}
Hence, the complementary projection $\mathcal{Q}$ is given as
\begin{equation*}
  \begin{aligned}
&\mathcal{Q}\colon L^2(\Omega)\to L^2(\Omega), \quad \mathcal{Q}y =
(I-\mathcal{P} )y =
\int_{\Omega} y \;\mathrm{d}x  \; \rho_\infty, \\
&\mathrm{im}(\mathcal{Q})  =\mathrm{ker}(\mathcal{P}), \quad
\mathrm{ker}(\mathcal{Q}) = \mathrm{im}(\mathcal{P}).
  \end{aligned}
\end{equation*}
With these definitions, the $L^2(\Omega)$ adjoint  of $\mathcal{P}$ is the
projection $\mathcal{P}^*$ \textit{onto $\rho_\infty^\perp$ along
$\mathbbm{1}$}
\begin{align*}
 &\mathcal{P}^*\colon L^2(\Omega)\to L^2(\Omega), \quad  \mathcal{P}^*y = y-
\int_{\Omega}  \rho_\infty y\; \mathrm{d}x\;\mathbbm{1},\\
&\mathrm{im}(\mathcal{P}^*)  =\left\{ v \in L^2(\Omega)\colon
\int_\Omega \rho_\infty v\; \mathrm{d}x = 0 \right\}, \quad
\mathrm{ker}(\mathcal{P}^*) = \left\{ \mathbbm{1} \right\}.
\end{align*}
Finally, the complementary projection $\mathcal{Q}^*$ reads
\begin{align*}
  &\mathcal{Q}^*\colon L^2(\Omega)\to L^2(\Omega), \quad  \mathcal{Q}^*y =
\int_{\Omega}  \rho_\infty y\; \mathrm{d}x\;\mathbbm{1}, \\
&\mathrm{im}(\mathcal{Q}^*)  =\mathrm{ker}(\mathcal{P}^*), \quad
\mathrm{ker}(\mathcal{Q}^*) = \mathrm{im}(\mathcal{P}^*).
\end{align*}
We now can decompose our state space as follows
\begin{equation}\label{eq:decomp_state_space}
\begin{aligned}
\mathcal{Y}&=L^2(\Omega) = \mathrm{im}(\mathcal{P}) \oplus
\mathrm{im}(\mathcal{Q})=:
\mathcal{Y}_{\mathcal{P}} \oplus\mathcal{Y}_{\mathcal{Q}}, \\
y &=
y_{\mathcal{P}} + y_{\mathcal{Q}}= \mathcal{P}y + \mathcal{Q}y, \ y \in
L^2(\Omega).
\end{aligned}
\end{equation}
This results in the following decomposition of \eqref{eq:abs_bil}
\begin{equation*}
\begin{aligned}
  \dot{y}_\mathcal{P} + \dot{y}_{\mathcal{Q}}&= \mathcal{A}(y_\mathcal{P}
  +y_\mathcal{Q} ) + \mathcal{N} (y_\mathcal{P}   +y_\mathcal{Q} )u +
\mathcal{B} u \\
y_\mathcal{P}(0) &= \mathcal{P}\rho_0 ,\quad  y_\mathcal{Q}(0)= \mathcal{Q}
\rho_0 - \rho_\infty.
\end{aligned}
\end{equation*}
Applying respectively $\mathcal{P}$ and $\mathcal{Q}$ to this equation
yields
\begin{equation}\label{eq:abs_bil_proj_a}
\begin{aligned}
  \begin{pmatrix}
   \dot{y}_\mathcal{P} \\
   \dot{y}_\mathcal{Q}
  \end{pmatrix}
 = \begin{pmatrix}
      \mathcal{P} \mathcal{A} & \mathcal{P} \mathcal{A} \\
      \mathcal{Q} \mathcal{A} & \mathcal{Q} \mathcal{A}
   \end{pmatrix}
   \begin{pmatrix} y_\mathcal{P} \\ y_\mathcal{Q} \end{pmatrix}+
   \begin{pmatrix}
      \mathcal{P} \mathcal{N} & \mathcal{P} \mathcal{N} \\
      \mathcal{Q} \mathcal{N} & \mathcal{Q} \mathcal{N}
   \end{pmatrix}
   \begin{pmatrix} y_\mathcal{P} \\ y_\mathcal{Q} \end{pmatrix} u +
   \begin{pmatrix}
        \mathcal{P} \mathcal{B} \\
       \mathcal{Q} \mathcal{B}
   \end{pmatrix} u.
\end{aligned}
\end{equation}
Let us note
that $\mathcal{A}\rho_\infty=0$, $\mathcal{A}^*\mathbbm{1}=0$ and $\mathcal{N}^*
\mathbbm{1}=0.$ For $ y_\mathcal{P}\in \mathrm{im}(\mathcal{P})
\cap \mathcal{D}(\mathcal{A}), y_\mathcal{Q}\in
\mathrm{im}(\mathcal{Q})\cap
\mathcal{D}(\mathcal{A})$ and $v\in \mathcal{D}(\mathcal{A}^*),$ observe
that
\begin{equation*}
\langle \mathcal{A} y_\mathcal{Q},v \rangle = 0, \ \
 \langle \mathcal{Q}\mathcal{A}y_\mathcal{P},v
\rangle = \langle y_\mathcal{P},\mathcal{A}^*\mathcal{Q}^* v
\rangle  = 0.
\end{equation*}
For $y \in H^1(\Omega)$ and $v \in L^2(\Omega)$,
\begin{align*}
& \langle \mathcal{Q} \mathcal{N} y, v \rangle= \langle y,  \mathcal{N}^*
\mathcal{Q}^* v \rangle= 0, \\
& \langle \mathcal{Q} \mathcal{B}, v \rangle   =
\langle \mathcal{Q} \mathcal{N} \rho_\infty, v \rangle   = \langle
\rho_\infty, \mathcal{N}^* \mathcal{Q}^* v \rangle  =0.
\end{align*}
%
%
Hence, we have the identities:
\begin{equation}
\begin{aligned} \label{eq:identities}
& \mathcal{P}\mathcal{A}= \mathcal{A}\ \text{(on $\mathcal{D}(\mathcal{A})$)}, \quad
\mathcal{Q} \mathcal{A}= 0 \ \text{(on $\mathcal{D}(\mathcal{A})$)}, \quad
\mathcal{P} \mathcal{N}= \mathcal{N} \ \text{(on $H^1(\Omega)$)} \\
& \mathcal{Q} \mathcal{N}= 0 \ \text{(on $H^1(\Omega)$)}, \quad
\mathcal{P} \mathcal{B}= \mathcal{B}\ \text{(on $\mathbb R$)}, \quad
\mathcal{Q} \mathcal{B}= 0\ \text{(on $\mathbb R$)}.
\end{aligned}
\end{equation}
As a consequence, \eqref{eq:abs_bil_proj_a} simplifies as follows:
\begin{equation*}
\begin{pmatrix} \dot{y}_{\mathcal{P}} \\ \dot{y}_{\mathcal{Q}} \end{pmatrix}=
\begin{pmatrix}
\mathcal{P}\mathcal{A} & 0 \\ 0 & 0
\end{pmatrix}
\begin{pmatrix}
y_{\mathcal{P}} \\ y_{\mathcal{Q}}
\end{pmatrix}
+ u
\begin{pmatrix}
\mathcal{P} \mathcal{N} & \mathcal{P} \mathcal{N} \\ 0 & 0
\end{pmatrix}
\begin{pmatrix}
y_{\mathcal{P}} \\ y_{\mathcal{Q}}
\end{pmatrix}
+ u
\begin{pmatrix}
\mathcal{P}\mathcal{B} \\ 0,
\end{pmatrix}
\end{equation*}
hence,
\begin{equation*}
  \begin{aligned}
       \dot{y}_\mathcal{P}
 &=         \mathcal{A}  y_\mathcal{P} +
       \mathcal{N}y_\mathcal{P}u +
\mathcal{N} (\mathcal{Q}\rho_0 - \rho_\infty ) u +
         \mathcal{B} u, \quad y_\mathcal{P}(0) = \mathcal{P} \rho
_0, \\
 y_\mathcal{Q}(t) &= \mathcal{Q}\rho_0 - \rho_\infty, \ t\ge 0.
   \end{aligned}
\end{equation*}
By definition of $\mathcal{B}$ and the fact that  $\int_{\Omega} \rho_0 \;
\mathrm{d}x=1,$ we finally obtain:
\begin{equation}\label{eq:abs_bil_proj_b}
\begin{aligned}
\dot{y}_\mathcal{P} &= \widehat{\mathcal{A}} y_\mathcal{P} +
\widehat{\mathcal{N}}y_\mathcal{P}u + \widehat{\mathcal{B}}u,
\quad y_\mathcal{P}(0)= \mathcal{P} \rho_0, \\
y_\mathcal{Q}(t) &= \mathcal{Q}\rho_0 - \rho_\infty=0, \ t\ge 0,
\end{aligned}
\end{equation}
where $I_\mathcal{P}\colon \mathcal{Y}_\mathcal{P} \to \mathcal{Y}$ denotes the
injection of $\mathcal{Y}_\mathcal{P}$ into $\mathcal{Y}$ and
\begin{align*}
 \widehat{\mathcal{A}}&=\mathcal{A}
I_\mathcal{P} \text{ with }\mathcal{D}(\widehat{\mathcal{A}})=
\mathcal{D}(\mathcal{A})\cap
\mathcal{Y}_\mathcal{P} , \\
\widehat{\mathcal{N}} &= \mathcal{N} I_\mathcal{P} \text{ with }
\mathcal{D}(\widehat{\mathcal{N}}) =H^1(\Omega) \cap \mathcal{Y}_\mathcal{P}
 , \\
\widehat{\mathcal{B}} &= \mathcal{B} 
\end{align*}
are operators considered in $\mathcal{Y}_\mathcal{P}.$

\section{A Riccati-based feedback law}\label{sec:Ric}

\subsection{Stabilizing the linearized system}\label{subsec:lin}
For the linearized  decoupled and shifted system
\begin{equation}\label{eq:abs_lin}
  \begin{aligned}
   \dot{y}_ \mathcal{P} &=  (\widehat{\mathcal{A}}+\delta I) y_ \mathcal{P}(t)
+
 \widehat{\mathcal{B}}
u, \quad
y_ \mathcal{P}(0) = \mathcal{P}\rho_0,
  \end{aligned}
\end{equation}
let us focus on the cost functional
\begin{equation}\label{eq:cost_func_abs}
 J(y_ \mathcal{P},u)= \frac{1}{2}\int_0^\infty \langle y_\mathcal{P}
(t),\mathcal{M}
y _ \mathcal{P}(t) \rangle
_{L^2(\Omega)} \; \mathrm{d}t + \frac{1}{2}\int_0^\infty
|u(t)|^2 \; \mathrm{d}t,
\end{equation}
where $\mathcal{M}\in\mathcal{L}(\mathcal{Y}_\mathcal{P})$ is a self-adjoint
nonnegative operator on
$\mathcal{Y}_{\mathcal{P}}$ which is such that the pair $(\mathcal{A},\mathcal{M})$ is detectable.
We denote by $\Theta$ the orthogonal projection on
$\mathcal{Y}_{\mathcal{P}}$:
\begin{equation}\label{eq:orth_proj}
  \begin{aligned}
    &\Theta\colon L^2(\Omega)\to L^2(\Omega), \quad \Theta y = y
-
 \frac{1}{|\Omega|}\int_{\Omega} y \;\mathrm{d}x \; \mathbbm{1}, \\
&\mathrm{im}(\Theta)  =\mathrm{im}(\mathcal{P})=\mathcal{Y}_\mathcal{P}, \quad
\mathrm{ker}(\Theta) = \left\{ \mathbbm{1} \right\}.
  \end{aligned}
\end{equation}
Note that $\Theta^* = \Theta$ and, in particular,
$\Theta=I_\mathcal{P}^*.$ Let us then define the operator
\begin{align*}
  \mathcal{A}^\sharp\colon
\mathcal{D}(\mathcal{A}^\sharp) &\subset \mathcal{Y}_\mathcal{P} \to
\mathcal{Y}_\mathcal{P}, \quad
\mathcal{D}(\mathcal{A}^\sharp)= \mathcal{D}(\mathcal{A}^*) \cap
\mathcal{Y}_\mathcal{P}, \quad
\mathcal{A}^\sharp\phi  = \Theta \mathcal{A}^*\phi.
\end{align*}
\begin{lemma}\label{lemma:adjointAhat}
The operator $\mathcal{A}^{\sharp}$ is the adjoint operator of
$\widehat{\mathcal{A}}$. Moreover, let $(\lambda,\phi) \in \mathbb{R} \times
\mathcal{D}(\mathcal{A}^{\sharp})$ be such that $\mathcal{A}^{\sharp} \phi=
\lambda \phi$. Then, $(\lambda, \mathcal{P}^* \phi)$ is an eigenpair of
$\mathcal{A}^*$. Conversely, if $(\lambda,\varphi) \in \mathbb{R} \times
\mathcal{D}(\mathcal{A}^*)$ is an eigenpair of $\mathcal{A}^*$, then
$(\lambda,\Theta \varphi)$ is an eigenpair of $\mathcal{A}^{\sharp}$.
\end{lemma}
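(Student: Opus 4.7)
The plan is to handle the lemma in three parts: the adjoint identification, then the two eigenpair correspondences. Throughout, I will repeatedly exploit three facts already established: $\mathcal{A}\rho_\infty = 0$, $\mathcal{A}^*\mathbbm{1} = 0$ with $\mathbbm{1} \in \mathcal{D}(\mathcal{A}^*)$, and the normalization $\int_\Omega \rho_\infty\,\mathrm{d}x = 1$.

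For the adjoint identification, I first verify $\mathcal{D}(\mathcal{A}^\sharp) \subseteq \mathcal{D}(\widehat{\mathcal{A}}^*)$ with matching action. For $\phi \in \mathcal{D}(\mathcal{A}^\sharp) = \mathcal{D}(\mathcal{A}^*) \cap \mathcal{Y}_\mathcal{P}$ and $y \in \mathcal{D}(\widehat{\mathcal{A}})$, the key observation is that $\mathcal{A}^*\phi - \Theta \mathcal{A}^*\phi$ is a scalar multiple of $\mathbbm{1}$ while $y \in \mathcal{Y}_\mathcal{P}$ has zero mean; thus
\begin{equation*}
\langle \widehat{\mathcal{A}} y, \phi\rangle = \langle \mathcal{A} y, \phi\rangle = \langle y, \mathcal{A}^*\phi\rangle = \langle y, \Theta \mathcal{A}^*\phi\rangle = \langle y, \mathcal{A}^\sharp \phi\rangle.
\end{equation*}
For the reverse inclusion, I suppose $\phi \in \mathcal{D}(\widehat{\mathcal{A}}^*)$ with $\widehat{\mathcal{A}}^*\phi = \psi \in \mathcal{Y}_\mathcal{P}$. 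The trick is to extend the duality to arbitrary $y \in \mathcal{D}(\mathcal{A})$ by decomposing $y = \mathcal{P} y + \mathcal{Q} y$ and using $\mathcal{A}(\mathcal{Q} y) = 0$ (since $\mathcal{Q} y$ is a scalar multiple of $\rho_\infty \in \mathcal{D}(\mathcal{A})$), so that $\mathcal{A} y = \mathcal{A}(\mathcal{P} y)$. Applying the given adjoint identity to $\mathcal{P} y \in \mathcal{D}(\widehat{\mathcal{A}})$ yields $\langle \mathcal{A} y, \phi\rangle = \langle \mathcal{P} y, \psi\rangle = \langle y, \mathcal{P}^*\psi\rangle$ for all $y \in \mathcal{D}(\mathcal{A})$, whence $\phi \in \mathcal{D}(\mathcal{A}^*)$ and $\mathcal{A}^*\phi = \mathcal{P}^*\psi$. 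A short direct computation using $\int_\Omega \psi\,\mathrm{d}x = 0$ then gives $\Theta \mathcal{P}^*\psi = \psi$, identifying $\mathcal{A}^\sharp \phi$ with $\widehat{\mathcal{A}}^*\phi$.

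For the second claim, starting from $\Theta \mathcal{A}^*\phi = \lambda \phi$ I conclude that $\mathcal{A}^*\phi - \lambda \phi = c\mathbbm{1}$ for some scalar $c$. Pairing with $\rho_\infty$ and using $\langle \mathcal{A}^*\phi, \rho_\infty\rangle = \langle \phi, \mathcal{A}\rho_\infty\rangle = 0$ together with $\int_\Omega \rho_\infty = 1$ pins down $c = -\lambda \langle \phi, \rho_\infty\rangle$, so $\mathcal{A}^*\phi = \lambda \mathcal{P}^*\phi$. Since $\mathcal{A}^*\mathbbm{1} = 0$, the same value equals $\mathcal{A}^*(\mathcal{P}^*\phi)$. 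The third claim is even shorter: $\Theta \varphi = \varphi - \bar\varphi\, \mathbbm{1} \in \mathcal{D}(\mathcal{A}^*) \cap \mathcal{Y}_\mathcal{P} = \mathcal{D}(\mathcal{A}^\sharp)$, and $\mathcal{A}^\sharp \Theta \varphi = \Theta(\mathcal{A}^*\varphi - \bar\varphi\, \mathcal{A}^*\mathbbm{1}) = \Theta(\lambda \varphi) = \lambda \Theta \varphi$.

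The main obstacle is the reverse inclusion in the adjoint identification; it is the only step not reducible to direct substitution. It hinges on the realization that projecting $y \in \mathcal{D}(\mathcal{A})$ onto $\mathcal{Y}_\mathcal{P}$ leaves $\mathcal{A} y$ unchanged, so an adjoint identity known only on the subspace automatically propagates to the full domain $\mathcal{D}(\mathcal{A})$, allowing us to invoke the definition of $\mathcal{A}^*$ on $L^2(\Omega)$.
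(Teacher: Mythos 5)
Your proof is correct and follows essentially the same route as the paper's: direct duality computations with the projections, exploiting $\mathcal{A}\rho_\infty=0$, $\mathcal{A}^*\mathbbm{1}=0$ and the zero-mean property of $\mathcal{Y}_\mathcal{P}$, with the same $\Theta$/$\mathcal{P}^*$ manipulations for the two eigenpair correspondences (your scalar argument $\mathcal{A}^*\phi-\lambda\phi=c\mathbbm{1}$ is just a pointwise version of the paper's identity $\mathcal{A}^*z=\mathcal{P}^*\Theta\mathcal{A}^*z$). The one place you go beyond the paper is the reverse domain inclusion $\mathcal{D}(\widehat{\mathcal{A}}^*)\subseteq\mathcal{D}(\mathcal{A}^\sharp)$, which the paper's proof leaves implicit; your observation that $\mathcal{A}y=\mathcal{A}(\mathcal{P}y)$ propagates the adjoint identity from $\mathcal{D}(\widehat{\mathcal{A}})$ to all of $\mathcal{D}(\mathcal{A})$ closes that gap cleanly.
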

\begin{proof}
For $y_\mathcal{P} \in \mathcal{D}(\widehat{\mathcal{A}})$ and $z_\mathcal{P}
\in \mathcal{D}(\mathcal{A}^\sharp)$ it now holds that
\begin{align*}
  \langle \widehat{\mathcal{A}}y_\mathcal{P},z_\mathcal{P} \rangle
  =\langle \mathcal{A}I_\mathcal{P}y_\mathcal{P},z_\mathcal{P} \rangle
=\langle y_\mathcal{P},I_\mathcal{P}^*\mathcal{A}^* z_\mathcal{P} \rangle
= \langle  y_\mathcal{P}, \Theta\mathcal{A}^*z_\mathcal{P}
\rangle  = \langle y_\mathcal{P},\mathcal{A}^\sharp z_\mathcal{P}
\rangle.
\end{align*}
Note also that
\begin{align*}
 \langle \mathcal{P} \widehat{\mathcal{A}} y_\mathcal{P},z_\mathcal{P} \rangle
= \langle y_\mathcal{P}, I_\mathcal{P}^* \mathcal{A}^*
(z_\mathcal{P}-\mathcal{Q}^* z_\mathcal{P} ) \rangle = \langle y_\mathcal{P},
I_\mathcal{P}^* \mathcal{A}^* z_\mathcal{P} \rangle = \langle y_\mathcal{P},
\mathcal{A}^\sharp z_\mathcal{P} \rangle,
\end{align*}
such that we conclude that $\mathcal{A}^\sharp =
(\mathcal{P}\mathcal{A} I_\mathcal{P})^*=\widehat{\mathcal{A}}^*.$
For
what follows, let $y \in \mathcal{D}(\mathcal{A})$ and
$z\in\mathcal{D}(\mathcal{A}^*)$ be given. Since
$I=\mathcal{P}^*+\mathcal{Q}^*$ and $I = \Theta+ (I-\Theta),$ we then have
\begin{align*}
  \langle y,\mathcal{A}^*z \rangle =  \langle y,\mathcal{P}^* \Theta
\mathcal{A}^*z \rangle + \langle y,\mathcal{P}^* (I-\Theta) \mathcal{A}^* z
\rangle + \langle y, \mathcal{Q}^* \mathcal{A}^*z \rangle.
\end{align*}
Using that $\mathrm{im}(I-\Theta)=\mathrm{ker}(\mathcal{P}^*)$ and
$\mathrm{im}(\mathcal{Q}) = \{\rho_\infty\},$ we obtain
\begin{align}\label{eq:aux_eigf}
 \langle y,\mathcal{A}^*z \rangle =  \langle y,\mathcal{P}^* \Theta
\mathcal{A}^*z \rangle  + \langle \mathcal{A} \mathcal{Q}y, z \rangle = \langle
y,\mathcal{P}^*\Theta \mathcal{A}^*z \rangle.
\end{align}
This yields the following relation between the eigenfunctions of $\mathcal{A}^*$
and those of $\mathcal{A}^\sharp.$ Let $(\lambda,\phi)$ be such that
$\mathcal{A}^\sharp \phi = \lambda \phi.$ It then follows  by
\eqref{eq:aux_eigf} that
\begin{align*}
  \mathcal{P}^* (\lambda \phi) = \mathcal{P}^*( \mathcal{A}^\sharp \phi ) =
\mathcal{P}^* (\Theta \mathcal{A}^*) \phi = \mathcal{A}^* \phi = \mathcal{A}^*
(\mathcal{P}^* + \mathcal{Q}^*) \phi = \mathcal{A}^*\mathcal{P}^* \phi.
\end{align*}
Hence, $(\lambda,\mathcal{P}^*\phi)$ is an eigenpair of $\mathcal{A}^*.$
Analogously, assume that $(\lambda,\varphi)$ satisfies $\mathcal{A}^* \varphi =
\lambda \varphi.$ We now obtain
\begin{align*}
  \Theta (\lambda \varphi) = \Theta (\mathcal{A}^*\varphi)=\Theta \mathcal{A}^*
(\Theta + (I-\Theta) ) \varphi = \Theta \mathcal{A}^* \Theta \varphi,
\end{align*}
implying that $(\lambda,\Theta \varphi)$ is an eigenpair of
$\mathcal{A}^\sharp.$
\end{proof}

\subsection{Stabilizability and the choice of $\alpha$}
\label{subsec:alpha}
Let us also note that the adjoint of $\widehat{\mathcal{B}}=\mathcal{P}
\mathcal{B}$ as operator from $\mathbb R$ to $\mathcal{Y}_\mathcal{P}$ is given
by $\widehat{\mathcal{B}}^*=\mathcal{B}^* \mathcal{P}^* = \mathcal{B}^*
I_\mathcal{P}$ and we drop the notation $I_\mathcal{P}$ below.

Up to this point, we have assumed that $\alpha \in
W^{1,\infty}(\Omega)\cap W^{2,\max(2,n)}(\Omega)$ is such that
\eqref{eq:shape_boundary_cond} is fulfilled. Let us now provide further details
on how to choose $\alpha.$ It is well-known \cite{CurZ95} that the cost
functional \eqref{eq:cost_func_abs} is naturally associated to the following
operator Riccati equation
\begin{align*}
 (\widehat{\mathcal{A}}+\delta I)^* \widehat{\Pi} + \widehat{\Pi}
(\widehat{\mathcal{A}}+\delta I) - \widehat{\Pi} \widehat{\mathcal{B}}
\widehat{\mathcal{B}}^* \widehat{\Pi}   + \mathcal{M}  = 0,
\end{align*}
which is interpreted in the weak sense, i.e.,
\begin{align*}
 &\left\langle (\widehat{\mathcal{A}}+\delta I)^* \widehat{\Pi}y_1,y_2
\right\rangle _{L^2(\Omega)} + \left\langle \widehat{\Pi}
(\widehat{\mathcal{A}}+\delta I)y_1,y_2 \right \rangle _{L^2(\Omega)} \\ &
\qquad  - \left\langle \widehat{\mathcal{B}}^*
\widehat{\Pi}y_1,\widehat{\mathcal{B}}^* \widehat{\Pi} y_2 \right\rangle
_{\mathbb R} + \left\langle \mathcal{M} y_1,y_2\right\rangle _{L^2(\Omega)} =
0,
\end{align*}
for all $y_1,y_2 \in \mathcal{D}(\widehat{\mathcal{A}}).$
In particular, in case the pair $(\widehat{\mathcal{A}},\widehat{ \mathcal{B}}
)$ is $\delta$-stabilizable, see \cite[Definition 5.2.1]{CurZ95}, there exists
a unique nonnegative self-adjoint solution $\widehat{\Pi} \in \mathcal{L}(
\mathcal{Y}_\mathcal{P})$ such that
$$ \widehat{\mathcal{A}}_\Pi:=
\widehat{\mathcal{A}}+\delta
I-\widehat{\mathcal{B}} \widehat{\mathcal{B}}^* \widehat{\Pi}
$$
generates an exponentially stable semigroup on $\mathcal{Y}_\mathcal{P},$ see,
e.g., \cite[p.295]{CurZ95},\cite[p.125-127]{LasT00} or
\cite[p.519]{Benetal07}, where it is also proved that $\widehat{\Pi}$ enjoys
extra regularity since $\widehat{\mathcal{A}}$ is analytic. With
regard to $\delta$-stabilizability of
$(\widehat{\mathcal{A}},\widehat{ \mathcal{B}} ),$ assume that $\varphi_i$ are
eigenfunctions of $\mathcal{A}^*$ associated to the eigenvalues
\begin{align*}
  -\delta \le \lambda_d \le \dots \le \lambda_2 < 0=\lambda_1.
\end{align*}
With the notation introduced before, consider then the elliptic equation
\begin{equation}\label{eq:elliptic_alpha}
\begin{aligned}
  \nabla \cdot (\rho_\infty \nabla \alpha) &=  \mathcal{P} \sum_{i=2}^d
e^{-\Phi} \varphi_i && \text{in } \Omega, \\
(\rho_\infty \nabla \alpha )\cdot \vec{n} &= 0  && \text{on } \Gamma.
\end{aligned}
\end{equation}
From classical elliptic regularity results, see, e.g., \cite[Theorem
3.28/3.29]{Tro87}, we
conclude that there exists a unique solution $\alpha \in
W^{2,p}(\Omega)/\mathbb R$ for any $p>0$ to \eqref{eq:elliptic_alpha}. In
particular, $\alpha \in W^{1,\infty}(\Omega)/\mathbb R\cap
W^{2,\max(2,n)}(\Omega)/\mathbb R.$ As a
consequence of
this choice of $\alpha,$ we obtain the desired stabilizability result.
\begin{lemma}\label{lem:stabilizability}
  Let $\alpha \in W^{1,\infty}(\Omega)/\mathbb R\cap
W^{2,\max(2,n)}(\Omega)/\mathbb R$ denote the unique
solution to
\eqref{eq:elliptic_alpha}. Then the pair $(\widehat{\mathcal{A}},\widehat{
\mathcal{B}} )$ is $\delta$-stabilizable.
\end{lemma}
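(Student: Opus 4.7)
The plan is to reduce $\delta$-stabilizability to a Hautus-type eigenvalue criterion and then to verify that criterion by exploiting the elliptic equation \eqref{eq:elliptic_alpha} that defines $\alpha$.

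First I would observe that $\widehat{\mathcal{A}}$ inherits analyticity and compactness of the resolvent from $\mathcal{A}$ (Lemma \ref{lem:spec_A}), using the invariance relations \eqref{eq:identities}. Under these assumptions, $\delta$-stabilizability of $(\widehat{\mathcal{A}},\widehat{\mathcal{B}})$ is equivalent to the following eigenvalue test (see e.g.\ \cite[Section 5.2]{CurZ95}): every eigenfunction $\phi$ of $\mathcal{A}^{\sharp} = \widehat{\mathcal{A}}^*$ whose eigenvalue satisfies $\lambda \ge -\delta$ must fulfill $\widehat{\mathcal{B}}^*\phi \ne 0$. By Lemma \ref{lemma:adjointAhat}, the eigenpairs of $\mathcal{A}^{\sharp}$ are in bijection with those of $\mathcal{A}^*$ via $\varphi \mapsto \Theta \varphi$. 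Since $\Theta \mathbbm{1} = 0$, the eigenvalue $\lambda_1 = 0$ drops out, so the eigenvalues of $\mathcal{A}^{\sharp}$ in $[-\delta,0)$ are exactly $\lambda_2,\ldots,\lambda_d$, with eigenfunctions $\Theta \varphi_k$, $k = 2,\ldots,d$ (after choosing in each eigenspace a basis orthonormal with respect to the $\rho_\infty$-weighted inner product).

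The heart of the argument is then the computation of $\widehat{\mathcal{B}}^* \Theta \varphi_k = \langle \mathcal{N}\rho_\infty, \Theta \varphi_k \rangle$. The divergence theorem combined with the boundary condition $(\rho_\infty \nabla \alpha) \cdot \vec n = 0$ yields $\int_\Omega \mathcal{N}\rho_\infty \, \mathrm{d}x = 0$, so the mean-subtraction effected by $\Theta$ is inoffensive and $\widehat{\mathcal{B}}^* \Theta \varphi_k = \langle \mathcal{N}\rho_\infty, \varphi_k \rangle$. Substituting \eqref{eq:elliptic_alpha} and moving $\mathcal{P}$ to the second argument gives
\begin{equation*}
\langle \mathcal{N}\rho_\infty, \varphi_k \rangle = \left\langle \sum_{i=2}^d e^{-\Phi}\varphi_i,\; \mathcal{P}^*\varphi_k \right\rangle.
\end{equation*}
For $k \ge 2$, $\int_\Omega \rho_\infty \varphi_k \, \mathrm{d}x = 0$ (the $\rho_\infty$-weighted orthogonality of $\varphi_k$ to $\varphi_1 = \mathbbm{1}$), hence $\mathcal{P}^*\varphi_k = \varphi_k$; the same orthogonality, applied term by term in the sum, collapses the right-hand side to $\int_\Omega \rho_\infty \varphi_k^2 \, \mathrm{d}x > 0$. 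These $\rho_\infty$-weighted orthogonality relations are themselves inherited from the $L^2$-orthogonality of the $\mathcal{A}_s$-eigenfunctions $e^{\Phi/2}\psi_i$ provided by Lemma \ref{lem:spec_A}. Thus the Hautus condition is verified.

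The main obstacle is conceptual rather than technical: the scalar control must excite every mode in the unstable cluster $\{\lambda_2,\ldots,\lambda_d\}$, and the source term in \eqref{eq:elliptic_alpha} is engineered precisely so that the $\rho_\infty$-orthogonality of $\mathcal{A}^*$-eigenfunctions decouples the modes in the inner product $\langle \mathcal{N}\rho_\infty,\Theta\varphi_k \rangle$ and produces a strictly positive scalar along each diagonal. The remaining check, namely that the $\alpha$ produced by the elliptic theory satisfies the regularity and boundary-condition requirements of Subsection \ref{subsec:alpha}, has already been addressed in the paragraph following \eqref{eq:elliptic_alpha}.
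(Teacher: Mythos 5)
Your proposal is correct and follows essentially the same route as the paper: reduction to the infinite-dimensional Hautus test, the eigenpair correspondence of Lemma \ref{lemma:adjointAhat}, substitution of the right-hand side of \eqref{eq:elliptic_alpha}, and the $\rho_\infty$-weighted orthogonality of the $\mathcal{A}^*$-eigenfunctions to collapse the sum to $\|e^{-\Phi/2}\varphi_j\|^2>0$. You in fact supply slightly more detail than the paper (the harmlessness of the mean-subtraction by $\Theta$ via $\int_\Omega \mathcal{N}\rho_\infty\,\mathrm{d}x=0$, and the identity $\mathcal{P}^*\varphi_k=\varphi_k$ for $k\ge 2$), which the paper's computation passes over silently.
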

\begin{proof}
  We are going to verify the assertion by means of the infinite dimensional
Hautus test for stabilizability see \cite[Part V, Proposition 3.3]{Benetal07}
or \cite[Theorem 5.2.11]{CurZ95}. Hence, we need to show that
\begin{align*}
    \mathrm{ker}(\lambda I-\mathcal{A}^\sharp) \cap
\mathrm{ker}(\widehat{\mathcal{B}}^*) = \{0\} \quad
\text{for } \lambda \in   \overline{ {\mathbb C}}_{-\delta} \cap
\sigma(\mathcal{A}^\sharp),
\end{align*}
where $\overline{ {\mathbb C}}_{-\delta}= \{ \lambda \in \mathbb{C} \,|\, \text{Re}(\lambda) \geq - \delta \}$.
Let us therefore assume that $(\lambda_j,\phi_j),$ $j\in \{2,\dots,d\}$ is an
eigenpair of $\mathcal{A}^\sharp.$ By Lemma \ref{lem:spec_A}, Lemma
\ref{lemma:adjointAhat} and \eqref{eq:elliptic_alpha}, it follows that
\begin{align*}
  \widehat{\mathcal{B}}^* \phi_j= \langle \widehat{\mathcal{B}},\phi_j \rangle
=  \left\langle \mathcal{P} \sum_{i=2}^d
e^{-\Phi} \varphi_i,\phi_j \right\rangle = \sum _{i=2}^d \langle e^{-\Phi}
\varphi_i, \varphi_j \rangle = \| e^{-\frac{\Phi}{2}} \varphi_j \|^2
\end{align*}
which shows the statement.
\end{proof}

From now on, we assume that $\alpha$ is such that the Hautus criterion is satisfied and therefore that $(\widehat{\mathcal{A}},\widehat{\mathcal{B}})$ is $\delta$-stabilizable.

\subsection{The Riccati equation}

With the notation introduced in \eqref{eq:decomp_state_space},
consider the following two Riccati equations:
\begin{align*}
(\mathcal{A}^*+ \delta \mathcal{P}^*)\Pi + \Pi (\mathcal{A}+ \delta \mathcal{P})
- \Pi \mathcal{B} \mathcal{B}^* \Pi + \mathcal{P}^* \mathcal{M} \mathcal{P} = 0,
\quad & \Pi \in \mathcal{L}(\mathcal{Y}),\ \Pi^*= \Pi \tag{R1}
\label{eq:Riccati1} \\
(\widehat{\mathcal{A}}^*+ \delta I)\widehat{\Pi} + \widehat{\Pi}
(\widehat{\mathcal{A}}+ \delta I) - \widehat{\Pi} \widehat{\mathcal{B}}
\widehat{ \mathcal{B}}^*
\widehat{\Pi} + \mathcal{M} = 0, \quad & \widehat{\Pi} \in
\mathcal{L}(\mathcal{Y}_\mathcal{P}), \ \widehat{\Pi}^*= \widehat{\Pi}.
\tag{R2} \label{eq:Riccati2}
\end{align*}

\begin{lemma} \label{lem:reductionRiccati}
If the operator $\Pi \in \mathcal{L}(\mathcal{Y})$ is a solution to
\eqref{eq:Riccati1}, then
$\widehat{\Pi}:= \Theta \Pi I_\mathcal{P} \in
\mathcal{L}(\mathcal{Y}_\mathcal{P})$ is a
solution to \eqref{eq:Riccati2} and there exists $\gamma \in
\mathbb{R}$ such that  $\Pi= \mathcal{P}^* \widehat{\Pi} \mathcal{P} + \gamma
\mathbbm{1} \mathbbm{1}^*$.
Conversely, if $\widehat{\Pi}$ is a solution to \eqref{eq:Riccati2}, then for
all $\gamma \in \mathbb{R}$, $\Pi= \mathcal{P}^* \widehat{\Pi} \mathcal{P} +
\gamma \mathbbm{1} \mathbbm{1}^*$ is a solution to \eqref{eq:Riccati1}.
\end{lemma}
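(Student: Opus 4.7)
The plan is to exploit that \eqref{eq:Riccati1} is essentially block-triangular with respect to the domain decomposition $\mathcal{Y} = \mathcal{Y}_{\mathcal{P}} \oplus \mathrm{span}\{\rho_\infty\}$ and the codomain decomposition $\mathcal{Y} = \mathrm{im}(\mathcal{P}^*) \oplus \mathrm{span}\{\mathbbm{1}\}$. The crucial kernel identities are $(\mathcal{A}+\delta\mathcal{P})\rho_\infty = 0$, $(\mathcal{A}^*+\delta\mathcal{P}^*)\mathbbm{1}=0$, $\mathcal{B}^*\mathbbm{1}=0$ (from $\mathcal{N}^*\mathbbm{1}=0$) and its transpose $\mathbbm{1}^*\mathcal{B}=0$, together with $\mathcal{M}\mathcal{P}\rho_\infty = 0$ and the already-noted identities $\mathcal{A}^*\mathcal{P}^* = \mathcal{A}^*$, $\mathcal{P}\mathcal{B}=\mathcal{B}$, plus the identification $\widehat{\mathcal{A}}^* = \Theta\mathcal{A}^*$ from Lemma~\ref{lemma:adjointAhat}.

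For the reverse direction, I would substitute $\Pi = \mathcal{P}^*\widehat{\Pi}\mathcal{P} + \gamma\mathbbm{1}\mathbbm{1}^*$ into \eqref{eq:Riccati1}. By the kernel identities above, every term containing $\gamma\mathbbm{1}\mathbbm{1}^*$ is annihilated (on the left by $\mathcal{A}^*+\delta\mathcal{P}^*$ or by $\mathcal{B}^*$, on the right by the corresponding transpose), so $\gamma$ drops out entirely. Using $\mathcal{A}^*\mathcal{P}^* = \mathcal{A}^*$, $(\mathcal{P}^*)^2 = \mathcal{P}^*$, and $\mathcal{P}\mathcal{B} = \mathcal{B}$, what remains is exactly $\mathcal{P}^*$ times the left-hand side of \eqref{eq:Riccati2} times $\mathcal{P}$, which vanishes by hypothesis.

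For the forward direction, I would obtain \eqref{eq:Riccati2} by sandwiching \eqref{eq:Riccati1} between $\Theta$ on the left and $I_{\mathcal{P}}$ on the right, with $\widehat{\Pi} := \Theta\Pi I_{\mathcal{P}}$. The reductions $\mathcal{P} I_{\mathcal{P}} = I_{\mathcal{P}}$, $\Theta\mathcal{P}^* = \Theta$ (because $\Theta\mathbbm{1}=0$), $\mathcal{A}^*(I-\Theta) = 0$ (because $\mathcal{A}^*\mathbbm{1}=0$, which also shows that $\mathcal{A}^*\Pi I_{\mathcal{P}} y_{\mathcal{P}}$ coincides with $\mathcal{A}^*\widehat{\Pi} y_{\mathcal{P}}$), $\mathcal{B}^*\mathbbm{1}=0$, and $\Theta\mathcal{P}^* I_{\mathcal{P}} = I$ turn each of the four terms of \eqref{eq:Riccati1} into its counterpart in \eqref{eq:Riccati2}; self-adjointness of $\widehat{\Pi}$ follows from $\widehat{\Pi} = I_{\mathcal{P}}^* \Pi I_{\mathcal{P}}$.

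The main obstacle is the structural identity $\Pi = \mathcal{P}^*\widehat{\Pi}\mathcal{P} + \gamma \mathbbm{1}\mathbbm{1}^*$, for which the decisive step is proving that $\Pi\rho_\infty \in \mathrm{span}\{\mathbbm{1}\}$. Evaluating \eqref{eq:Riccati1} at $\rho_\infty$ kills the linear terms (since $(\mathcal{A}+\delta\mathcal{P})\rho_\infty = 0$ and $\mathcal{M}\mathcal{P}\rho_\infty = 0$), leaving $(\mathcal{A}^*+\delta\mathcal{P}^*)\Pi\rho_\infty = \Pi \mathcal{B}\mathcal{B}^*\Pi\rho_\infty$; pairing with $\rho_\infty$ and using $\Pi^* = \Pi$ yields $|\mathcal{B}^*\Pi\rho_\infty|^2 = 0$, so $\mathcal{B}^*\Pi\rho_\infty = 0$ and $(\mathcal{A}^*+\delta\mathcal{P}^*)\Pi\rho_\infty = 0$. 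Writing $\Pi\rho_\infty = \varphi + c\mathbbm{1}$ with $\varphi \in \mathrm{im}(\mathcal{P}^*)$ reduces the latter to $(\mathcal{A}^*+\delta I)\varphi = 0$; by Lemma~\ref{lemma:adjointAhat} and $\mathcal{B}^*\mathbbm{1}=0$, $\Theta\varphi$ is an eigenvector of $\mathcal{A}^\sharp$ at eigenvalue $-\delta$ lying in $\ker(\widehat{\mathcal{B}}^*)$, and the Hautus criterion established via Lemma~\ref{lem:stabilizability} forces $\Theta\varphi = 0$. Since $\mathrm{im}(\mathcal{P}^*) \cap \mathrm{span}\{\mathbbm{1}\} = \{0\}$, I obtain $\varphi = 0$ and $\Pi\rho_\infty = c\mathbbm{1}$. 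Setting $\gamma := c$ and invoking self-adjointness of $\Pi$ to see that $\mathcal{Q}^*\Pi$ annihilates $\mathcal{Y}_{\mathcal{P}}$, the two sides of the identity agree on $\mathcal{Y}_{\mathcal{P}}$ and on $\rho_\infty$, and by the direct sum decomposition of $\mathcal{Y}$ the formula holds throughout.
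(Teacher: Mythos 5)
Your proposal is correct and is in substance the same argument as the paper's: the decisive step in both is to show that the cross term $\Theta\Pi\rho_\infty$ vanishes by first extracting $\mathcal{B}^*\Pi\rho_\infty=0$ from the quadratic term, then $(\mathcal{A}^*+\delta I)$-invariance, and finally invoking the Hautus criterion. The only difference is organizational --- the paper conjugates by the homeomorphism $\mathcal{R}y=(\mathcal{P}y,\langle\mathbbm{1},y\rangle)$ and reads the three pieces ($\widehat{\Pi}$, the cross term, and $\gamma$) off a $2\times 2$ block decomposition of the transformed Riccati equation, whereas you carry out the same block computation directly with $\Theta$, $I_\mathcal{P}$, $\mathcal{P}^*$ and the kernel identities.
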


\begin{proof}
Let us define:
\begin{equation*}
\mathcal{R}\colon y \in \mathcal{Y} \mapsto (\mathcal{P}y,\langle \mathbbm{1},y
\rangle) \in \mathcal{Y}_\mathcal{P} \times \mathbb{R}.
\end{equation*}
The operator $\mathcal{R}$ is a homeomorphism. Note that for all $(z,\alpha) \in
\mathcal{Y}_\mathcal{P} \times \mathbb{R}$ and for all $y \in \mathcal{Y}$,
\begin{equation*}
\mathcal{R}^{-1} (z,\alpha) = z + \alpha \rho_\infty, \quad
\mathcal{R}^*(z,\alpha) = \mathcal{P}^*z + \alpha \mathbbm{1}, \quad
\mathcal{R}^{-*}y =(\Theta y, \langle \rho_\infty, y \rangle).
\end{equation*}
Let $\Pi \in \mathcal{L}(\mathcal{Y})$ be a solution to \eqref{eq:Riccati1} and
define  $\widetilde{\Pi}= \mathcal{R}^{-*} \Pi \mathcal{R}^{-1} \in
\mathcal{L}(\mathcal{Y}_\mathcal{P} \times \mathbb{R})$.
The operator $\widetilde{\Pi}$ is a solution to the following equation:
\begin{equation} \label{eq:RiccatiProj}
(\widetilde{\mathcal{A}} + \delta \widetilde{\mathcal{P}})^*
\widetilde{\Pi}+\widetilde{\Pi} (\widetilde{\mathcal{A}}+ \delta
\widetilde{\mathcal{P}})
 - \widetilde{\Pi}
\widetilde{\mathcal{B}} \widetilde{\mathcal{B}}^* \widetilde{\Pi} + \widetilde{\mathcal{M}}= 0,
\end{equation}
where:
\begin{equation*}
\widetilde{\mathcal{M}}= \mathcal{R}^{-*}\mathcal{P}^* \mathcal{M}\mathcal{P} \mathcal{R}^{-1}, \quad
\widetilde{\mathcal{A}}= \mathcal{R} \mathcal{A} \mathcal{R}^{-1}, \quad
\widetilde{\mathcal{P}}= \mathcal{R} \mathcal{P} \mathcal{R}^{-1}, \quad
\text{and} \quad
\widetilde{\mathcal{B}}= \mathcal{R}\mathcal{B}.
\end{equation*}
We represent any operator $\mathcal{X} \in \mathcal{L}(\mathcal{Y}_\mathcal{P}
\times \mathbb{R})$ as follows: $\mathcal{X}= \begin{pmatrix} \mathcal{X}_{11} &
\mathcal{X}_{12} \\ \mathcal{X}_{21} & \mathcal{X}_{22} \end{pmatrix}$,
where $\mathcal{X}_{11} \in \mathcal{L}(\mathcal{Y}_\mathcal{P})$,
$\mathcal{X}_{12} \in \mathcal{Y}_\mathcal{P}$, $\mathcal{X}_{21} \in
\mathcal{Y}_\mathcal{P}^*$, and $\mathcal{X}_{22} \in \mathbb{R}$ are uniquely
defined by the relation:
\begin{equation*}
\mathcal{X}(y,\beta)= (\mathcal{X}_{11}y + \mathcal{X}_{12} \beta,
\mathcal{X}_{21}y+ \mathcal{X}_{22} \beta), \quad \forall (y,\beta) \in
\mathcal{Y}_\mathcal{P} \times \mathbb{R}.
\end{equation*}
One can easily check with \eqref{eq:identities} that:
\begin{equation} \label{eq:reducedOp}
\widetilde{\mathcal{A}}= \begin{pmatrix} \widehat{\mathcal{A}} & 0 \\
0 & 0 \end{pmatrix}, \quad
\widetilde{\mathcal{P}}= \begin{pmatrix} I & 0 \\
0 & 0 \end{pmatrix}, \quad
\widetilde{\mathcal{M}}= \begin{pmatrix} \mathcal{M} & 0 \\ 0 & 0
\end{pmatrix}, \quad
\widetilde{\mathcal{B}}\widetilde{\mathcal{B}}^*= \begin{pmatrix}
\mathcal{B}\mathcal{B}^* & 0 \\ 0 & 0 \end{pmatrix}.
\end{equation}
Combining \eqref{eq:RiccatiProj} and \eqref{eq:reducedOp}, we obtain that
$\widetilde{\Pi}_{11}$ is a solution to \eqref{eq:Riccati2}. Moreover,
\begin{equation*}
(\widehat{\mathcal{A}}^* + \delta I) \widetilde{\Pi}_{12} - \widetilde{\Pi}_{11} \mathcal{B}
\mathcal{B}^* \widetilde{\Pi}_{12}= 0 \quad \text{and} \quad
\widetilde{\Pi}_{12}^* \mathcal{B} \mathcal{B}^* \widetilde{\Pi}_{12}= 0.
\end{equation*}
Thus, $\mathcal{B}^* \widetilde{\Pi}_{12}= 0$ and $(\widehat{\mathcal{A}}^* + \delta I)
\widetilde{\Pi}_{12}= 0$. As a consequence of the Hautus criterion,
$\widetilde{\Pi}_{12}=0$. Setting $\widehat{\Pi}= \widetilde{\Pi}_{11}$ and
$\gamma= \widetilde{\Pi}_{22}$, one can easily check that:
$\Pi= \mathcal{R}^* \widetilde{\Pi} \mathcal{R}= \mathcal{P}^* \widehat{\Pi}
\mathcal{P} + \gamma \mathbbm{1} \mathbbm{1}^*$.
The converse implication can be proved in a similar manner.
\end{proof}

\begin{lemma} \label{lmm:existenceUniquenessRiccati}
There exists a unique non-negative self-adjoint operator $\Pi$ solution to \eqref{eq:Riccati1} such that $\Pi \rho_\infty= 0$.
\end{lemma}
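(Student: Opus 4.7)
The plan is to reduce \eqref{eq:Riccati1} to the already-tractable Riccati equation \eqref{eq:Riccati2} using Lemma \ref{lem:reductionRiccati}, exploit the fact that the condition $\Pi \rho_\infty = 0$ singles out the parameter $\gamma = 0$ in the parametrization $\Pi = \mathcal{P}^* \widehat{\Pi} \mathcal{P} + \gamma \mathbbm{1} \mathbbm{1}^*$, and then transfer the classical well-posedness statement for \eqref{eq:Riccati2} back to $\Pi$.

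For existence, I would proceed as follows. By Lemma \ref{lem:stabilizability} the pair $(\widehat{\mathcal{A}} + \delta I, \widehat{\mathcal{B}})$ is exponentially stabilizable, and by assumption $(\mathcal{A},\mathcal{M})$ is detectable (which straightforwardly passes to $(\widehat{\mathcal{A}}+\delta I, \mathcal{M})$ on $\mathcal{Y}_\mathcal{P}$). The classical infinite-dimensional Riccati theory recalled above (e.g.\ \cite[p.\ 295]{CurZ95}) therefore yields a unique nonnegative self-adjoint $\widehat{\Pi} \in \mathcal{L}(\mathcal{Y}_\mathcal{P})$ solving \eqref{eq:Riccati2}. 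Set $\Pi := \mathcal{P}^* \widehat{\Pi} \mathcal{P}$; by the converse direction of Lemma \ref{lem:reductionRiccati} (with $\gamma = 0$), this $\Pi$ solves \eqref{eq:Riccati1}. Self-adjointness of $\Pi$ is immediate from that of $\widehat{\Pi}$, and for any $y \in \mathcal{Y}$ we have $\langle \Pi y, y \rangle = \langle \widehat{\Pi} \mathcal{P} y, \mathcal{P} y \rangle \geq 0$, so $\Pi \geq 0$. Finally, since $\mathcal{P} \rho_\infty = 0$ (because $\mathrm{ker}(\mathcal{P}) = \mathrm{span}\{\rho_\infty\}$), we obtain $\Pi \rho_\infty = 0$ as required.

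For uniqueness, let $\Pi$ be any nonnegative self-adjoint solution to \eqref{eq:Riccati1} with $\Pi \rho_\infty = 0$. By Lemma \ref{lem:reductionRiccati} there exist $\gamma \in \mathbb{R}$ and a solution $\widehat{\Pi}' = \Theta \Pi I_\mathcal{P}$ of \eqref{eq:Riccati2} with $\Pi = \mathcal{P}^* \widehat{\Pi}' \mathcal{P} + \gamma \mathbbm{1} \mathbbm{1}^*$. Since $\Theta = I_\mathcal{P}^*$, we have $(\widehat{\Pi}')^* = I_\mathcal{P}^* \Pi^* \Theta^* = \Theta \Pi I_\mathcal{P} = \widehat{\Pi}'$, and $\langle \widehat{\Pi}' z, z \rangle_{\mathcal{Y}_\mathcal{P}} = \langle \Pi I_\mathcal{P} z, I_\mathcal{P} z \rangle \geq 0$, so $\widehat{\Pi}'$ is nonnegative self-adjoint; by uniqueness for \eqref{eq:Riccati2}, $\widehat{\Pi}' = \widehat{\Pi}$. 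Applying both sides of $\Pi = \mathcal{P}^* \widehat{\Pi} \mathcal{P} + \gamma \mathbbm{1} \mathbbm{1}^*$ to $\rho_\infty$ and using $\mathcal{P} \rho_\infty = 0$ together with $\langle \mathbbm{1}, \rho_\infty \rangle = \int_\Omega \rho_\infty \, \mathrm{d}x = 1$, we obtain $0 = \Pi \rho_\infty = \gamma \mathbbm{1}$, forcing $\gamma = 0$ and hence $\Pi = \mathcal{P}^* \widehat{\Pi} \mathcal{P}$.

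The main substantive work is already absorbed in Lemma \ref{lem:reductionRiccati} and in the classical Riccati existence/uniqueness theorem applied to \eqref{eq:Riccati2}. The only delicate point to verify is that self-adjointness and positivity are preserved under the correspondence $\Pi \leftrightarrow \widehat{\Pi}$, which comes down to the identity $\Theta = I_\mathcal{P}^*$ already noted in the text; once this is in hand, the argument is essentially algebraic.
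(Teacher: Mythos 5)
Your proposal is correct and follows essentially the same route as the paper: existence by setting $\Pi=\mathcal{P}^*\widehat{\Pi}\mathcal{P}$ for the unique nonnegative solution $\widehat{\Pi}$ of \eqref{eq:Riccati2}, and uniqueness by using Lemma \ref{lem:reductionRiccati} to write any candidate as $\mathcal{P}^*\widehat{\Pi}'\mathcal{P}+\gamma\mathbbm{1}\mathbbm{1}^*$, forcing $\gamma=0$ from $\Pi\rho_\infty=0$ and identifying $\widehat{\Pi}'=\widehat{\Pi}$ via nonnegativity. The only differences (ordering of the two uniqueness steps and the explicit check that $\Theta=I_\mathcal{P}^*$ transfers self-adjointness) are cosmetic.
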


\begin{proof}
Let $\Pi$ be defined by $\Pi= \mathcal{P}^* \widehat{\Pi} \mathcal{P}$, where $\widehat{\Pi}$ is the unique non-negative solution to \eqref{eq:Riccati2}. By Lemma \ref{lem:reductionRiccati}, $\Pi$ is a solution to \eqref{eq:Riccati1} and clearly, $\Pi$ is non-negative and $\Pi \rho_\infty= 0$.
Now, let $\Pi'$ be a non-negative self-adjoint operator, solution to \eqref{eq:Riccati1}, and such that $\Pi' \rho_\infty= 0$. By Lemma \ref{lem:reductionRiccati}, there exist an operator $\widehat{\Pi}'$, solution to \eqref{eq:Riccati2} and $\gamma \in \mathbb{R}$ such that $\Pi'= \mathcal{P}^* \widehat{\Pi}' \mathcal{P} + \gamma \mathbbm{1} \mathbbm{1}^*$. Since $\Pi' \rho_\infty= 0$, we have:
$
0= \mathcal{P}^* \widehat{\Pi}' \mathcal{P} \rho_\infty + \gamma \mathbbm{1} \mathbbm{1}^* \rho_\infty,
$
and therefore, $\gamma= 0$, since $\mathcal{P} \rho_\infty= 0$ and $\mathbbm{1}^* \rho_\infty \neq 0$.
Since $\Pi'$ is non-negative, we obtain that for all $y \in \mathcal{Y}_{\mathcal{P}}$,
$0 \leq \langle y, \Pi' y \rangle= \langle \mathcal{P}y, \widehat{\Pi}' \mathcal{P} y \rangle = \langle y, \widehat{\Pi}' y \rangle$, which proves that $\widehat{\Pi}'$ is non-negative. Therefore, $\widehat{\Pi}'= \widehat{\Pi}$ and $\Pi'= \mathcal{P}^* \widehat{\Pi} \mathcal{P}= \Pi$. Finally, $\Pi$ is the unique non-negative solution to \eqref{eq:Riccati1} such that $\Pi \rho_\infty = 0$.
\end{proof}

\begin{remark}
The Riccati equations \eqref{eq:Riccati1} and \eqref{eq:Riccati2} both provide the same feedback. Let $\Pi$ be a solution to \eqref{eq:Riccati1}, let $\widehat{\Pi}$ be a solution to \eqref{eq:Riccati2}, let $\gamma \in \mathbb{R}$ be such that $\Pi= \mathcal{P}^* \widehat{\Pi} \mathcal{P} + \gamma \mathbbm{1} \mathbbm{1}^*$. Then, for all $y \in \mathcal{Y}$,
\begin{equation*}
- \mathcal{B}^* \Pi y
= - \mathcal{B}^* \big( \mathcal{P}^* \widehat{\Pi} \mathcal{P} + \gamma \mathbbm{1} \mathbbm{1}^* \big)y
= - \mathcal{B}^* \mathcal{P}^* \widehat{\Pi} (\mathcal{P}y)
= - \widehat{\mathcal{B}}^* \widehat{\Pi}(\mathcal{P}y),
\end{equation*}
since $\mathcal{B}^* \mathbbm{1}= 0$ and $\widehat{\mathcal{B}}^*=\mathcal{B}^* \mathcal{P}^*$. The first and the last term of the above equation respectively correspond to the feedback controls associated with $\Pi$ and $\widehat{\Pi}$.
\end{remark}

\subsection{Local exponential stabilization of the nonlinear system}

In this section, we study the effect of the static state feedback law
$u=-\widehat{\mathcal{B}}^* \widehat{\Pi} y_\mathcal{P}$ when applied to the
nonlinear system
\begin{align*}
  \dot{y}_ \mathcal{P} &=  \widehat{\mathcal{A}} y_ \mathcal{P}  +
u\widehat{\mathcal{N}}y_ \mathcal{P}   +
 \widehat{\mathcal{B}}
u , \quad
y_ \mathcal{P}(0)= \mathcal{P}\rho_0.
\end{align*}
Since we are interested in local exponential stabilization results, let us
introduce the transformed state $z_{\mathcal{P}}=e^{\delta t} y_\mathcal{P}$
where $\delta$ is as in Subsection \ref{subsec:lin}. We then obtain the
transformed
system
\begin{align*}
    \dot{z}_{ \mathcal{P}} &=  ( \widehat{\mathcal{A}}  + \delta I )z_{
\mathcal{P}}  +
u\widehat{\mathcal{N}}z_ {\mathcal{P}}   +
 \widehat{\mathcal{B}}
 (e^{\delta t} u) , \quad
z_ {\mathcal{P}}(0)= \mathcal{P}\rho_0.
\end{align*}
As a consequence, our goal is a local stability result for the system
\begin{align*}
    \dot{z}_{ \mathcal{P}} &=  ( \widehat{\mathcal{A}}  + \delta I )z_{
\mathcal{P}}  - (\widehat{\mathcal{B}}^* \widehat{\Pi} z_{\mathcal{P}})
{\widehat{\mathcal{N}}_\delta}z_
{\mathcal{P}}   -
  \widehat{\mathcal{B}}( \widehat{\mathcal{B}}^* \widehat{\Pi}z_
{\mathcal{P}}  ), \quad
z_ {\mathcal{P},\delta}(0)= \mathcal{P}\rho_0.
\end{align*}
where $\widehat{\mathcal{N}}_\delta = e^{-\delta t} \widehat{\mathcal{N}}.$
Using once more the notation
$\widehat{\mathcal{A}}_\Pi= \widehat{\mathcal{A}} +\delta I
- \widehat{\mathcal{B}} \widehat{\mathcal{B}}^* \widehat{\Pi},$ let us first
consider the following nonhomogeneous system
\begin{align}\label{eq:nonhomg}
  \dot{z}_\mathcal{P} = \widehat{\mathcal{A}}_\Pi z_\mathcal{P}
+ f, \quad z_\mathcal{P}(0) =\mathcal{P} \rho_0.
\end{align}
Since $\widehat{\mathcal{B}}\widehat{\mathcal{B}}^* \Pi \in
\mathcal{L}(\mathcal{Y}_\mathcal{P})$ we conclude that
\begin{equation}\label{eq:dom_ops}
 \begin{aligned}
\mathcal{D}(\widehat{
\mathcal{A}}_\Pi ) &= \mathcal{D}(\widehat{\mathcal{A}}) =
\mathcal{D}(\mathcal{A}) \cap
\mathcal{Y}_\mathcal{P}    , \quad
\mathcal{D}(\widehat{
\mathcal{A}}^*_\Pi )  = \mathcal{D}(\widehat{A}^*) = \mathcal{D}(\mathcal{A}^*)
\cap \mathcal{Y}_\mathcal{P} .
 \end{aligned}
\end{equation}
For the following calculus of interpolation spaces, assume that $\lambda \in
\mathbb R$ in the resolvent set of $\mathcal{A}$ is chosen such that the
fractional powers of $\mathcal{A}_\lambda:= (\lambda I - \mathcal{A})$ are
well-defined. From \cite[Section 1.17.1]{Tri78}, it follows that
\begin{align*}
  [\mathcal{D}(\mathcal{A}_\lambda)\cap
\mathcal{Y}_\mathcal{P},\mathcal{Y}\cap
\mathcal{Y}_\mathcal{P}]_\alpha &=
[\mathcal{D}(\mathcal{A}_\lambda),\mathcal{Y}]_\alpha \cap
\mathcal{Y}_\mathcal{P},  \\
[\mathcal{D}(\mathcal{A}^*_\lambda)\cap
\mathcal{Y}_\mathcal{P},\mathcal{Y}\cap
\mathcal{Y}_\mathcal{P}]_\alpha &=
[\mathcal{D}(\mathcal{A}^*_\lambda),\mathcal{Y}]_\alpha \cap
\mathcal{Y}_\mathcal{P} .
\end{align*}
According to \cite[Appendix 3A]{LasT00}, for $\alpha=\frac{1}{2},$ we can
identify the above interpolation spaces as follows
\begin{align*}
  [\mathcal{D}(\mathcal{A} _\lambda),\mathcal{Y}]_{\frac{1}{2}} \cap
\mathcal{Y}_\mathcal{P} = H^1(\Omega)\cap \mathcal{Y}_\mathcal{P} =
[\mathcal{D}(\mathcal{A}^*_\lambda),\mathcal{Y}]_{\frac{1}{2}} \cap
\mathcal{Y}_\mathcal{P}.
\end{align*}
Moreover, with \cite[Volume I, Section 12]{LioM72} it holds that
\begin{align*}
  [ [\mathcal{D}(\mathcal{A}_\lambda),\mathcal{Y}_\mathcal{P} ]_{\frac{1}{2}},
[\mathcal{D}(\mathcal{A}_\lambda^*),\mathcal{Y}_\mathcal{P}  ]'_{\frac{1}{2}}
]_{\frac{1}{2}} = \mathcal{Y}_\mathcal{P}.
\end{align*}
For the following result, let us introduce the space
\begin{align*}
  W_\mathcal{P}(Q_\infty):= L^2(0,\infty;H^1(\Omega) \cap
\mathcal{Y}_\mathcal{P}) \cap
H^1(0,\infty;[H^1(\Omega)\cap \mathcal{Y}_\mathcal{P}]')
\end{align*}
endowed with the norm
\begin{align*}
  \| y_\mathcal{P} \|_{W_\mathcal{P}(Q_\infty)} = \left(\int_0^\infty \|
y_\mathcal{P} \|_{H^1(\Omega)}^2 + \| y'_\mathcal{P} \|_{[H^1(\Omega)]'}^2 \;
\mathrm{d}t \right)^{\frac{1}{2}}.
\end{align*}
Based on known regularity results for analytic semigroups, we now have.
\begin{theorem}\label{thm:reg_nonh}
 Let $f\in L^2(0,\infty;[H^1(\Omega)\cap \mathcal{Y}_\mathcal{P}]')$ and
$\rho_0 \in L^2(\Omega)$ be given. Then there exists a unique mild solution
$z_\mathcal{P}\in W_\mathcal{P}(Q_\infty)$ to \eqref{eq:nonhomg} satisfying
\begin{align*}
 \| z_\mathcal{P} \| _{W_\mathcal{P}(Q_\infty)} \le C ( \| f\|_{L^2(0,\infty;
[H^1(\Omega) ]')} + \| \rho_0 \|_{L^2(\Omega)} ) .
\end{align*}
In particular, $z_\mathcal{P} \in C_b([0,\infty);\mathcal{Y}_\mathcal{P}).$
\end{theorem}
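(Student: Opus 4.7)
The plan is to exploit the fact that $\widehat{\mathcal{A}}_\Pi$ generates an exponentially stable analytic semigroup on $\mathcal{Y}_\mathcal{P}$ (which follows from the $\delta$-stabilizability established in Lemma \ref{lem:stabilizability} together with the Riccati theory cited around the definition of $\widehat{\mathcal{A}}_\Pi$) and to apply maximal $L^2$-regularity for analytic semigroups on Hilbert space. Because $\widehat{\mathcal{B}} \widehat{\mathcal{B}}^* \widehat{\Pi}$ is bounded on $\mathcal{Y}_\mathcal{P}$, the analyticity, the domains $\mathcal{D}(\widehat{\mathcal{A}}_\Pi)$, $\mathcal{D}(\widehat{\mathcal{A}}_\Pi^*)$ and the associated interpolation scales coincide with those of $\widehat{\mathcal{A}}$ (this is precisely \eqref{eq:dom_ops}); in particular the $1/2$–interpolation spaces are $H^1(\Omega)\cap \mathcal{Y}_\mathcal{P}$ by the identities already recorded above the statement.

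First I would extend $\widehat{\mathcal{A}}_\Pi$ to the extrapolation space $[H^1(\Omega)\cap \mathcal{Y}_\mathcal{P}]'$ using the adjoint realization: since $\widehat{\mathcal{A}}_\Pi^*$ is the generator of an exponentially stable analytic semigroup on $\mathcal{Y}_\mathcal{P}$ whose domain embeds continuously into $H^1(\Omega)\cap \mathcal{Y}_\mathcal{P}$, one can define, by transposition, an extended generator on $[H^1(\Omega)\cap \mathcal{Y}_\mathcal{P}]'$. On that space the mild solution is given by the variation-of-constants formula
\begin{equation*}
z_\mathcal{P}(t) = e^{t \widehat{\mathcal{A}}_\Pi} \mathcal{P}\rho_0 + \int_0^t e^{(t-s)\widehat{\mathcal{A}}_\Pi} f(s)\,\mathrm{d}s.
\end{equation*}
Uniqueness is immediate once the space in which the equation is posed is pinned down.

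Next I would derive the two-sided estimate $\|z_\mathcal{P}\|_{W_\mathcal{P}(Q_\infty)} \le C(\|f\|_{L^2(0,\infty;[H^1(\Omega)\cap \mathcal{Y}_\mathcal{P}]')} + \|\rho_0\|_{L^2(\Omega)})$ in two pieces. The homogeneous part $t \mapsto e^{t\widehat{\mathcal{A}}_\Pi}\mathcal{P}\rho_0$ lies in $L^2(0,\infty;H^1(\Omega)\cap \mathcal{Y}_\mathcal{P})$ with norm controlled by $\|\rho_0\|_{L^2(\Omega)}$ by the standard analytic-semigroup bound $\|(-\widehat{\mathcal{A}}_\Pi)^{1/2} e^{t\widehat{\mathcal{A}}_\Pi}\|\le C t^{-1/2}$ combined with exponential decay at infinity, and the $H^1(0,\infty;[H^1(\Omega)\cap \mathcal{Y}_\mathcal{P}]')$ part is obtained by differentiating and using $\widehat{\mathcal{A}}_\Pi \in \mathcal{L}(H^1(\Omega)\cap \mathcal{Y}_\mathcal{P},[H^1(\Omega)\cap \mathcal{Y}_\mathcal{P}]')$. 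For the convolution with $f$, I would invoke the maximal $L^2$-regularity result of de Simon (valid for every analytic semigroup on a Hilbert space): it yields the estimate with a finite-horizon constant, and exponential stability upgrades this to the infinite horizon by a standard semigroup argument (convolve with the decaying kernel, use Young's inequality for the tails). The continuous embedding $W_\mathcal{P}(Q_\infty)\hookrightarrow C_b([0,\infty);\mathcal{Y}_\mathcal{P})$, hence the last assertion, then follows from the Lions–Magenes trace theorem, which applies thanks to the reiteration identity $\bigl[[\mathcal{D}(\mathcal{A}_\lambda),\mathcal{Y}_\mathcal{P}]_{1/2},[\mathcal{D}(\mathcal{A}_\lambda^*),\mathcal{Y}_\mathcal{P}]'_{1/2}\bigr]_{1/2}=\mathcal{Y}_\mathcal{P}$ recalled just before the statement.

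The main obstacle I expect is the bookkeeping required to justify the extrapolation to $[H^1(\Omega)\cap \mathcal{Y}_\mathcal{P}]'$ cleanly: one has to check that the extended generator indeed generates an exponentially stable analytic semigroup on the dual interpolation space and that the variation-of-constants integral is a well-defined Bochner integral for $f$ only in $L^2(0,\infty;[H^1(\Omega)\cap \mathcal{Y}_\mathcal{P}]')$. Once this functional-analytic setup is in place, maximal regularity and the embedding into $C_b$ are largely mechanical.
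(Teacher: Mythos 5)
Your proposal is correct and follows essentially the same route as the paper, which simply invokes maximal $L^2$-regularity for the exponentially stable analytic semigroup generated by $\widehat{\mathcal{A}}_\Pi$ (via \cite[Chapter 3, Theorem 2.2]{Benetal07}) together with the interpolation-space identifications recorded before the statement and the Lions--Magenes trace theorem \cite[Volume I, Theorem 4.2]{LioM72} for the embedding into $C_b([0,\infty);\mathcal{Y}_\mathcal{P})$. Your write-up merely unpacks the extrapolation to $[H^1(\Omega)\cap\mathcal{Y}_\mathcal{P}]'$ and the finite-to-infinite horizon step that those citations subsume.
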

 \begin{proof}
  The result immediately follows from \cite[Chapter 3, Theorem
2.2]{Benetal07} and \cite[Volume I, Theorem 4.2]{LioM72} together with the
given characterizations of the involved interpolation spaces.
 \end{proof}

 The next lemma will be used in the following theorem.

\begin{lemma}\label{lem:nonl_lip}
  Let $y_\mathcal{P},z_\mathcal{P} \in W_\mathcal{P}(Q_\infty).$ Then
\begin{align*}
&  \left\|(\widehat{\mathcal{B}}^* \widehat{\Pi} y_\mathcal{P} )
\widehat{\mathcal{N}}_\delta y_\mathcal{P}
- (\widehat{\mathcal{B}}^* \widehat{\Pi} z_\mathcal{P} )
\widehat{\mathcal{N}}_\delta z_\mathcal{P}
\right\|_{L^2(0,\infty;[H^1(\Omega) ]')} \\
& \hspace{3cm} \le \widetilde{C} (\|
y_\mathcal{P} \|_{W_\mathcal{P}(Q_\infty)} + \| z_\mathcal{P}
\|_{W_\mathcal{P}(Q_\infty)} ) \| y_\mathcal{P} -
z_\mathcal{P}\|_{W_\mathcal{P}(Q_\infty)}.
\end{align*}
\end{lemma}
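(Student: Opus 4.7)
The approach is to exploit the bilinear structure of the nonlinear term. The product $(\widehat{\mathcal{B}}^* \widehat{\Pi} y_\mathcal{P})\widehat{\mathcal{N}}_\delta y_\mathcal{P}$ is the product of a scalar factor in $t$ with an element of $[H^1(\Omega)]'$, and both factors depend linearly on $y_\mathcal{P}$. So the plan is first to split
\begin{equation*}
(\widehat{\mathcal{B}}^* \widehat{\Pi} y_\mathcal{P})\widehat{\mathcal{N}}_\delta y_\mathcal{P} - (\widehat{\mathcal{B}}^* \widehat{\Pi} z_\mathcal{P})\widehat{\mathcal{N}}_\delta z_\mathcal{P}
= \bigl(\widehat{\mathcal{B}}^* \widehat{\Pi}(y_\mathcal{P}-z_\mathcal{P})\bigr)\widehat{\mathcal{N}}_\delta y_\mathcal{P}
+ \bigl(\widehat{\mathcal{B}}^* \widehat{\Pi} z_\mathcal{P}\bigr)\widehat{\mathcal{N}}_\delta (y_\mathcal{P}-z_\mathcal{P}),
\end{equation*}
and to bound each term separately by $\widetilde C \| y_\mathcal{P} \|_{W_\mathcal{P}(Q_\infty)} \| y_\mathcal{P} - z_\mathcal{P}\|_{W_\mathcal{P}(Q_\infty)}$ and $\widetilde C \| z_\mathcal{P} \|_{W_\mathcal{P}(Q_\infty)} \| y_\mathcal{P} - z_\mathcal{P}\|_{W_\mathcal{P}(Q_\infty)}$ respectively.

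For the scalar factor, $\widehat{\mathcal{B}}^* \widehat{\Pi} \in \mathcal{L}(\mathcal{Y}_\mathcal{P},\mathbb R)$, so
$\bigl|\widehat{\mathcal{B}}^* \widehat{\Pi} w(t)\bigr| \le C \| w(t) \|_{L^2(\Omega)}$ for all $t\ge 0$. Combining this with the continuous embedding $W_\mathcal{P}(Q_\infty) \hookrightarrow C_b([0,\infty);\mathcal{Y}_\mathcal{P})$ from Theorem \ref{thm:reg_nonh}, I get an $L^\infty_t$ bound on the scalar factor by the $W_\mathcal{P}(Q_\infty)$-norm. For the $[H^1(\Omega)]'$-valued factor, integrating by parts and using $\nabla\alpha\cdot\vec n = 0$ on $\Gamma$ one has
\begin{equation*}
\langle \widehat{\mathcal{N}} w, v \rangle = -\int_\Omega w\, \nabla\alpha\cdot\nabla v\,\mathrm{d}x,
\end{equation*}
so $\|\widehat{\mathcal{N}} w\|_{[H^1(\Omega)]'} \le \|\nabla\alpha\|_{L^\infty}\| w\|_{L^2(\Omega)}$, and since $|e^{-\delta t}|\le 1$, also $\|\widehat{\mathcal{N}}_\delta w\|_{[H^1(\Omega)]'} \le C\| w\|_{L^2(\Omega)}$.

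Putting these ingredients together, for the first term I estimate
\begin{equation*}
\int_0^\infty \bigl|\widehat{\mathcal{B}}^* \widehat{\Pi}(y_\mathcal{P}-z_\mathcal{P})(t)\bigr|^2 \|\widehat{\mathcal{N}}_\delta y_\mathcal{P}(t)\|^2_{[H^1]'}\,\mathrm{d}t
\le C \| y_\mathcal{P}-z_\mathcal{P}\|^2_{C_b(\mathcal{Y}_\mathcal{P})} \int_0^\infty \| y_\mathcal{P}(t)\|^2_{L^2(\Omega)}\,\mathrm{d}t,
\end{equation*}
which by the embedding of $W_\mathcal{P}(Q_\infty)$ into both $C_b([0,\infty);\mathcal{Y}_\mathcal{P})$ and $L^2(0,\infty; H^1(\Omega)\cap \mathcal{Y}_\mathcal{P})$ is controlled by $C \| y_\mathcal{P}-z_\mathcal{P}\|^2_{W_\mathcal{P}(Q_\infty)} \| y_\mathcal{P}\|^2_{W_\mathcal{P}(Q_\infty)}$. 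The second term is treated identically with the roles of the two factors swapped, giving the symmetric contribution in $\| z_\mathcal{P}\|_{W_\mathcal{P}(Q_\infty)}$. Adding the two estimates yields the stated bound.

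No step is really delicate; the only small point to be careful about is the choice of norm on the scalar factor. Putting the scalar factor in $L^\infty_t$ (via the continuous embedding of Theorem \ref{thm:reg_nonh}) and the differential factor in $L^2_t([H^1]')$ is what produces the required quadratic-times-linear structure $(\| y\|+\| z\|)\| y-z\|$. Using instead $L^2_t$ for the scalar factor would not give the right estimate because there is no $L^\infty$ control on $\|\widehat{\mathcal{N}}_\delta w(t)\|_{[H^1]'}$ available from $W_\mathcal{P}(Q_\infty)$.
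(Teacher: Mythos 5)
Your proof is correct and follows essentially the same route as the paper: the same bilinear splitting into $\bigl(\widehat{\mathcal{B}}^* \widehat{\Pi}(y_\mathcal{P}-z_\mathcal{P})\bigr)\widehat{\mathcal{N}}_\delta y_\mathcal{P} + \bigl(\widehat{\mathcal{B}}^* \widehat{\Pi} z_\mathcal{P}\bigr)\widehat{\mathcal{N}}_\delta (y_\mathcal{P}-z_\mathcal{P})$, the same extension of $\widehat{\mathcal{N}}$ to a bounded map $\mathcal{Y}_\mathcal{P}\to[H^1(\Omega)\cap\mathcal{Y}_\mathcal{P}]'$ (which you justify more explicitly via integration by parts and $\nabla\alpha\cdot\vec n=0$), and the same pairing of the embeddings $W_\mathcal{P}(Q_\infty)\hookrightarrow C_b([0,\infty);\mathcal{Y}_\mathcal{P})$ and $W_\mathcal{P}(Q_\infty)\hookrightarrow L^2(0,\infty;H^1(\Omega)\cap\mathcal{Y}_\mathcal{P})$; the only (immaterial) difference is which of the two factors receives the $L^\infty_t$ bound and which the $L^2_t$ bound. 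Your closing remark is the one inaccuracy: since $\|\widehat{\mathcal{N}}_\delta w(t)\|_{[H^1]'}\le C\|w(t)\|_{L^2(\Omega)}$ and $W_\mathcal{P}(Q_\infty)$ embeds into $C_b([0,\infty);L^2(\Omega))$, there \emph{is} $L^\infty_t$ control on the differential factor, and the opposite assignment (sup on $\widehat{\mathcal{N}}_\delta y_\mathcal{P}$, $L^2_t$ on the scalar factor) works equally well --- it is in fact what the paper does.
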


\begin{proof}
  First note that we can extend the operator $\widehat{\mathcal{N}}\colon
H^1(\Omega) \cap \mathcal{Y}_\mathcal{P} \to \mathcal{Y}_\mathcal{P}$
to a continuous linear operator $\widehat{\mathcal{N}} \colon
\mathcal{Y}_\mathcal{P} \to [H^1(\Omega)\cap \mathcal{Y}_\mathcal{P}]'.$ For
$y_\mathcal{P} ,z_\mathcal{P} \in
W_\mathcal{P}(Q_\infty),$ we have
\begin{align*}
&  \left\|(\widehat{\mathcal{B}}^* \widehat{\Pi} y_\mathcal{P} )
\widehat{\mathcal{N}}_\delta y_\mathcal{P}
- (\widehat{\mathcal{B}}^* \widehat{\Pi} z_\mathcal{P} )
\widehat{\mathcal{N}}_\delta z_\mathcal{P}
\right\|_{L^2(0,\infty;[H^1(\Omega)  ]')} \\[1ex]
&\ \ \le \widetilde{C}\left\|(\widehat{\mathcal{B}}^*
\widehat{\Pi} (y_\mathcal{P}-z_\mathcal{P}) )
\widehat{\mathcal{N}} y_\mathcal{P}
\right\|_{L^2(0,\infty;[H^1(\Omega) ]')}
+\widetilde{C}\left\|(\widehat{\mathcal{B}}^* \widehat{\Pi}
z_\mathcal{P} )
\widehat{\mathcal{N}} (y_\mathcal{P}-z_\mathcal{P})
\right\|_{L^2(0,\infty;[H^1(\Omega)  ]')}.
\end{align*}
For the first term, it holds that
\begin{align*}
 &\left\|(\widehat{\mathcal{B}}^*
\widehat{\Pi} (y_\mathcal{P}-z_\mathcal{P}) )
\widehat{\mathcal{N}} y_\mathcal{P}
\right\|_{L^2(0,\infty;[H^1(\Omega) ]')} ^2   = \int_0^\infty | \langle
\widehat{\Pi} \widehat{\mathcal{N}}\rho_\infty,y_\mathcal{P}-z_\mathcal{P}
\rangle |^2 \|
\widehat{\mathcal{N}} y_\mathcal{P} \| _{[H^1(\Omega)  ]'}^2 \; \mathrm{d}t \\
&\quad \le \widetilde{C} \int_0^\infty \| y_\mathcal{P} - z_\mathcal{P}
\|_{L^2(\Omega)}^2 \| y_\mathcal{P} \|_{L^2(\Omega)}^2 \; \mathrm{d}t \le
\widetilde{C} (\sup_{t \in [0,\infty)} \| y_\mathcal{P} \|_{L^2(\Omega)} )^2
  \| y_\mathcal{P} - z_\mathcal{P} \|^2_{W_\mathcal{P}(Q_\infty)}.
 \end{align*}
With \cite[Volume I, Theorem 4.2]{LioM72} this yields
\begin{align*}
  \left\|(\widehat{\mathcal{B}}^*
\widehat{\Pi} (y_\mathcal{P}-z_\mathcal{P}) )
\widehat{\mathcal{N}} y_\mathcal{P}
\right\|_{L^2(0,\infty;[H^1(\Omega) ]')} \le \widetilde{C}
  \| y_\mathcal{P} \|_{W_\mathcal{P}(Q_\infty)}
  \| y_\mathcal{P} - z_\mathcal{P} \|_{W_\mathcal{P}(Q_\infty)}.
\end{align*}
Similarly, we continue with
\begin{align*}
 & \left\|(\widehat{\mathcal{B}}^* \widehat{\Pi}
z_\mathcal{P} )
\widehat{\mathcal{N}} (y_\mathcal{P}-z_\mathcal{P})
\right\|_{L^2(0,\infty;[H^1(\Omega)  ]')} = \int_0^\infty | \langle
\widehat{\Pi} \widehat{\mathcal{B}}, z_\mathcal{P} \rangle |^2 \|
\widehat{\mathcal{N}}( y_\mathcal{P}- z_\mathcal{P}) \| _{[H^1(\Omega)  ]'}^2
\; \mathrm{d}t \\
&\quad \le \widetilde{C} \int_0^\infty \| z_\mathcal{P} \|_{L^2(\Omega)}^2   \|
y_\mathcal{P} - z_\mathcal{P}
\|_{L^2(\Omega)}^2  \; \mathrm{d}t \le
\widetilde{C} (\sup_{t \in [0,\infty)} \| z_\mathcal{P} \|_{L^2(\Omega)} )^2
  \| y_\mathcal{P} - z_\mathcal{P} \|^2_{W_\mathcal{P}(Q_\infty)}.
\end{align*}
As before, this leads to
\begin{align*}
  \left\|(\widehat{\mathcal{B}}^* \widehat{\Pi}
z_\mathcal{P} )
\widehat{\mathcal{N}} (y_\mathcal{P}-z_\mathcal{P})
\right\|_{L^2(0,\infty;[H^1(\Omega)  ]')} \le \widetilde{C}
  \| z_\mathcal{P} \|_{W_\mathcal{P}(Q_\infty)}
  \| y_\mathcal{P} - z_\mathcal{P} \|_{W_\mathcal{P}(Q_\infty)}.
\end{align*}
Combining both estimates shows the assertion.
\end{proof}

\begin{theorem}\label{thm:loc_stab}
   Let $C$ and $\widetilde{C}$ denote the constants from Theorem
\ref{thm:reg_nonh} and Lemma \ref{lem:nonl_lip}, respectively. If  $\| \rho _0
\| _{L^2(\Omega)} \le \frac{3}{16 C^2 \widetilde{C}}, $ then
\begin{align*}
  \dot{z}_\mathcal{P}&= \widehat{\mathcal{A}}_{\Pi} z_\mathcal{P}  -
(\widehat{\mathcal{B}}^* \widehat{\Pi} z_\mathcal{P} )
\widehat{\mathcal{N}}_\delta z_\mathcal{P} , \quad z_\mathcal{P}(0) =
\mathcal{P}\rho_0,
\end{align*}
admits a unique solution $z_\mathcal{P} \in W_\mathcal{P}(Q_\infty)$ satisfying
 \begin{align*}
   \| z_\mathcal{P} \| _{W_\mathcal{P}(Q_\infty)} \le \frac{1}{4 C
\widetilde{C} }.
 \end{align*}
\end{theorem}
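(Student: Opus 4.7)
The plan is to set up a standard contraction mapping argument on a ball of radius $R = \frac{1}{4C\widetilde{C}}$ in the space $W_\mathcal{P}(Q_\infty)$. Define the map $\Lambda \colon W_\mathcal{P}(Q_\infty) \to W_\mathcal{P}(Q_\infty)$ by $\Lambda(\zeta) = z_\mathcal{P}$, where $z_\mathcal{P}$ is the unique mild solution (provided by Theorem \ref{thm:reg_nonh}) to the linear nonhomogeneous problem
\begin{equation*}
\dot{z}_\mathcal{P} = \widehat{\mathcal{A}}_\Pi z_\mathcal{P} + f(\zeta), \quad z_\mathcal{P}(0) = \mathcal{P}\rho_0,
\end{equation*}
with forcing $f(\zeta) = -(\widehat{\mathcal{B}}^* \widehat{\Pi} \zeta)\, \widehat{\mathcal{N}}_\delta \zeta$. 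A fixed point of $\Lambda$ is precisely a solution to the closed-loop equation in the statement.

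First I would verify that $\Lambda$ maps the closed ball $B_R = \{\zeta \in W_\mathcal{P}(Q_\infty) \colon \|\zeta\|_{W_\mathcal{P}(Q_\infty)} \le R\}$ into itself. Applying Lemma \ref{lem:nonl_lip} with $z_\mathcal{P} = 0$ (noting the trivial solution corresponds to zero forcing) gives $\|f(\zeta)\|_{L^2(0,\infty;[H^1(\Omega)]')} \le \widetilde{C}\|\zeta\|_{W_\mathcal{P}(Q_\infty)}^2$. Combining with Theorem \ref{thm:reg_nonh},
\begin{equation*}
\|\Lambda(\zeta)\|_{W_\mathcal{P}(Q_\infty)} \le C\bigl(\widetilde{C} R^2 + \|\rho_0\|_{L^2(\Omega)}\bigr) \le C\widetilde{C}\,\frac{1}{16 C^2 \widetilde{C}^2} + C\cdot \frac{3}{16 C^2 \widetilde{C}} = \frac{1}{16 C\widetilde{C}} + \frac{3}{16 C\widetilde{C}} = R,
\end{equation*}
so $\Lambda(B_R) \subset B_R$ under the smallness hypothesis on $\|\rho_0\|$.

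Next I would check the contraction property. For $\zeta_1,\zeta_2 \in B_R$, linearity of the solution map in the forcing term combined with Theorem \ref{thm:reg_nonh} (applied with zero initial datum) and Lemma \ref{lem:nonl_lip} yields
\begin{equation*}
\|\Lambda(\zeta_1) - \Lambda(\zeta_2)\|_{W_\mathcal{P}(Q_\infty)} \le C\,\widetilde{C}\bigl(\|\zeta_1\|_{W_\mathcal{P}(Q_\infty)} + \|\zeta_2\|_{W_\mathcal{P}(Q_\infty)}\bigr)\|\zeta_1 - \zeta_2\|_{W_\mathcal{P}(Q_\infty)} \le 2C\widetilde{C}R\,\|\zeta_1 - \zeta_2\|_{W_\mathcal{P}(Q_\infty)} = \tfrac{1}{2}\|\zeta_1 - \zeta_2\|_{W_\mathcal{P}(Q_\infty)}.
\end{equation*}
The Banach fixed point theorem then provides a unique $z_\mathcal{P} \in B_R$ with $\Lambda(z_\mathcal{P}) = z_\mathcal{P}$, giving both existence and the claimed bound $\|z_\mathcal{P}\|_{W_\mathcal{P}(Q_\infty)} \le R = \frac{1}{4C\widetilde{C}}$. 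Uniqueness within $W_\mathcal{P}(Q_\infty)$ (not merely within $B_R$) follows from a standard Gronwall argument applied to the difference of two potential solutions, using that any solution is continuous into $\mathcal{Y}_\mathcal{P}$ and that the nonlinearity is locally Lipschitz on bounded sets.

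The only delicate point is ensuring that the feedback $u = -\widehat{\mathcal{B}}^* \widehat{\Pi} z_\mathcal{P}$ inherits enough regularity for $f(\zeta)$ to live in $L^2(0,\infty;[H^1(\Omega)]')$; this is precisely what Lemma \ref{lem:nonl_lip} encodes, using the boundedness of $\widehat{\Pi}$ and the extension of $\widehat{\mathcal{N}}$ established in its proof. Everything else is routine bookkeeping with the constants $C$ and $\widetilde{C}$ chosen so that the quadratic inequality $C\widetilde{C}R^2 + C\|\rho_0\| \le R$ is tight at $R = \frac{1}{4C\widetilde{C}}$ and $\|\rho_0\| = \frac{3}{16 C^2 \widetilde{C}}$.
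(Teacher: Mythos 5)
Your proposal is correct and follows essentially the same route as the paper: a Banach fixed-point argument for the map $\zeta \mapsto z_\mathcal{P}$ solving the linear nonhomogeneous problem of Theorem \ref{thm:reg_nonh} with forcing $-(\widehat{\mathcal{B}}^*\widehat{\Pi}\zeta)\widehat{\mathcal{N}}_\delta\zeta$, with the self-map and contraction estimates obtained exactly as you compute them from Lemma \ref{lem:nonl_lip}. Your closing remark on uniqueness in all of $W_\mathcal{P}(Q_\infty)$ (rather than only in the ball) is a small addition beyond what the paper records, but the core argument is identical.
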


\begin{proof}
  We are going to show the assertion by a fixed point argument. For this
purpose,
consider the mapping $\mathcal{F}\colon W_\mathcal{P}(Q_\infty) \to
W_\mathcal{P}(Q_\infty),  w_\mathcal{P} \mapsto
z_{\mathcal{P},w}$ defined by
\begin{align*}
  \dot{z}_{\mathcal{P},w}&= \widehat{\mathcal{A}}_{\Pi} z_{\mathcal{P},w}  -
(\widehat{\mathcal{B}}^* \widehat{\Pi} w_\mathcal{P} )
\widehat{\mathcal{N}}_\delta w_\mathcal{P} , \quad z_{\mathcal{P},w}(0) =
\mathcal{P}\rho_0.
\end{align*}
Let $w_\mathcal{P} \in W_\mathcal{P}(Q_\infty)$ such that $\| w_\mathcal{P} \|
_{W_\mathcal{P}(Q_\infty)} \le \frac{1}{4C\widetilde{C}}.$
Lemma \ref{lem:nonl_lip} then implies that
\begin{align*}
  \left\| (\widehat{\mathcal{B}}^* \widehat{\Pi} w_\mathcal{P})
\widehat{\mathcal{N}}_{\delta} w_\mathcal{P} \right\|
_{L^2(0,\infty;[H^1(\Omega)]') }\le \frac{1}{16 C^2 \widetilde{C}}.
\end{align*}
With Theorem \ref{thm:reg_nonh} we conclude that the corresponding solution
satisfies
\begin{align*}
  \|z_{\mathcal{P},w}\|_{W_\mathcal{P}(Q_\infty)} \le C \left( \frac{1}{16C^2
\widetilde{C}} + \frac{3}{16 C^2 \widetilde{C} } \right) =
\frac{1}{4C\widetilde{C}}.
\end{align*}
Similarly, for $w_{\mathcal{P},1},w_{\mathcal{P},2} \in
W_\mathcal{P}(Q_\infty)$ with $\| w_{\mathcal{P},i} \|
_{W_\mathcal{P}(Q_\infty)} \le \frac{1}{4C\widetilde{C}}, i=1,2,$ the
associated solutions solutions $z_{\mathcal{P},w_1}$ and $z_{\mathcal{P},w_2}$
fulfill
\begin{align*}
\dot{ z}_{\mathcal{P},w_1} - \dot{ z}_{\mathcal{P},w_2} &=
\widehat{\mathcal{A}}_\Pi (z_{\mathcal{P},w_1} - z_{\mathcal{P},w_2} ) +
(\widehat{\mathcal{B}}^* \widehat{\Pi} w_{\mathcal{P},2} )
\widehat{\mathcal{N}}_\delta w_{\mathcal{P},2} - (\widehat{\mathcal{B}}^*
\widehat{\Pi} w_{\mathcal{P},1} )
\widehat{\mathcal{N}}_\delta w_{\mathcal{P},1} \\
 z_{\mathcal{P},w_1}(0) -
z_{\mathcal{P},w_2}(0) &= 0.
\end{align*}
Hence, Theorem \ref{thm:reg_nonh} yields
\begin{align*}
   \|z_{\mathcal{P},w_1}-z_{\mathcal{P},w_2}\|_{W_\mathcal{P}(Q_\infty)} \le C
\|(\widehat{\mathcal{B}}^*
\widehat{\Pi} w_{\mathcal{P},2} )  \widehat{\mathcal{N}}_\delta
w_{\mathcal{P},2} - (\widehat{\mathcal{B}}^*
\widehat{\Pi} w_{\mathcal{P},1} )
\widehat{\mathcal{N}}_\delta w_{\mathcal{P},1}\|_{L^2(0,\infty;[H^1(\Omega)]')}.
\end{align*}
Moreover, with Lemma \ref{thm:reg_nonh}, we obtain that
\begin{align*}
 \|z_{\mathcal{P},w_1}-z_{\mathcal{P},w_2}\|_{W_\mathcal{P}(Q_\infty)} \le 2 C
\widetilde{C} \left( \frac{1}{4C\widetilde{C}}\right)^2 = \frac{1}{2}
\frac{1}{4 C \widetilde{C} }.
\end{align*}
In other words, the mapping $\mathcal{F}$ is a contraction in the set
\begin{align*}
  \left\{ z_\mathcal{P} \in W_{\mathcal{P}}(Q_\infty) \left| \| z_\mathcal{P}
\|_{W_\mathcal{P}(Q_\infty)} \le \frac{1}{4C \widetilde{C}} \right. \right\}
\end{align*}
and the statement is shown.
\end{proof}

As a consequence of Theorem \ref{thm:loc_stab}, we have that $e^{\delta t}
y_\mathcal{P}\in W_\mathcal{P}(Q_\infty)$ implying that there exists a constant
$C$ such that $\| y_\mathcal{P} \| _{L^2(\Omega)} \le C e^{-\delta t} \|
\rho_0 \|_{L^2(\Omega)}.$

\section{A Lyapunov based feedback law}
\label{sec:lyap}

As an alternative to the Riccati based approach, in this section, we
propose a feedback law that allows to construct a \textit{global} Lyapunov
function for the nonlinear closed loop system. The idea is inspired by the
observations found in \cite{BalS79} for hyperbolic systems.

With the previously introduced notation, assume that $(\lambda_2,\psi_2)$
denotes the eigenpair of $\widehat{\mathcal{A}}$ associated to the first
nonzero eigenvalue. Hence, $\lambda_2$ determines the exponential decay rate
of the uncontrolled systems. Instead of using \eqref{eq:elliptic_alpha}, let
us determine
the control shape function $\alpha$ as a
solution to the elliptic equation
 \begin{equation}\label{eq:elliptic_alpha_Lyap}
\begin{aligned}
  \nabla \cdot (\rho_\infty \nabla \alpha) &=  \psi_2 && \text{in } \Omega, \\
(\rho_\infty \nabla \alpha )\cdot \vec{n} &= 0  && \text{on } \Gamma.
\end{aligned}
\end{equation}
As a consequence, this choice of $\alpha$ yields
$\widehat{\mathcal {B}}=\widehat{\mathcal{N}} \rho_\infty = \psi_2.$ Let
further $\mu >0 $ be chosen such that
\begin{equation}\label{eq:mu_resolvent}
\begin{aligned}
  \langle (\mu I - \widehat{\mathcal{A}}) y_\mathcal{P} , y_\mathcal{P} \rangle
_{L^2(\Omega)} &\ge \langle y_\mathcal{P}, y_\mathcal{P} \rangle_{H^1(\Omega)},
 \quad \text{for all } y_\mathcal{P} \in \mathcal{D}(\widehat{\mathcal{A}}).
\end{aligned}
\end{equation}
Since $\widehat{\mathcal{A}}$ generates an exponentially stable semigroup, it
is well-known \cite[Theorem 4.1.23]{CurZ95} that there exists a unique
self-adjoint nonnegative solution  $\Upsilon$ to the Lyapunov equation for
$y_\mathcal{P},z_\mathcal{P} \in \mathcal{D}(\widehat{\mathcal{A}})\colon$
 \begin{align}\label{eq:Lyap_eq}
   \langle \Upsilon y_\mathcal{P},\widehat{\mathcal {A}} z_\mathcal{P} \rangle
+ \langle  \widehat{\mathcal{A}}y_\mathcal{P},\Upsilon z_\mathcal{P} \rangle =
- 2\mu \langle y_\mathcal{P},z_\mathcal{P} \rangle .
 \end{align}
We then obtain the following result.
\begin{theorem}\label{thm:glob_stab}
  Let $\mu $ and $\Upsilon$ be as in \eqref{eq:mu_resolvent} and
\eqref{eq:Lyap_eq}, respectively. Consider the system
\begin{align}\label{eq:thm_glob_stab}
  \dot{y}_ \mathcal{P} &=  \widehat{\mathcal{A}} y_ \mathcal{P}  +
u\widehat{\mathcal{N}}y_ \mathcal{P}   +
 \widehat{\mathcal{B}}
u , \quad
y_ \mathcal{P}(0)= \mathcal{P}\rho_0.
\end{align}
where the control $u$ is defined by the feedback law
\begin{align*}
  u = -\langle \widehat{\mathcal{B}} + \widehat{\mathcal{N}} y_\mathcal{P},
\Upsilon y_\mathcal{P} + y_\mathcal{P}\rangle .
\end{align*}
Then the function $V(y_\mathcal{P}):= \langle y_\mathcal{P}, \Upsilon
y_\mathcal{P} + y_\mathcal{P} \rangle$ is a global Lyapunov function for
\eqref{eq:thm_glob_stab}.
\end{theorem}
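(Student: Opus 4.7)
The plan is to verify the two defining properties of a Lyapunov function: positivity of $V$ on $\mathcal{Y}_\mathcal{P} \setminus \{0\}$, and strict decrease along trajectories of \eqref{eq:thm_glob_stab}. Positivity is immediate from the construction of $\Upsilon$: since $\Upsilon$ is self-adjoint and nonnegative, and we have added the term $\langle y_\mathcal{P},y_\mathcal{P}\rangle$, we get $V(y_\mathcal{P}) \geq \|y_\mathcal{P}\|_{L^2(\Omega)}^2$, so $V$ is coercive and vanishes only at $y_\mathcal{P}=0$. The main work is the Lyapunov derivative identity.

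The core calculation is to differentiate $V(y_\mathcal{P}(t))$ along a trajectory. Since $\Upsilon+I$ is self-adjoint on $\mathcal{Y}_\mathcal{P}$, we have, at least formally,
\begin{align*}
\frac{\mathrm{d}}{\mathrm{d}t} V(y_\mathcal{P}) = 2 \langle \dot{y}_\mathcal{P}, \Upsilon y_\mathcal{P} + y_\mathcal{P}\rangle
= 2\langle \widehat{\mathcal{A}}y_\mathcal{P},\Upsilon y_\mathcal{P}+y_\mathcal{P}\rangle + 2u\,\langle \widehat{\mathcal{B}}+\widehat{\mathcal{N}}y_\mathcal{P}, \Upsilon y_\mathcal{P}+y_\mathcal{P}\rangle.
\end{align*}
The last inner product is exactly $-u$ by the definition of the feedback, so that term contributes $-2u^2$. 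For the first term I would split it into the two pieces $\langle \widehat{\mathcal{A}}y_\mathcal{P},\Upsilon y_\mathcal{P}\rangle$ and $\langle \widehat{\mathcal{A}}y_\mathcal{P},y_\mathcal{P}\rangle$. Applying \eqref{eq:Lyap_eq} with $y_\mathcal{P}=z_\mathcal{P}$ gives $2\langle \widehat{\mathcal{A}}y_\mathcal{P},\Upsilon y_\mathcal{P}\rangle=-2\mu\|y_\mathcal{P}\|_{L^2}^2$, and the coercivity hypothesis \eqref{eq:mu_resolvent} rewrites as $2\langle \widehat{\mathcal{A}}y_\mathcal{P},y_\mathcal{P}\rangle \leq 2\mu \|y_\mathcal{P}\|_{L^2}^2 - 2\|y_\mathcal{P}\|_{H^1}^2$. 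Combining these I would obtain the key estimate
\begin{equation*}
\frac{\mathrm{d}}{\mathrm{d}t} V(y_\mathcal{P}(t)) \leq -2\|y_\mathcal{P}(t)\|_{H^1(\Omega)}^2 - 2|u(t)|^2,
\end{equation*}
which gives both nonincrease of $V$ and the stronger bound needed for asymptotic stability (by a LaSalle-type argument together with the coercivity $V(y_\mathcal{P})\geq\|y_\mathcal{P}\|_{L^2}^2$).

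The main obstacle I anticipate is not the algebraic identity but its rigorous justification. For classical solutions with $y_\mathcal{P}(t)\in \mathcal{D}(\widehat{\mathcal{A}})$ and absolutely continuous in $t$, the computation above is straightforward; but the closed-loop dynamics are cubic in $y_\mathcal{P}$ (through the feedback) so the first task is to show that the closed-loop system admits at least local-in-time solutions that are regular enough to make $t\mapsto V(y_\mathcal{P}(t))$ differentiable in the sense above, typically by a Galerkin approximation in the eigenbasis of $\widehat{\mathcal{A}}$ or via a fixed-point argument analogous to Theorem \ref{thm:loc_stab}. Once the decay estimate for $V$ is established on the approximations, the a priori bound $V(y_\mathcal{P}(t))\leq V(\mathcal{P}\rho_0)$ prevents blow-up and provides global existence, closing the argument and showing that $V$ is a global Lyapunov function as claimed.
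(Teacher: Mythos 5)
Your proposal is correct and follows essentially the same route as the paper: positivity from the nonnegativity of $\Upsilon$, differentiation of $V$ along the trajectory, cancellation of the cross term via the Lyapunov equation \eqref{eq:Lyap_eq}, the coercivity condition \eqref{eq:mu_resolvent} to absorb $\langle \widehat{\mathcal{A}}y_\mathcal{P},y_\mathcal{P}\rangle$, and the observation that the feedback contributes $-2u^2\le 0$, yielding $\frac{\mathrm{d}}{\mathrm{d}t}V \le -2\|y_\mathcal{P}\|_{H^1(\Omega)}^2$. Your additional remarks on justifying the formal computation (well-posedness of the cubic closed loop, Galerkin approximation) go beyond what the paper records, which presents the derivative identity formally, but they do not change the argument.
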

\begin{proof}
Since $\Upsilon$ is self-adjoint and nonnegative, it obviously holds that
$V(y_\mathcal{P}) \ge \| y_\mathcal{P} \| ^2.$ Moreover, we obtain that
\begin{align*}
  \frac{\mathrm{d}}{\mathrm{d}t} V(y_\mathcal{P}) &= \langle
\widehat{\mathcal{A}}y_\mathcal{P},\Upsilon y_\mathcal{P} + y_\mathcal{P}
\rangle + \langle y_\mathcal{P}, \Upsilon  \widehat{\mathcal{A}}
y_\mathcal{P}+  \widehat{\mathcal{A}}
y_\mathcal{P} \rangle \\
&\quad - \langle \widehat{\mathcal{B}}+\widehat{\mathcal{N}}y_\mathcal{P},
\Upsilon y_\mathcal{P} + y_\mathcal{P} \rangle \langle \widehat{\mathcal{B}}+
\widehat{\mathcal{N}}  y_\mathcal{P}, \Upsilon y_\mathcal{P} + y_\mathcal{P}
\rangle \\
& \quad -\langle \widehat{\mathcal{B}}+\widehat{\mathcal{N}}y_\mathcal{P},
\Upsilon y_\mathcal{P} + y_\mathcal{P} \rangle \langle y_\mathcal{P}
,\Upsilon(\widehat{\mathcal{N}} y_\mathcal{P}+\widehat{\mathcal{B}}) +
(\widehat{\mathcal{N}} y_\mathcal{P}+\widehat{\mathcal{B}}) \rangle \\
&= -2\mu \langle y_\mathcal{P},y_\mathcal{P} \rangle + \langle
\widehat{\mathcal{A}} y_\mathcal{P},y_\mathcal{P} \rangle+ \langle
y_\mathcal{P}, \widehat{\mathcal{A}} y_\mathcal{P}\rangle \\
&\quad - 2 \langle \widehat{\mathcal{B}}+\widehat{\mathcal{N}}y_\mathcal{P},
\Upsilon y_\mathcal{P} + y_\mathcal{P} \rangle^2 \\
&\le 2\langle (\widehat{\mathcal{A}}-\mu I) y_\mathcal{P},y_\mathcal{P} \rangle
\le -2 \langle y_\mathcal{P}, y_\mathcal{P} \rangle _{H^1(\Omega)}
\end{align*}
which shows the assertion.
\end{proof}

In addition to the previous result, the feedback law locally increases the
exponential decay rate.
\begin{theorem}
  Let $\lambda_i,i=2,3,\dots$ denote the eigenvalues of the operator
$\widehat{\mathcal{A}}.$ Assume that
\begin{align*}
\widetilde{\lambda}_2:=\lambda_2 - \| \psi_2\|^2 +
\frac{\mu}{\lambda_2} \| \psi_2 \|^2 \neq \lambda_j, \quad j=3,\dots
\end{align*}
Then for the spectrum of the linearized closed loop operator it holds that
\begin{align*}
 \sigma (\widehat{\mathcal{A}} -
\widehat{\mathcal{B}}\widehat{\mathcal{B}}^*\Upsilon
-\widehat{\mathcal{B}}\widehat{\mathcal{B}}^* ) = \{\widetilde{\lambda}_2\}
\cup \{\lambda_j\}, \quad j\ge 3.
\end{align*}
\end{theorem}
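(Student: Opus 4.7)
The plan is to exploit the rank-one structure of the closed-loop generator. Because the choice \eqref{eq:elliptic_alpha_Lyap} makes $\widehat{\mathcal{B}}=\widehat{\mathcal{N}}\rho_\infty=\psi_2$, a direct computation using $\Upsilon^*=\Upsilon$ gives
\[
\widehat{\mathcal{A}}_{\mathrm{cl}} := \widehat{\mathcal{A}} - \widehat{\mathcal{B}}\widehat{\mathcal{B}}^*\Upsilon - \widehat{\mathcal{B}}\widehat{\mathcal{B}}^* = \widehat{\mathcal{A}} - \langle (I+\Upsilon)\psi_2,\,\cdot\,\rangle\,\psi_2,
\]
a bounded rank-one perturbation of $\widehat{\mathcal{A}}$ whose range lies in $\mathrm{span}\{\psi_2\}$. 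Since $\widehat{\mathcal{A}}$ has compact resolvent (Lemma \ref{lem:spec_A}), so does $\widehat{\mathcal{A}}_{\mathrm{cl}}$, so $\sigma(\widehat{\mathcal{A}}_{\mathrm{cl}})$ is discrete and coincides with its point spectrum. Moreover, via Lemma \ref{lem:spec_A}, $\psi_i = e^{-\Phi/2}\chi_i$ where $\{\chi_i\}_{i\ge 2}$ is the orthonormal basis of $\mathcal{A}_s$-eigenfunctions in $\mathcal{Y}_{\mathcal{P}}$, and since $e^{\pm\Phi/2}$ are bounded invertible multipliers on $L^2(\Omega)$, $\{\psi_i\}_{i\ge 2}$ is a Riesz basis of $\mathcal{Y}_{\mathcal{P}}$.

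The second step is a key identity from the Lyapunov equation \eqref{eq:Lyap_eq}: testing with $y_\mathcal{P}=\psi_i$, $z_\mathcal{P}=\psi_j$ and using $\Upsilon^*=\Upsilon$ yields $(\lambda_i+\lambda_j)\langle\Upsilon\psi_i,\psi_j\rangle = -2\mu\langle\psi_i,\psi_j\rangle$. Specializing to $i=j=2$ gives $\langle\Upsilon\psi_2,\psi_2\rangle=-\tfrac{\mu}{\lambda_2}\|\psi_2\|^2$, hence $\langle(I+\Upsilon)\psi_2,\psi_2\rangle=\lambda_2-\widetilde{\lambda}_2$. A direct computation then shows
\[
\widehat{\mathcal{A}}_{\mathrm{cl}}\psi_2 = \widetilde{\lambda}_2\,\psi_2, \qquad \widehat{\mathcal{A}}_{\mathrm{cl}}\psi_j = \lambda_j\psi_j - \langle(I+\Upsilon)\psi_2,\psi_j\rangle\,\psi_2 \quad (j\ge 3),
\]
so that in the ordered Riesz basis $(\psi_2,\psi_3,\psi_4,\ldots)$ the matrix of $\widehat{\mathcal{A}}_{\mathrm{cl}}$ is upper triangular with diagonal $(\widetilde{\lambda}_2,\lambda_3,\lambda_4,\ldots)$, the only off-diagonal entries being the first-row couplings.

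The eigenvalue analysis then reduces to back-substitution. Writing a putative eigenvector as $v=\sum_{i\ge 2}a_i\psi_i$, the rows for $j\ge 3$ give $(\lambda_j-\lambda)a_j=0$, while the $\psi_2$-row reads $(\widetilde{\lambda}_2-\lambda)a_2 = \sum_{j\ge 3}\langle(I+\Upsilon)\psi_2,\psi_j\rangle\,a_j$. The nondegeneracy assumption $\widetilde{\lambda}_2\neq\lambda_j$ for $j\ge 3$ is exactly what is needed both to construct, for each $\lambda=\lambda_{j_0}$ with $j_0\ge 3$, the eigenvector $\psi_{j_0} + c_{j_0}\psi_2$ with $c_{j_0}=\langle(I+\Upsilon)\psi_2,\psi_{j_0}\rangle/(\widetilde{\lambda}_2-\lambda_{j_0})$, and to rule out any spurious eigenvalue. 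Combined with $\psi_2$ as eigenvector for $\widetilde{\lambda}_2$, this produces the claimed spectrum.

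The main obstacle is rigorously lifting the formal back-substitution to a full spectral identification in the infinite-dimensional setting. This is handled by the Riesz basis property of $\{\psi_i\}_{i\ge 2}$: any $v\in\mathcal{Y}_\mathcal{P}$ has a unique convergent expansion, and coefficient matching in the eigenvalue equation is justified because the matrix of $\widehat{\mathcal{A}}_{\mathrm{cl}}$ has at most one nontrivial off-diagonal entry in each column; together with the compactness of the resolvent, this shows that $\sigma(\widehat{\mathcal{A}}_{\mathrm{cl}}) = \{\widetilde{\lambda}_2\}\cup\{\lambda_j\}_{j\ge 3}$.
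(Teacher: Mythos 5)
Your proposal is correct and follows essentially the same route as the paper: the same key identity $\langle \Upsilon\psi_2,\psi_2\rangle = -\tfrac{\mu}{\lambda_2}\|\psi_2\|^2$ extracted from the Lyapunov equation, the same verification that $\psi_2$ is an eigenvector for $\widetilde{\lambda}_2$, and the same corrected eigenvectors $\psi_j + \beta_j\psi_2$ with $\beta_j = \langle (I+\Upsilon)\psi_2,\psi_j\rangle/(\widetilde{\lambda}_2-\lambda_j)$. The only difference is that you additionally justify that no other spectral points occur (via the Riesz basis expansion in $\{\psi_i\}_{i\ge 2}$ and the compactness of the resolvent), a completeness step the paper leaves implicit; this is a welcome strengthening rather than a different approach.
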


\begin{proof}
  Due to \eqref{eq:Lyap_eq}, we find that
  \begin{align*}
    \langle \Upsilon \psi_2 , \widehat{\mathcal{A}} \psi_2 \rangle + \langle
\widehat{\mathcal{A}} \psi_2, \Upsilon \psi_2 \rangle  = - 2\mu \| \psi_2 \|^2 .
  \end{align*}
Since $\psi_2$ is an eigenfunction of $\widehat{\mathcal{A}},$ this implies that
\begin{align*}
  \langle \Upsilon \psi_2 ,\psi_2 \rangle = -\frac{\mu}{\lambda_2} \|
\psi_2\|^2.
\end{align*}
Further, from our choice of $\alpha,$ we already know that
$\widehat{\mathcal{B}} = \psi_2.$ Hence, it follows that
\begin{align*}
  (\widehat{\mathcal{A}} - \widehat{\mathcal{B}} \widehat{\mathcal{B}}^*
\Upsilon - \widehat{\mathcal{B}} \widehat{\mathcal{B}}^*) \psi_2 = \lambda_2
\psi_2 - \langle \Upsilon\psi_2, \psi_2 \rangle \psi_2 - \|
\psi_2\|^2 \psi_2
\end{align*}
which shows the first part. For $\beta_j:= \frac{\langle \Upsilon
\psi_j+\psi_j,\psi _2 \rangle }{\widetilde{\lambda}_2-\lambda_j},j=3,\dots$ we
further arrive at
\begin{align*}
 &(\widehat{\mathcal{A}}-\widehat{\mathcal{B}}\widehat{\mathcal{B}}^*
\Upsilon -\widehat{\mathcal{B}}\widehat{\mathcal{B}}^* ) (\psi _j
+ \beta_j \psi_2 )  = \beta_j \widetilde{\lambda}_2 \psi_2 +
\widehat{\mathcal{A}}\psi
_j  - \widehat{\mathcal{B}}\widehat{\mathcal{B}}^* \Upsilon \psi_j
- \widehat{\mathcal{B}}\widehat{\mathcal{B}}^*  \psi_j
\\
&\quad =\beta_j \widetilde{\lambda}_2 \psi_2 + \lambda_j\psi_j - \psi_2 \langle
\Upsilon \psi_j,\psi_2\rangle- \psi_2 \langle
 \psi_j,\psi_2\rangle
= \lambda_j(\psi_j+\beta_j \psi_2) .
\end{align*}
This shows the claim.
\end{proof}

\begin{remark}
  Let us emphasize that the feedback law is particularly useful
in cases where $\lambda_2$ is close to the imaginary axis and there is a gap
between $\lambda_2$ and $\lambda_3.$ Indeed, for $\lambda_2 \to 0,$ the term
$\frac{\mu}{\lambda_2} \to -\infty,$ such that the modified eigenvalue
$\widetilde{\lambda}_2$ is moved far away from the imaginary axis.
\end{remark}

\section{Numerical study - A two dimensional double well potential}

\newlength\fheight
\newlength\fwidth
\setlength\fheight{0.6\linewidth}
\setlength\fwidth{0.8\linewidth}

As a numerical example, we consider
\begin{equation}\label{eq:FPE_numex}
\begin{aligned}
  \frac{\partial \rho}{\partial t} &= \nu \Delta \rho + \nabla \cdot (\rho
\nabla G) + u \nabla \cdot (\rho \nabla\alpha) && \text{in } \Omega \times
(0,\infty), \\
0&=(\nu \nabla \rho + \rho \nabla V) \cdot \vec{n}  && \text{on } \Gamma \times
(0,\infty),\\
\rho(x,0)&=\rho_0(x)  && \text{in } \Omega,
\end{aligned}
\end{equation}
on $\Omega = (-1.5,1.5)\times (-1,1) \subset \IR^2,$ with $\nu=1$ and a two
dimensional double well potential of the form
$$G(x) = 3  (x_1^2-1)^2+6x_2^2.$$
For the spatial semidiscretization, a finite difference scheme with $k=
n_{x_1}\cdot n_{x_2} = 96 \cdot 64= 6144$ degrees of freedom was implemented.
The discretization $A \in \IR^{k\times k}$ of the operator $\mathcal{A}$ defined
as in \eqref{eq:A_N_op} was obtained by first discretizing the operator
$\mathcal{A}^*$ as given by \eqref{eq:A_N_adj_op} and then taking the transpose
of the resulting matrix. The reason for this indirect approach was that the
discretization of $\mathcal{A}^*$ only required the incorporation of
``standard'' Neumann boundary conditions rather than the mixed boundary
conditions arising for $\mathcal{A}.$ Due to the convective terms included in
$\mathcal{A}$ and $\mathcal{A}^*,$ a first order upwind scheme was utilized.
Let us emphasize that even for the value $\nu=1,$ this turned out to be
essential for the accuracy of the discretization. We also mention the
possibility of using more advanced discretization schemes that have been
proposed in the context of the Fokker-Planck equation, see, e.g.,
\cite{AnnB13,CooC70}. However, the finite
difference scheme lead to accurate approximations of the stationary
distribution and the preservation of probability was ensured up to machine
precision in all our numerical results. Figure \ref{fig:pot_stat} now shows the
discretization of the double well potential as well as the corresponding
(spatially discrete) stationary distribution $\rho_\infty^k.$
\begin{figure}[htb]
  \begin{subfigure}[c]{0.49\textwidth}
    \includegraphics[scale=0.45]{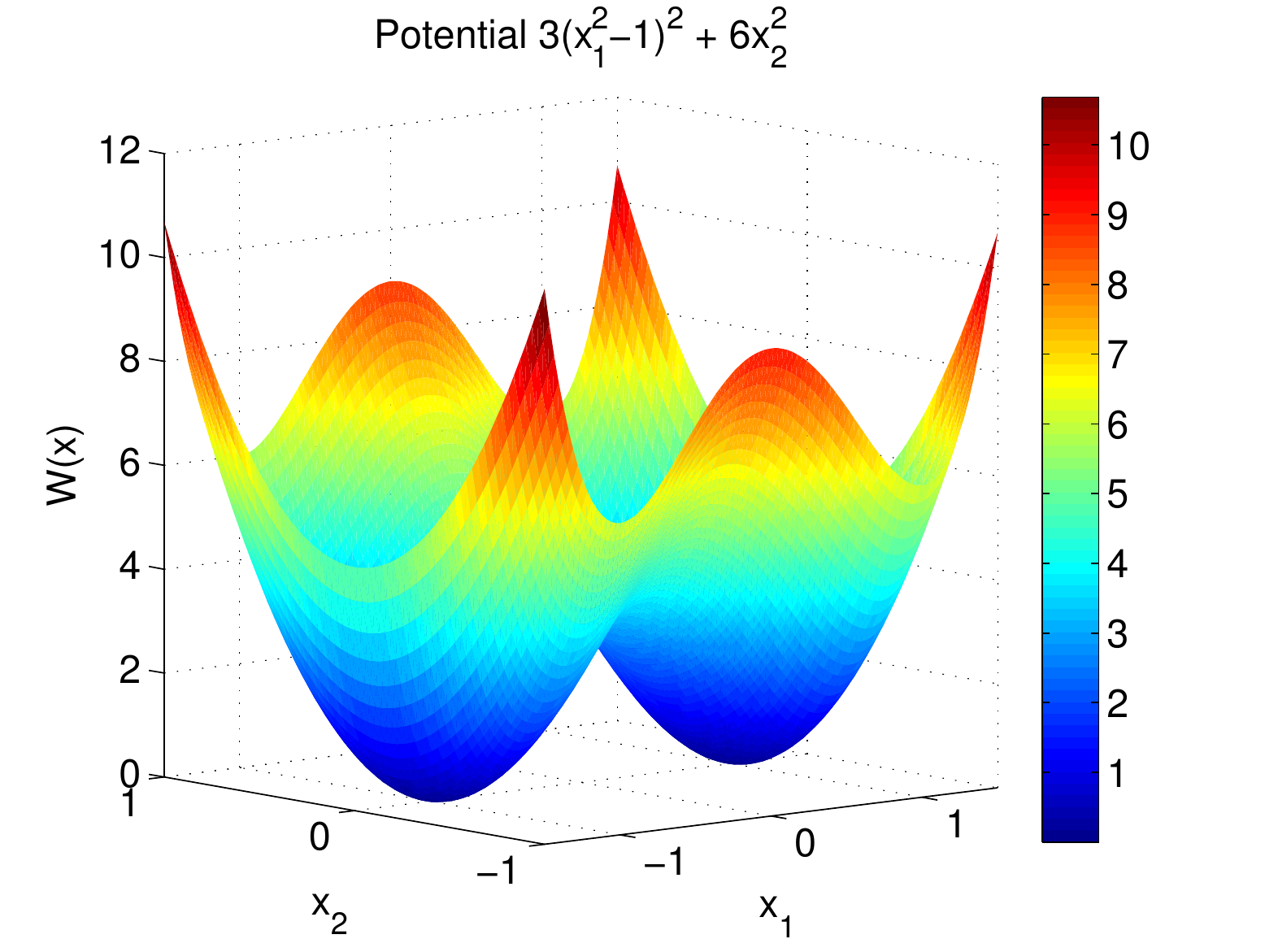}
    \end{subfigure}\begin{subfigure}[c]{0.49\textwidth}
    \includegraphics[scale=0.45]{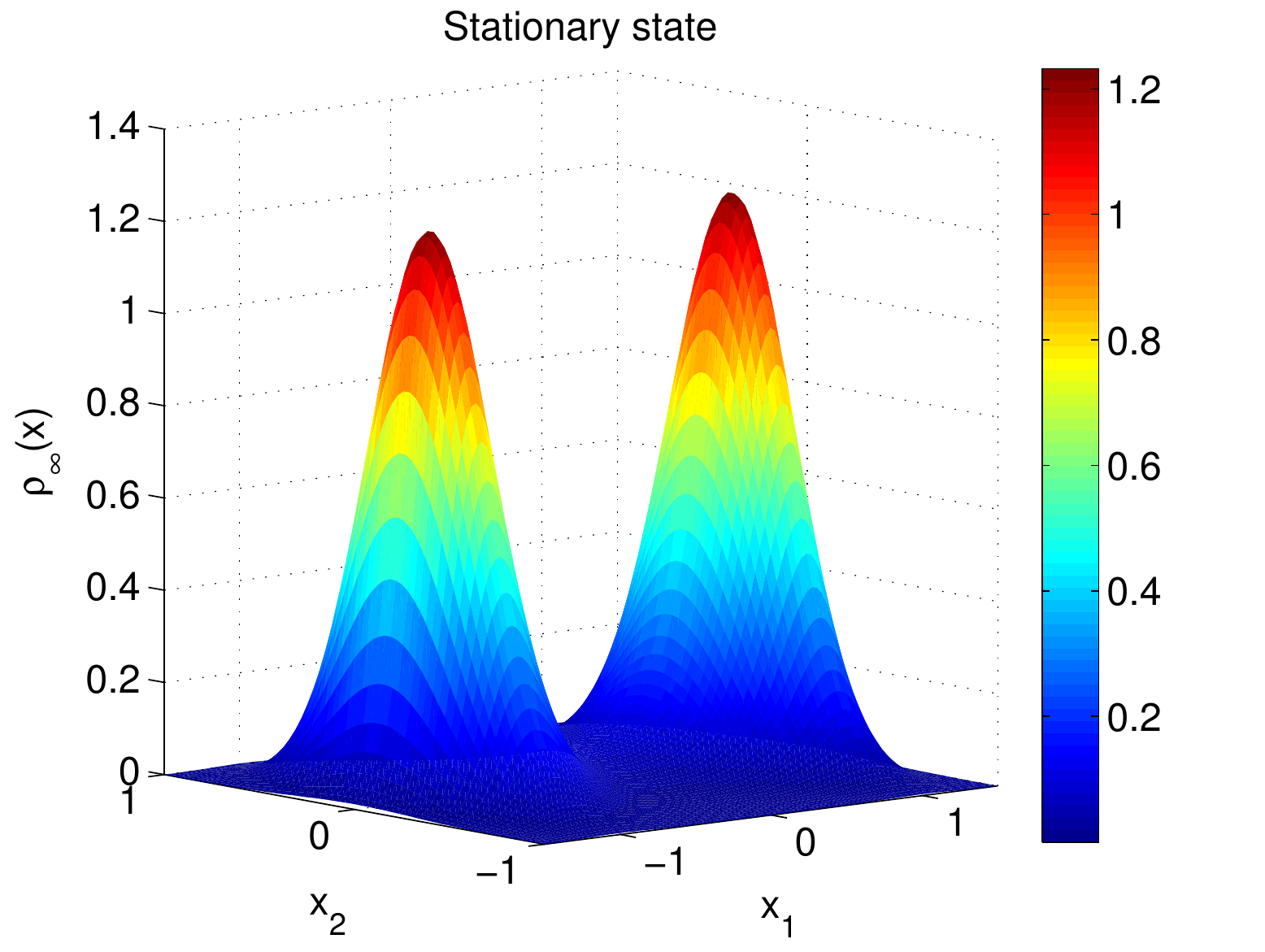}
    \end{subfigure}
  \caption{Confining double well potential (left) and associated stationary
state (right).}
  \label{fig:pot_stat}
    \end{figure}

For both the Riccati-based and the Lyapunov-based control strategy, the
discrete control operators $N$ and $B=N\rho_\infty^k$ were derived based on the
solutions $\alpha(x)$ to \eqref{eq:elliptic_alpha} and
\eqref{eq:elliptic_alpha_Lyap}. To be more precise, first, the involved
elliptic equations were also discretized by a finite difference scheme which,
due to the Neumann boundary conditions, lead to matrices $C$ with a zero
eigenvalue. The individual spatially discrete shape functions $\alpha^k$ were
obtained by utilizing the Moore-Penrose pseudoinverse of the $C$ matrices.
Finally, with the resulting $\alpha^k,$ the matrices $N$ were generated by the
discretization of the operator $\mathcal{N}$ defined in \eqref{eq:A_N_op}. For
the Riccati-based approach, we incorporated the eigenfunctions to the first
three nonzero eigenvalues into \eqref{eq:elliptic_alpha}, i.e., we set $d=4.$
Since varying the value $d$ lead to qualitatively similar behavior, we only
report on the results for the special case $d=4.$ Due to the Hautus criterion,
it was thus possible to solve the associated Riccati equation  with $\delta
\approx 12.26.$ The
corresponding control shape functions for the Riccati-based (left) and the
Lyapunov-based (right) approach are given
in Figure \ref{fig:control_shape}.
\begin{figure}[htb]
  \begin{subfigure}[c]{0.33\textwidth}
    \includegraphics[scale=0.29]{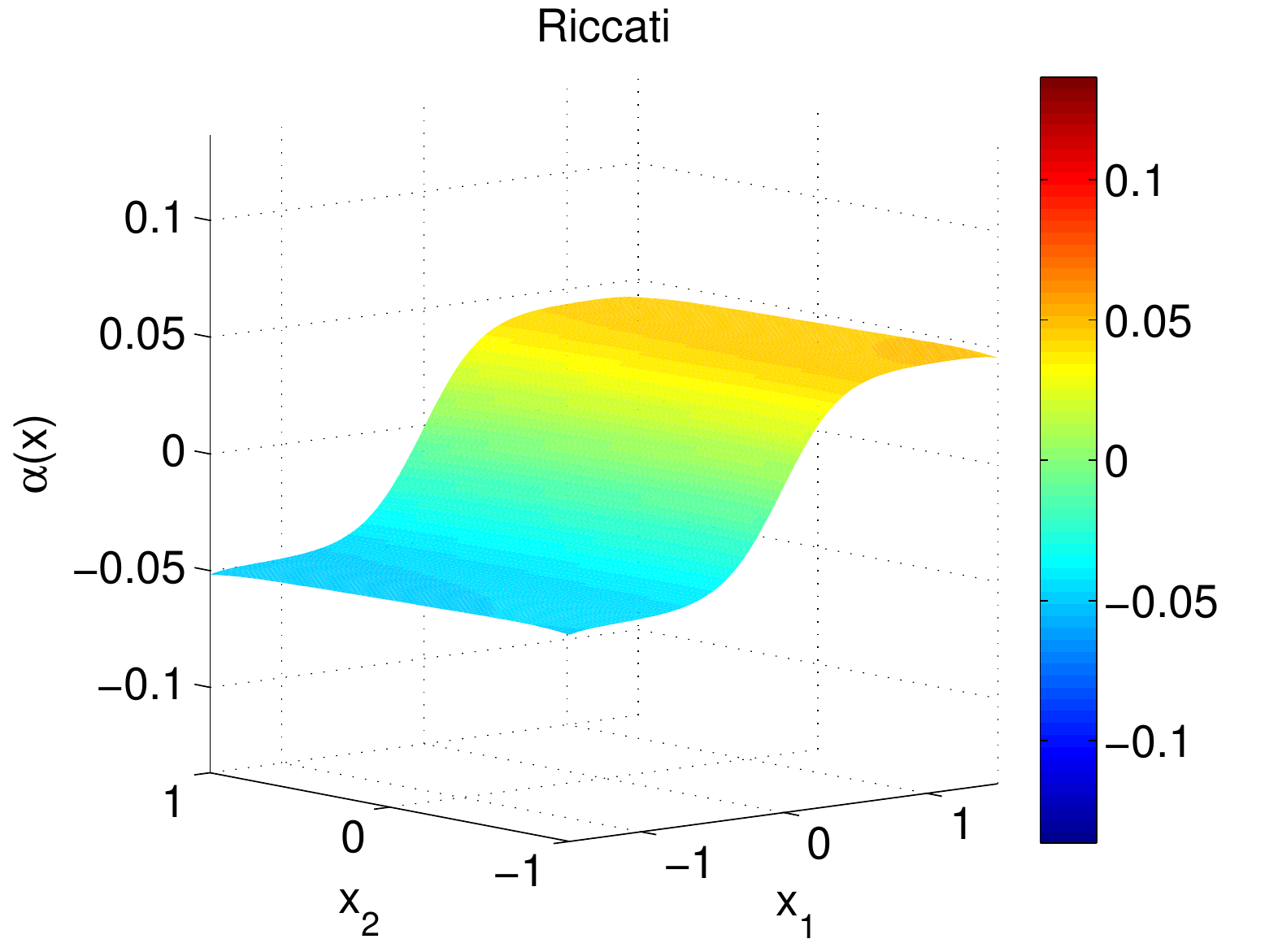}
    \end{subfigure}\begin{subfigure}[c]{0.33\textwidth}
    \includegraphics[scale=0.29]{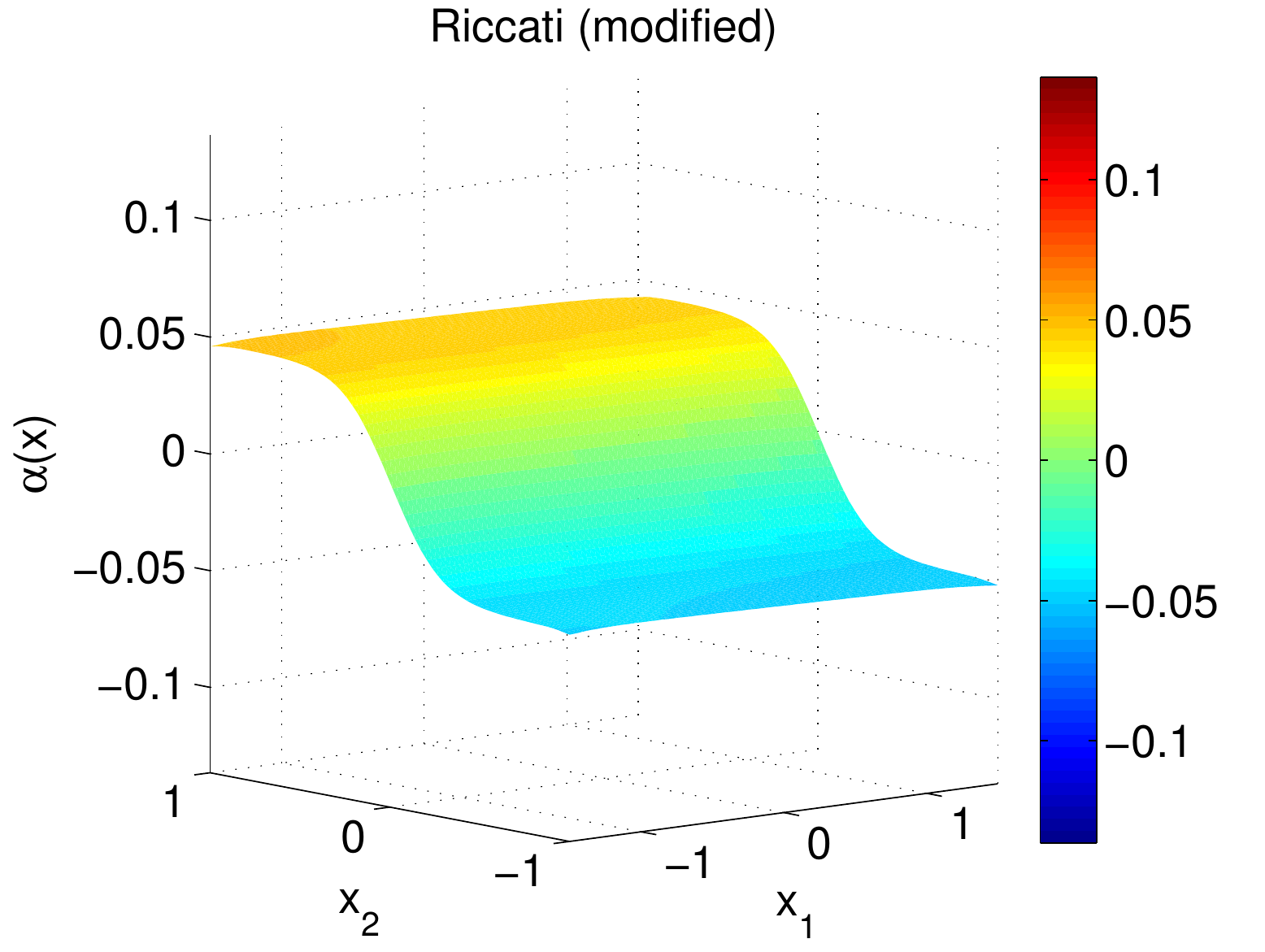}
    \end{subfigure}\begin{subfigure}[c]{0.33\textwidth}
    \includegraphics[scale=0.29]{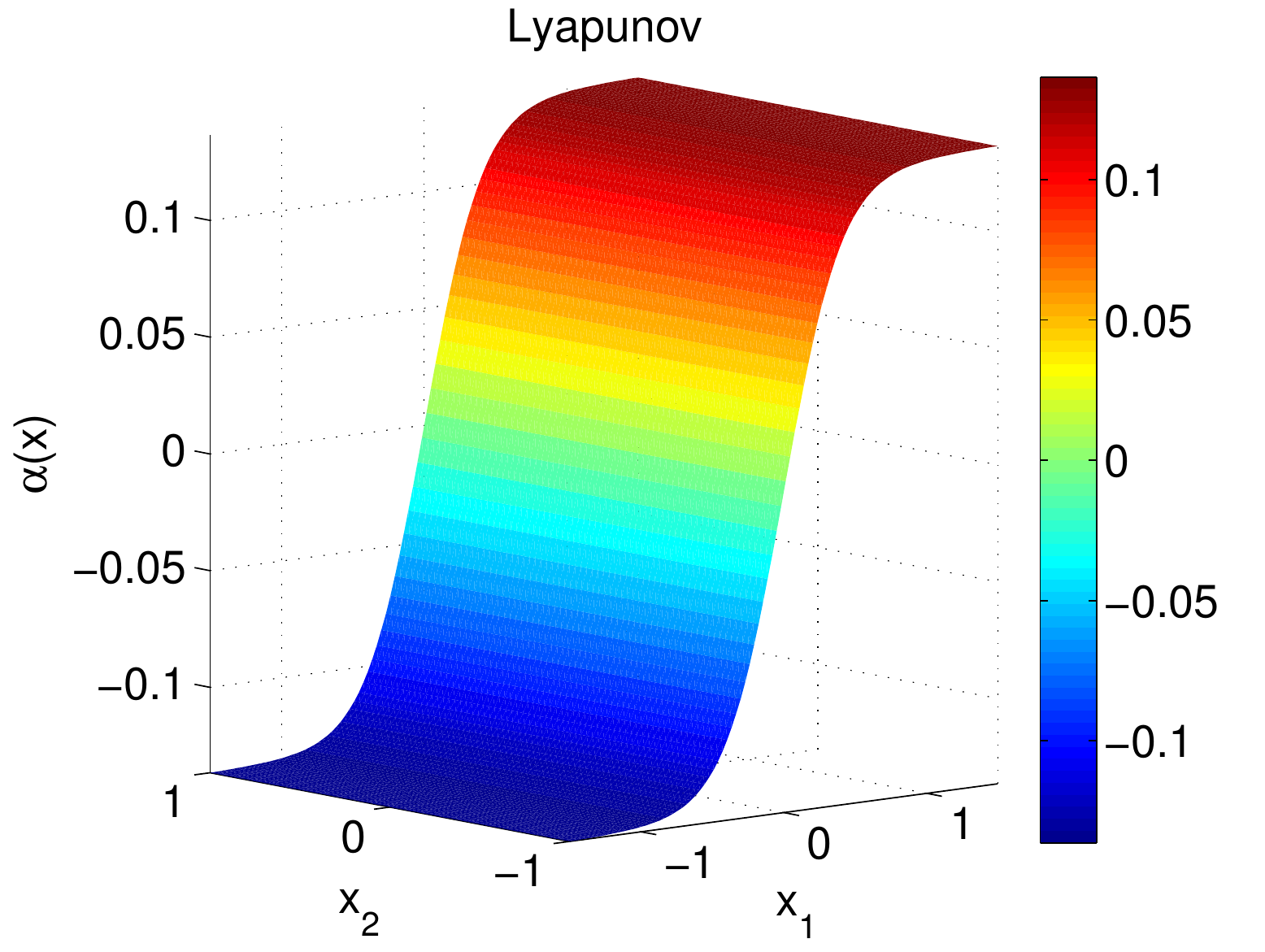}
    \end{subfigure}
  \caption{Control shape functions for different approaches.}
  \label{fig:control_shape}
    \end{figure}
In order to underline the benefit of using a ``specific'' $\alpha$ rather
than an ``arbitrary'' one, we also report on some results we obtained by
rotating the control shape function $\alpha$ (see Figure
\ref{fig:control_shape} center) while still using a Riccati-based feedback
law obtained from the linearized system.

All simulations were generated on an \intel Xeon(R) CPU E31270 @ 3.40 GHz x 8,
16 GB RAM, Ubuntu Linux 14.04, \matlab \;Version 8.0.0.783
(R2012b) 64-bit (glnxa64). The solutions of the ODE systems were always obtained
by the \matlab\;routine \texttt{ode23}. For solving the Riccati and Lyapunov
equations, we used the \matlab\;routine \texttt{care} and
\texttt{lyap}, respectively, and the technique presented below.

\subsection{Solving the Riccati equation}
\label{subsec:num_ric}

Based on the discretization scheme described above, let us at this point assume
that $A \in \IR^{k \times k}$,
$B \in \IR^{k \times 1}$, $M \in \IR^{k \times k}$, $\rho_\infty \in
\IR^{k}$ are given and satisfy:
\begin{equation*}
A \rho_\infty= 0, \quad
A^\top \mathbbm{1}= 0, \quad
B^\top \mathbbm{1}= 0, \quad
M^\top= M, \quad
\langle \mathbbm{1}, \rho_\infty \rangle= 1,
\end{equation*}
where $\mathbbm{1} = h_ {x_1}\cdot h_{x_2}
\begin{pmatrix}1,\dots,1\end{pmatrix}^\top$ and $h_x,h_y$ denote the mesh size.
We denote by $P$ the projection on $\mathbbm{1}^\perp$ along $\IR \rho_\infty$:
$P= I_k - \rho_\infty \mathbbm{1}^\top$. We denote by $(e_i)_{i=1,\dots,k}$ the
vectors of the canonical basis.
We aim at solving the following discretized Riccati equation:
\begin{equation} \label{eq:discretizedRiccati}
(A^\top + \delta P^\top) \Pi + \Pi (A+ \delta P) - \Pi B B^\top \Pi + P^\top M P= 0,
\quad
\Pi \rho_\infty= 0, \quad
\Pi^\top= \Pi.
\end{equation}
Let $R \in \IR^{k \times k}$ be a regular matrix satisfying:
\begin{equation*}
Re_k= \rho_\infty \quad \text{and} \quad
R^\top \mathbbm{1}= e_k.
\end{equation*}
Note that the condition $R^\top \mathbbm{1}= e_k$ is equivalent to: $\forall
i=1,\dots,k-1,\ R e_i \in \mathbbm{1}^\perp$.
An example of matrix $R$ is given by:
\begin{equation*}
R= \left( \begin{array}{cccc}
1 & &  & \rho_{\infty,1} \\
 & \ddots & & \vdots \\
 & & 1 & \rho_{\infty,k-1} \\
-1 & \hdots & -1 & \rho_{\infty,k}
\end{array}
\right).
\end{equation*}
Note that:
\begin{equation*}
R^{-1}= \left( \begin{array}{cccc}
1 & & & 0 \\
 & \ddots & & \vdots \\
 & & 1 & 0 \\
1 & \hdots & 1 & 1
\end{array} \right)
-
\left( \begin{array}{ccc}
\rho_{\infty,1} & \hdots & \rho_{\infty,1} \\
\vdots & \vdots & \vdots \\
\rho_{\infty,k-1} & \hdots & \rho_{\infty,k-1} \\
0 & \hdots & 0
\end{array} \right).
\end{equation*}
We also introduce: $Q = \left( \begin{array}{c} I_{k-1} \\ 0 \end{array}
\right)$.
Consider the reduced and discretized Riccati equation (in $\IR^{(k-1) \times
(k-1)}$):
\begin{equation} \label{eq:discretizedreducedRiccati}
(\widehat{A}^\top + \delta I_{k-1}) \widehat{\Pi} + \widehat{\Pi} (\widehat{A}+
\delta I_{k-1}) - \widehat{\Pi} \widehat{B} \widehat{B}^\top \widehat{\Pi} +
\widehat{M} = 0,\quad
\widehat{\Pi}^\top = \widehat{\Pi}, \quad
\end{equation}
where $\widehat{A}= Q^\top R^{-1} A R Q$, $\widehat{B}= Q^\top R^{-1} B$,
$\widehat{M}= Q^\top R^\top P^\top M P R Q$.

\begin{lemma}
Let $\Pi \in \IR^{k \times k}$. The matrix $\Pi$ is a solution to
\eqref{eq:discretizedRiccati} if and only if there exists a solution
$\widehat{\Pi}$ to \eqref{eq:discretizedreducedRiccati} such that $\Pi=
R^{-\top} \left( \begin{array}{cc} \widehat{\Pi} & 0 \\ 0 & 0 \end{array}
\right) R^{-1}$.
\end{lemma}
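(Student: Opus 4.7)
The plan is to mimic the proof of Lemma \ref{lem:reductionRiccati} in the finite-dimensional setting, where the role of the homeomorphism $\mathcal{R}$ is played by the matrix $R$. Specifically, I would perform the change of variables $\Pi = R^{-\top} \widetilde{\Pi} R^{-1}$ with symmetric $\widetilde{\Pi}$, and show that with the specific choice of $R$ (which satisfies $R e_k = \rho_\infty$ and $R^\top \mathbbm{1} = e_k$), each reduced coefficient matrix
\begin{equation*}
\overline{A} := R^{-1} A R, \quad
\overline{P} := R^{-1} P R, \quad
\overline{B} := R^{-1} B, \quad
\overline{M} := R^\top P^\top M P R
\end{equation*}
acquires a block structure with trivial last row/column, so the transformed Riccati equation decouples.

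First I would conjugate \eqref{eq:discretizedRiccati} by $R$: multiplying on the left by $R^\top$ and on the right by $R$ gives
\begin{equation*}
(\overline{A}^\top + \delta \overline{P}^\top) \widetilde{\Pi} + \widetilde{\Pi}(\overline{A}+\delta \overline{P}) - \widetilde{\Pi} \overline{B}\overline{B}^\top \widetilde{\Pi} + \overline{M} = 0.
\end{equation*}
Next, using $R e_k = \rho_\infty$, $R^\top \mathbbm{1} = e_k$, and the standing identities $A\rho_\infty = 0$, $A^\top \mathbbm{1}=0$, $B^\top \mathbbm{1}=0$, a direct computation yields
$\overline{A} e_k = 0$, $e_k^\top \overline{A} = \mathbbm{1}^\top A R = 0$, $e_k^\top \overline{B} = \mathbbm{1}^\top B = 0$; moreover $\overline{P} = R^{-1}(I - \rho_\infty \mathbbm{1}^\top) R = I_k - e_k e_k^\top$, and $PR = R(I_k - e_k e_k^\top)$ has trivial last column so that $\overline{M}$ has trivial last row and column. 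Concretely, in block form against the splitting $\IR^k = \IR^{k-1} \oplus \IR$,
\begin{equation*}
\overline{A} = \begin{pmatrix} \widehat{A} & 0 \\ 0 & 0 \end{pmatrix}, \quad
\overline{P} = \begin{pmatrix} I_{k-1} & 0 \\ 0 & 0 \end{pmatrix}, \quad
\overline{B} = \begin{pmatrix} \widehat{B} \\ 0 \end{pmatrix}, \quad
\overline{M} = \begin{pmatrix} \widehat{M} & 0 \\ 0 & 0 \end{pmatrix},
\end{equation*}
with $\widehat{A}, \widehat{B}, \widehat{M}$ as defined before \eqref{eq:discretizedreducedRiccati}.

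Writing $\widetilde{\Pi}= \begin{pmatrix} \widetilde{\Pi}_{11} & \widetilde{\Pi}_{12} \\ \widetilde{\Pi}_{12}^\top & \widetilde{\Pi}_{22} \end{pmatrix}$ and expanding the transformed equation blockwise, the (1,1) block reads exactly \eqref{eq:discretizedreducedRiccati} for $\widehat{\Pi} := \widetilde{\Pi}_{11}$, while the (1,2) and (2,2) blocks reduce to $(\widehat{A}^\top + \delta I_{k-1})\widetilde{\Pi}_{12} - \widetilde{\Pi}_{11}\widehat{B}\widehat{B}^\top \widetilde{\Pi}_{12} = 0$ and $\widetilde{\Pi}_{12}^\top \widehat{B}\widehat{B}^\top \widetilde{\Pi}_{12}= 0$. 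The constraint $\Pi \rho_\infty = 0$ translates via $\rho_\infty = Re_k$ into $\widetilde{\Pi} e_k = 0$, i.e.\ $\widetilde{\Pi}_{12}=0$ and $\widetilde{\Pi}_{22}=0$, which trivializes these two auxiliary blocks.

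This establishes the forward direction: every $\Pi$ solving \eqref{eq:discretizedRiccati} has the form $\Pi = R^{-\top}\begin{pmatrix} \widehat{\Pi} & 0 \\ 0 & 0 \end{pmatrix} R^{-1}$ with $\widehat{\Pi}$ solving \eqref{eq:discretizedreducedRiccati}. Conversely, given such a $\widehat{\Pi}$, defining $\Pi$ by the same formula makes it symmetric, satisfies $\Pi \rho_\infty = R^{-\top}\begin{pmatrix} \widehat{\Pi} & 0 \\ 0 & 0 \end{pmatrix} e_k = 0$, and reversing the block computation shows \eqref{eq:discretizedRiccati}. I do not anticipate any real obstacle here; the only nontrivial ingredient is the choice of $R$ itself, which has been arranged precisely so that the change of variables simultaneously identifies the $(k-1)$-dimensional "reduced" space with $\mathbbm{1}^\perp$ and aligns the kernel direction $\rho_\infty$ with $e_k$. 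Once this has been unpacked the lemma is a block-matrix bookkeeping exercise analogous to the derivation of \eqref{eq:reducedOp}.
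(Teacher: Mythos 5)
Your proposal is correct and follows essentially the same route as the paper: conjugate \eqref{eq:discretizedRiccati} by $R$, verify that $\widetilde{A}$, $\widetilde{P}$, $\widetilde{B}\widetilde{B}^\top$, $\widetilde{M}$ have null last row and column with the upper-left block of $\widetilde{P}$ equal to $I_{k-1}$, and read off the equivalence from the block decomposition, using the constraint $\Pi\rho_\infty=0$ (equivalently $\widetilde{\Pi}e_k=0$) to kill the off-diagonal blocks. You simply spell out the computations that the paper leaves as "one can easily check," and you correctly note that here, unlike in Lemma \ref{lem:reductionRiccati}, no Hautus argument is needed since the kernel condition is built into the equation.
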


\begin{proof}
The proof is similar to the proof of Lemma \ref{lem:reductionRiccati}. Observe
that $\Pi$ is a solution to \eqref{eq:discretizedRiccati} if and only if
$\widetilde{\Pi}= R^\top \Pi R$ is a solution to
\begin{equation} \label{eq:rotatedRiccati}
(\widetilde{A}^\top + \delta \widetilde{P}^\top) \widetilde{\Pi} +
\widetilde{\Pi} (\widetilde{A} + \delta \widetilde{P}) - \widetilde{\Pi}
(\widetilde{B} \widetilde{B}^\top) \widetilde{\Pi} + \widetilde{M}= 0, \quad
\widetilde{\Pi} e_N= 0, \quad
\widetilde{\Pi}^\top = \widetilde{\Pi}
\end{equation}
where: $\widetilde{A}= R^{-1} A R$, $\widetilde{P}= R^{-1} P R$, $\widetilde{B}=
R^{-1} B$, $\widetilde{M}= R^\top P^\top M P R$. One can easily check that the last row
and the last column of the following matrices are null: $\widetilde{A}$,
$\widetilde{P}$, $\widetilde{B}\widetilde{B}^\top$, $\widetilde{M}$. Moreover,
the upper left block of $\widetilde{P}$ is $I_{k-1}$. The equivalence follows
directly from a block decomposition of equation \eqref{eq:rotatedRiccati}.
\end{proof}

\begin{remark}
Let us emphasize that computing the solution $\widehat{\Pi}$ to
\eqref{eq:discretizedreducedRiccati} is a challenging task already in the case
when $\Omega \subset \mathbb R^{n}$ with $n=2,3,$ respectively, in particular
because the matrices defining the reduced Riccati equation
\eqref{eq:discretizedreducedRiccati} are dense. On the other hand, according to
Lemma \ref{lem:spec_A} the only accumulation point of the spectrum of
$\mathcal{A}$ is $-\infty.$ Thus, as a perspective for future developments
geared at considering  control of the
Fokker-Planck equation in higher dimensions, it
is of interest to only $\delta$-stabilize the part of the spectrum
that is closest to the imaginary axis. This way, the resolution of a Riccati
equation of large dimension can be avoided at almost no loss of performance.
The idea goes back (at least) to \cite{Tri75} and is also studied in
\cite{RayT10} and the references therein.
A detailed discussion
together with an implementation tailored to the special structure of the
Fokker-Planck equation is currently being investigated. As an alternative way
for reducing the complexity we also mention specific model reduction approaches
as considered in \cite{Har11,HarST13}.
\end{remark}

\subsection{A random initial state}

The first test case is concerned with the evolution of the uncontrolled and
controlled systems for a random initial state $\rho_0^k$
(\texttt{rand($k$)}). The temporal evolution of the deviation of the
state $\rho(t)$ from the stationary distribution $\rho_\infty^k$ with respect
to the $L^2(\Omega)$-norm is shown in Figure \ref{fig:L2_rand}.
 \begin{figure}[htb]
\begin{center}
   \includegraphics{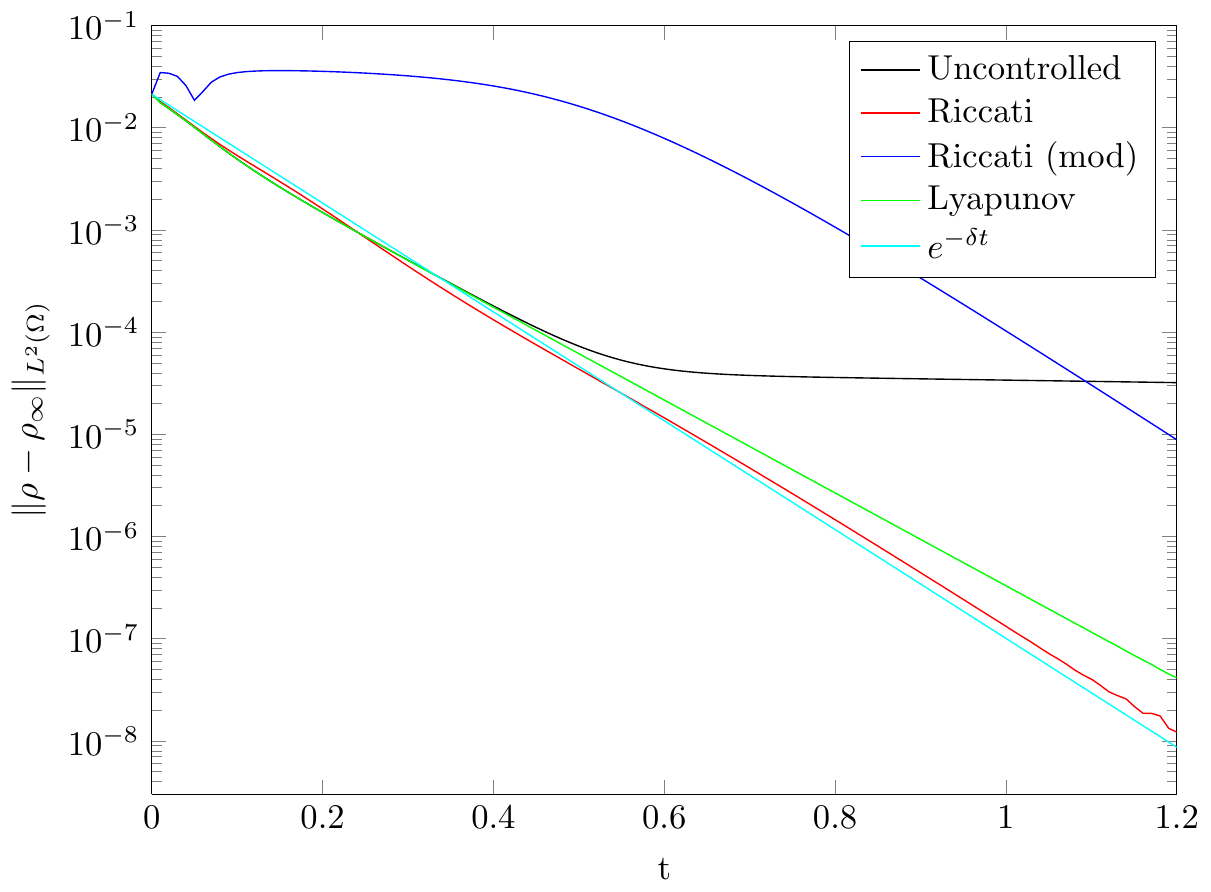}
  \caption{Comparison of $L^2(\Omega)$-norm evolution}
  \label{fig:L2_rand}
  \end{center}
\end{figure}
In addition to the dynamics of the systems, we also visualized the exponential
decay rate $\delta$ that one would expect from solving the
 Riccati equation discussed in Subsection \ref{subsec:num_ric}. Some comments
are in order. It can be seen that in the beginning, the uncontrolled system
approaches the stationary distribution as fast as the controlled systems. After
some time, however, the convergence rate becomes significantly slower. For the
controlled solutions, let us point out that there is almost no visible
difference between the Lyapunov-based approach and the Riccat-based approach.
On the other hand, with the rotated control shape function $\alpha,$ the
performance is clearly worse. In fact, in this case, the controlled dynamics
converge slower than for the uncontrolled case.
\begin{figure}[htb]
  \begin{subfigure}[c]{0.24\textwidth}
    \includegraphics[scale=0.22]{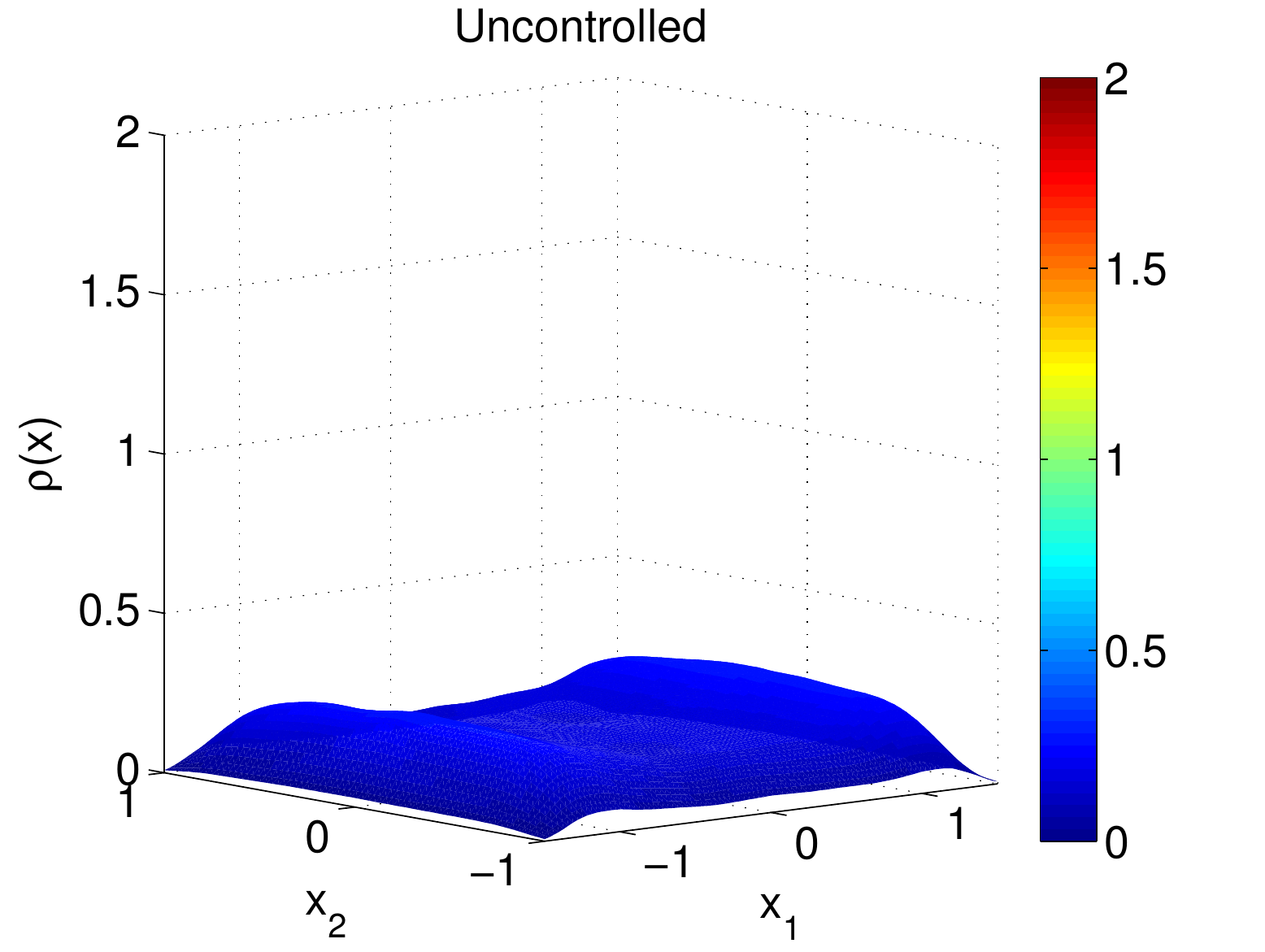}
    \caption{$t=0.01$.}
    \end{subfigure}\begin{subfigure}[c]{0.24\textwidth}
    \includegraphics[scale=0.22]{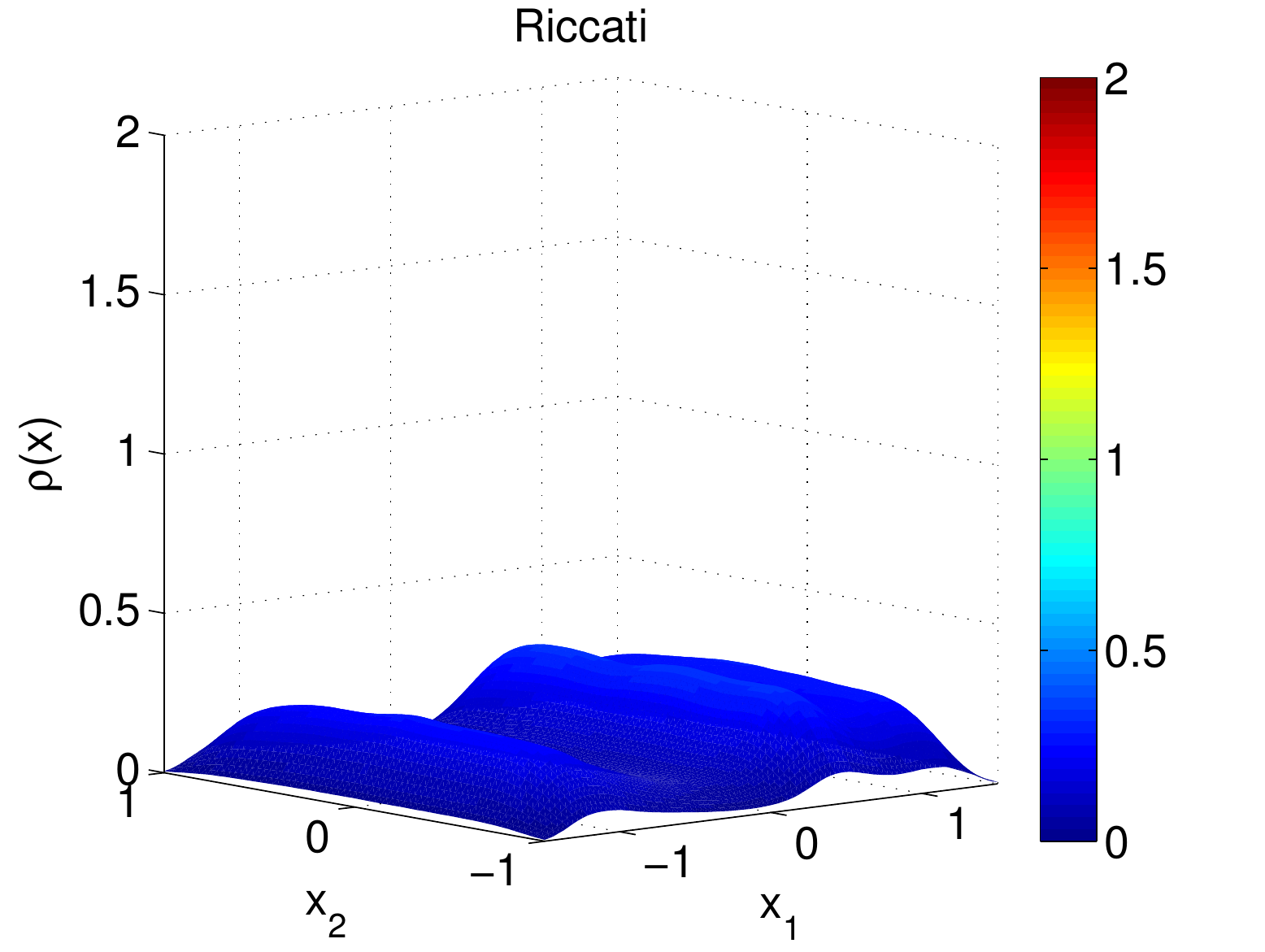}
    \caption{$t=0.01$.}
    \end{subfigure}\begin{subfigure}[c]{0.24\textwidth}
    \includegraphics[scale=0.22]{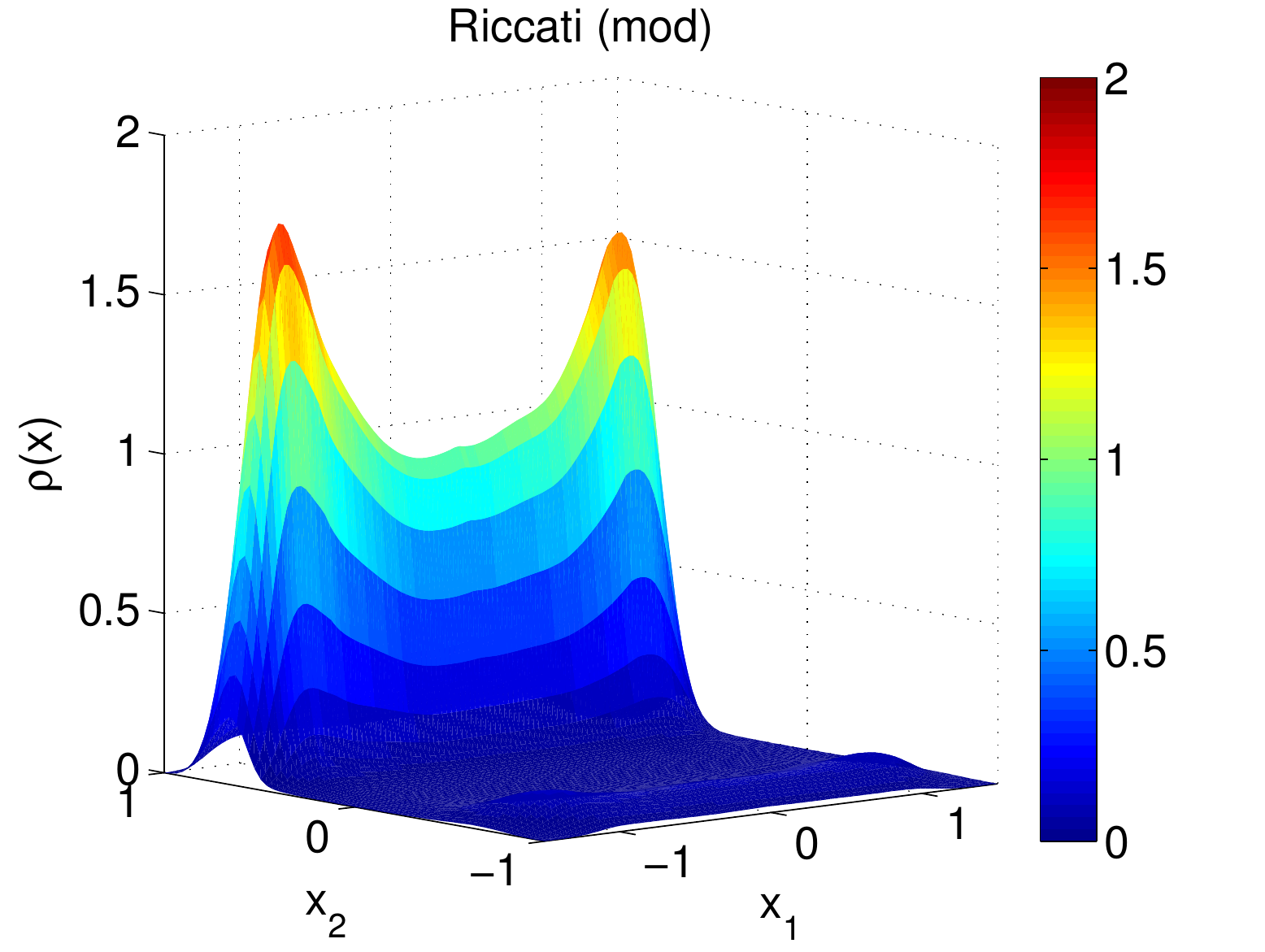}
    \caption{$t=0.01$.}
    \end{subfigure}\begin{subfigure}[c]{0.24\textwidth}
    \includegraphics[scale=0.22]{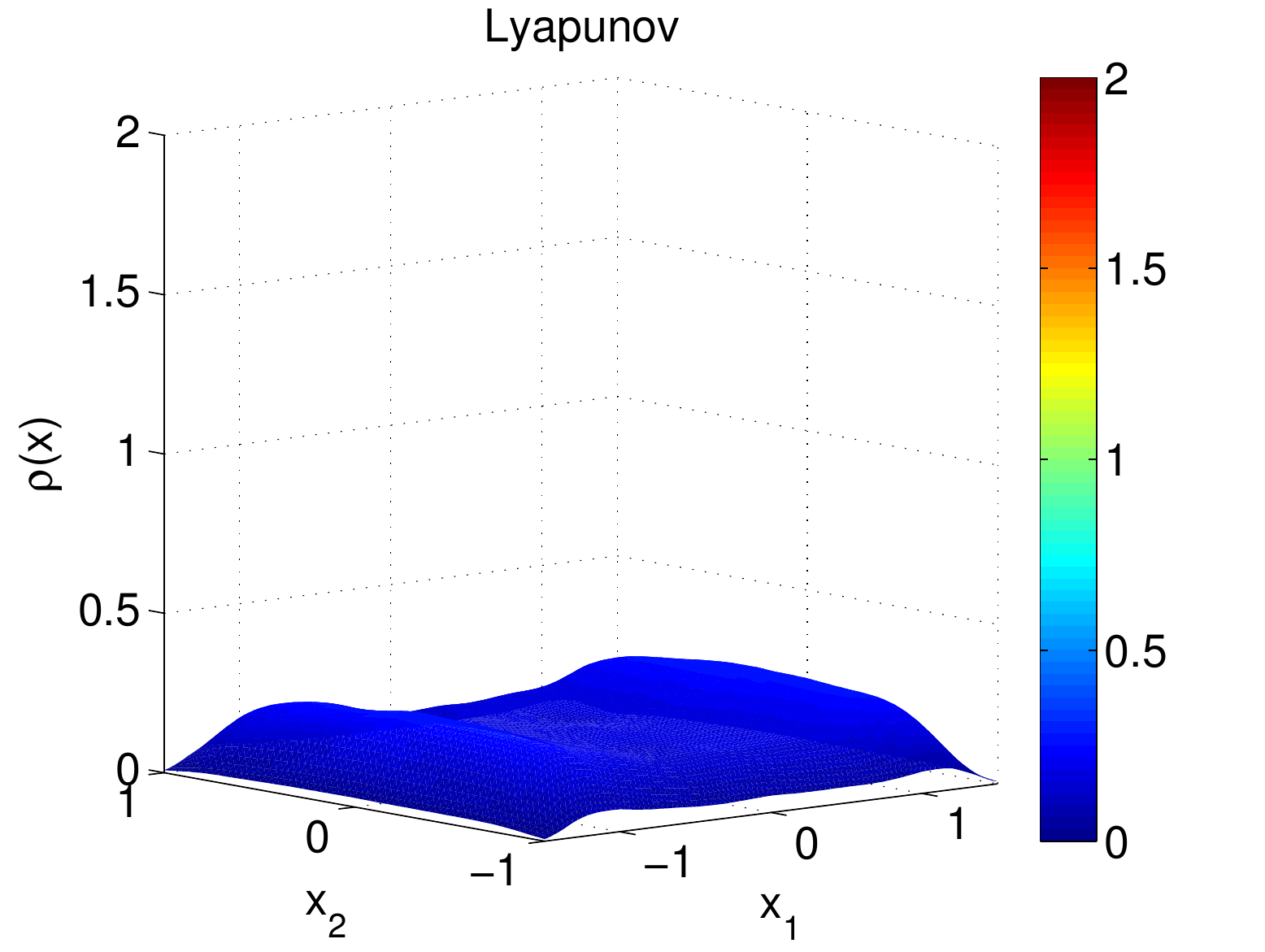}
    \caption{$t=0.01$.}
    \end{subfigure}\\ \begin{subfigure}[c]{0.24\textwidth}
    \includegraphics[scale=0.22]{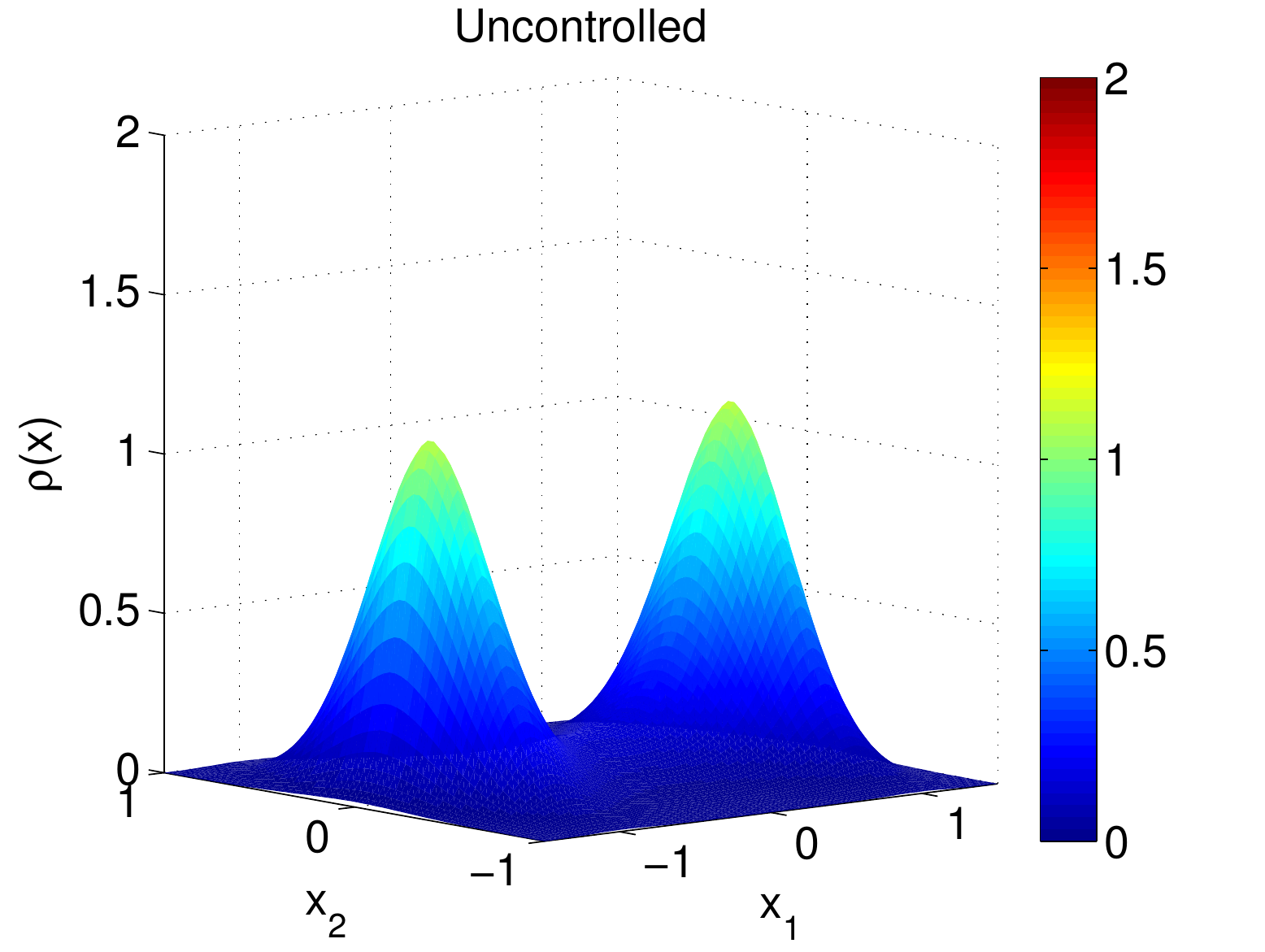}
    \caption{$t=0.15$.}
    \end{subfigure}\begin{subfigure}[c]{0.24\textwidth}
    \includegraphics[scale=0.22]{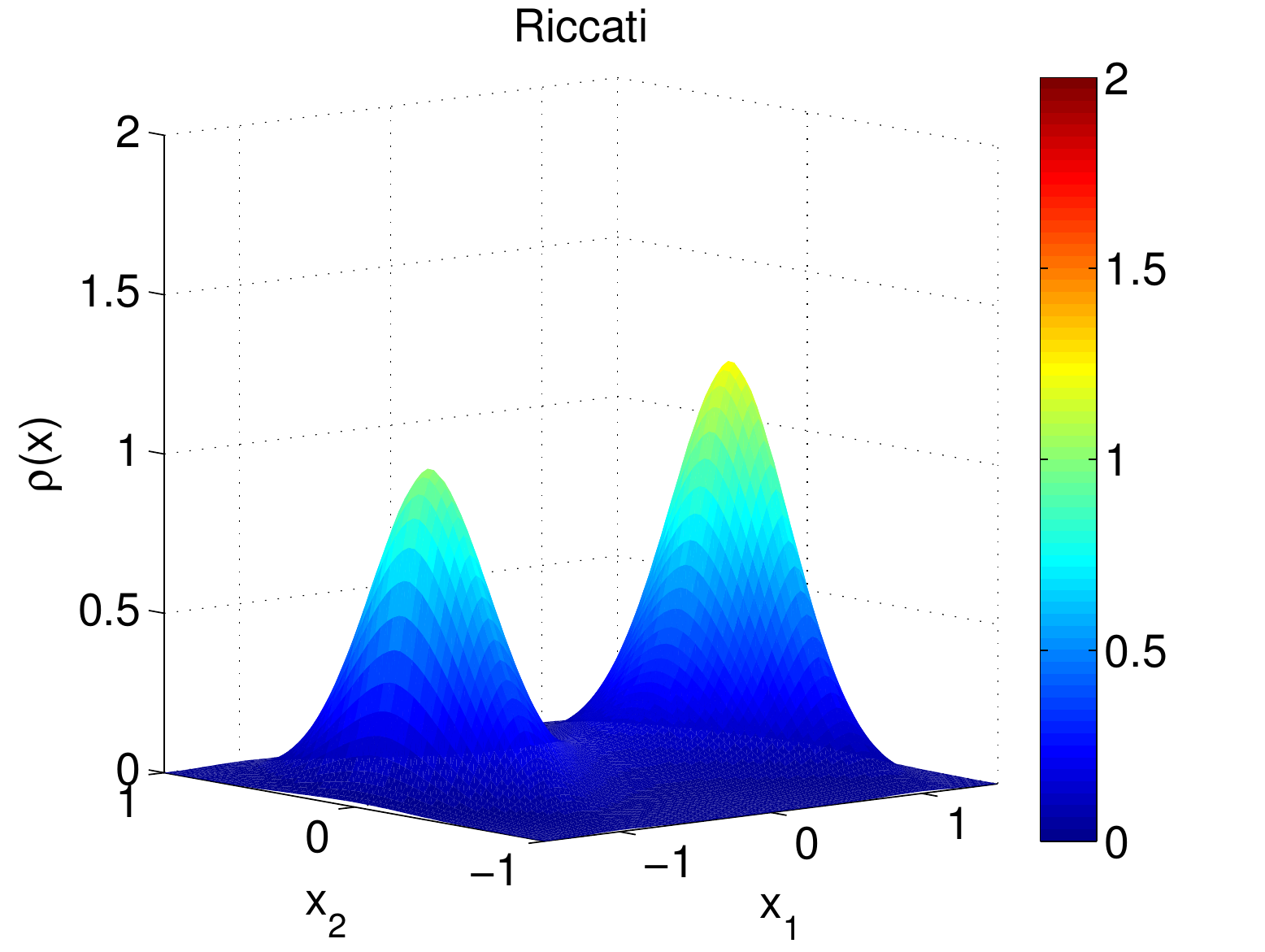}
    \caption{$t=0.15$.}
    \end{subfigure}\begin{subfigure}[c]{0.24\textwidth}
    \includegraphics[scale=0.22]{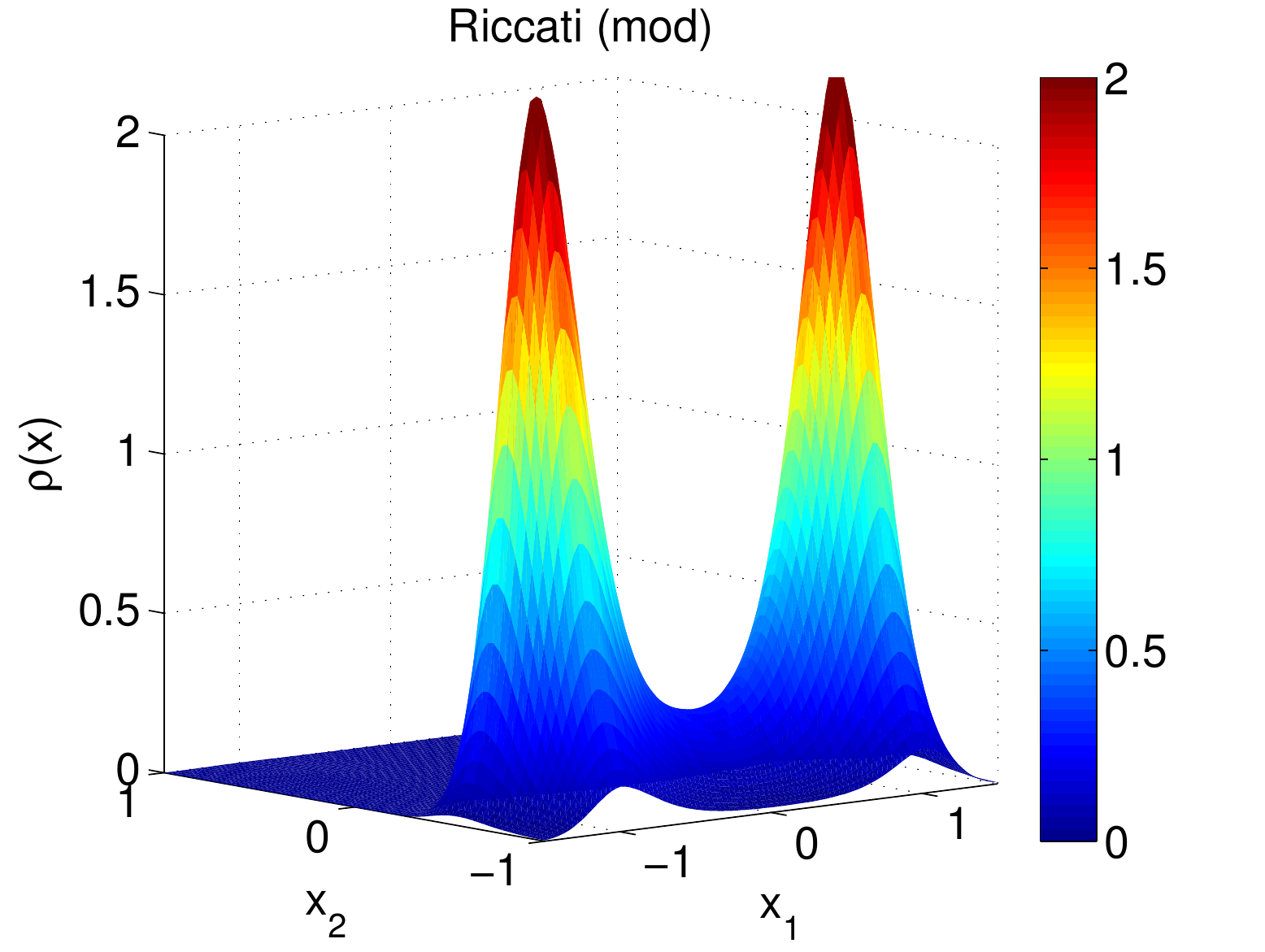}
    \caption{$t=0.15$.}
    \end{subfigure}\begin{subfigure}[c]{0.24\textwidth}
    \includegraphics[scale=0.22]{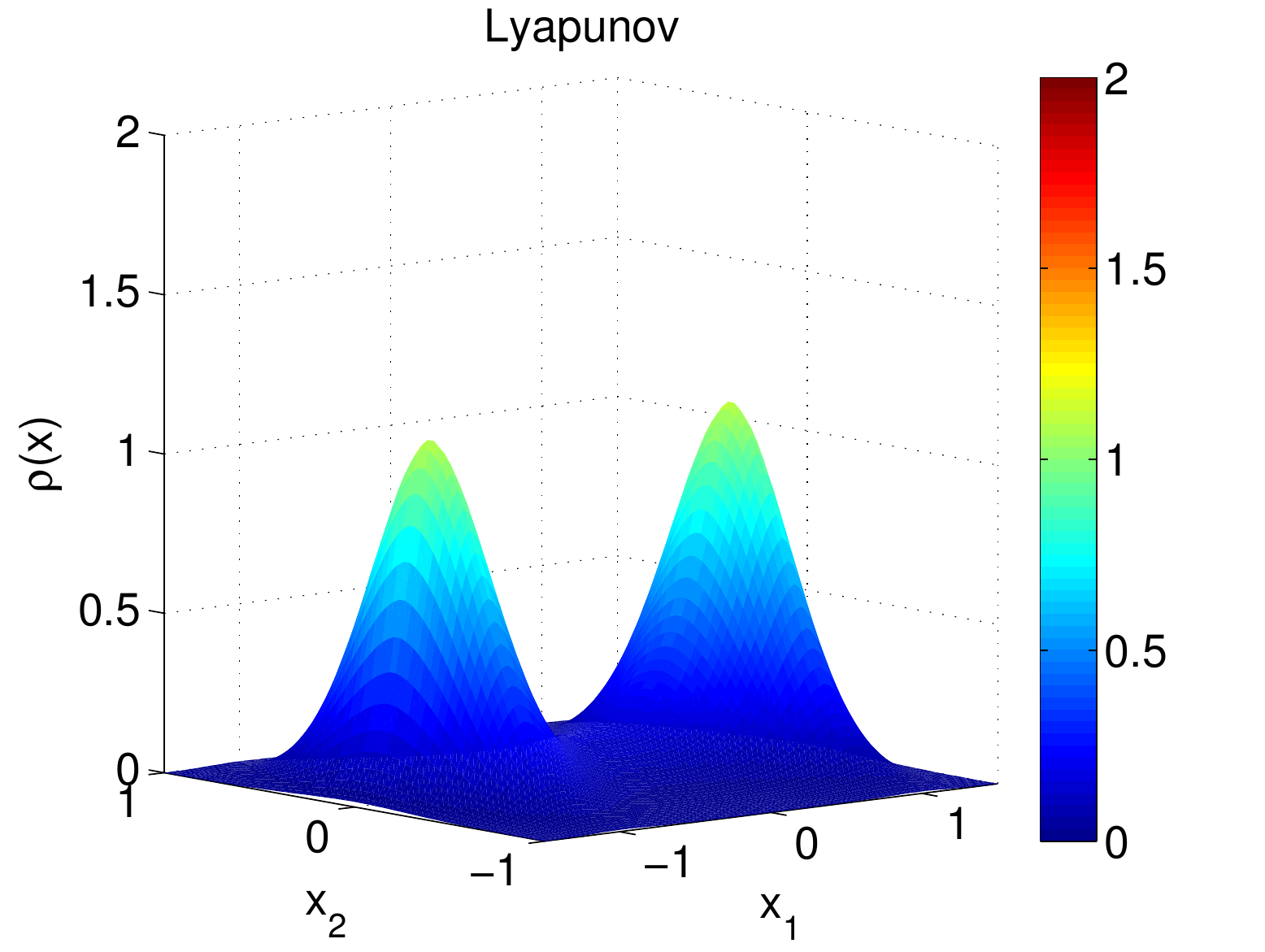}
    \caption{$t=0.15$.}
    \end{subfigure}
  \caption{Temporal evolution of the state $\rho$.}
  \label{fig:state_rand}
    \end{figure}
This phenomenon is understood better when considering  snapshots of
the solution for different time steps. In Figure \ref{fig:state_rand} the
results are shown for $t=0.01$ and $t=0.15.$ Except for the case of the rotated
 $\alpha,$ all solutions have approximately approached the stationary state at
time $t=0.15$ already. Taking into account the shape of the stationary
distribution, the shape of $\alpha$ for the Riccati-based and the
Lyapunov-based approach are intuitive. In both cases, the control allows to
lower the potential around the left well and to raise it around the right well.
Obviously, since $u$ is allowed to be positive as well as negative, this effect
can be reversed such that the right well is given preference. On the other
hand, when the shape function is subject to a rotation as done in the
experiments, both wells are equally important and no direct transition between
them is possible. This is exactly what happens in the simulation. The control
law pushes the particle first to the upper boundary before it is moved
back to the lower boundary, see Figure \ref{fig:state_rand}.
\begin{figure}[htb]
  \begin{subfigure}[c]{0.33\textwidth}
    \includegraphics[scale=0.29]{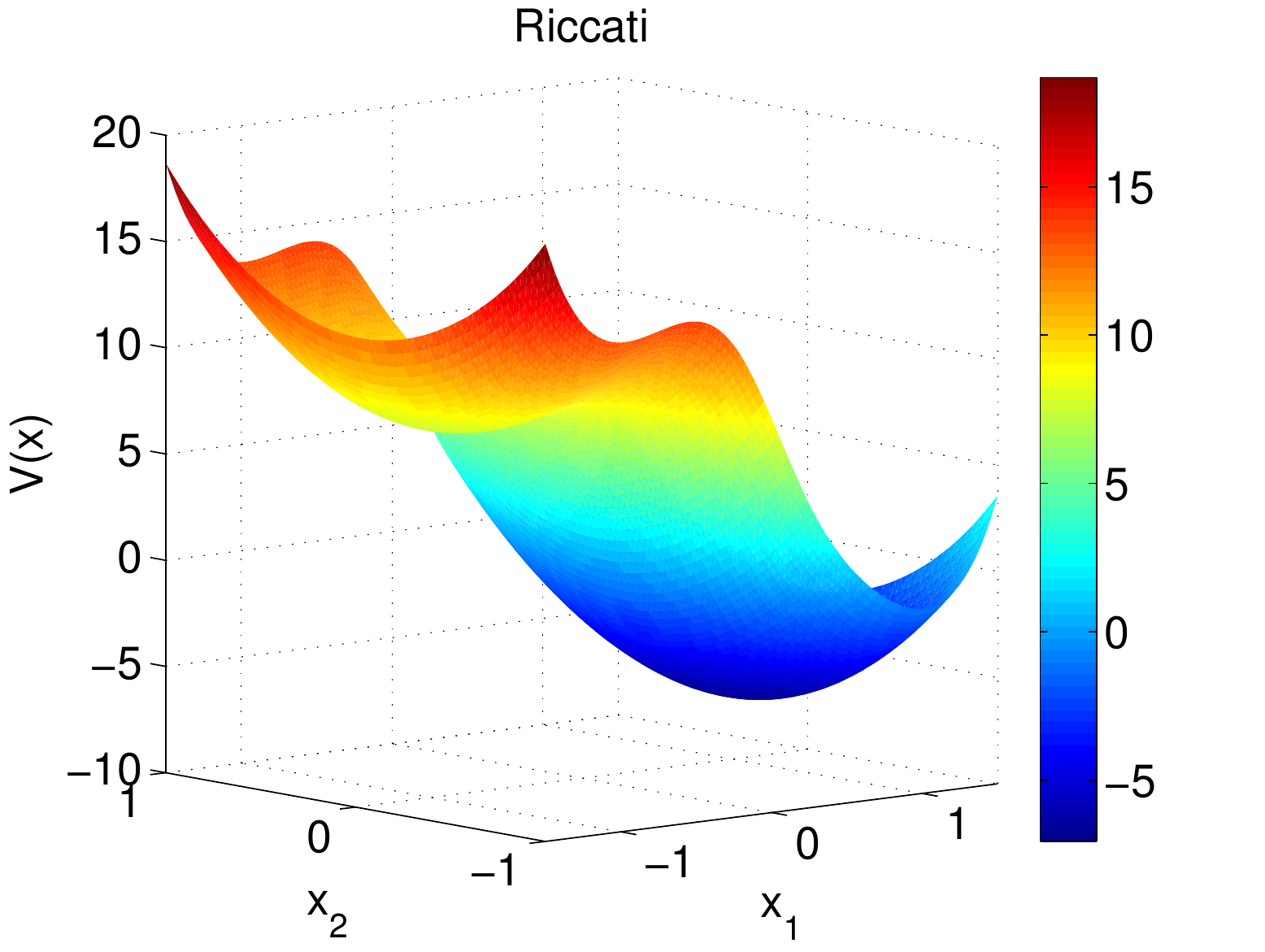}
    \caption{$t=0.01$.}
    \end{subfigure}\begin{subfigure}[c]{0.33\textwidth}
    \includegraphics[scale=0.29]{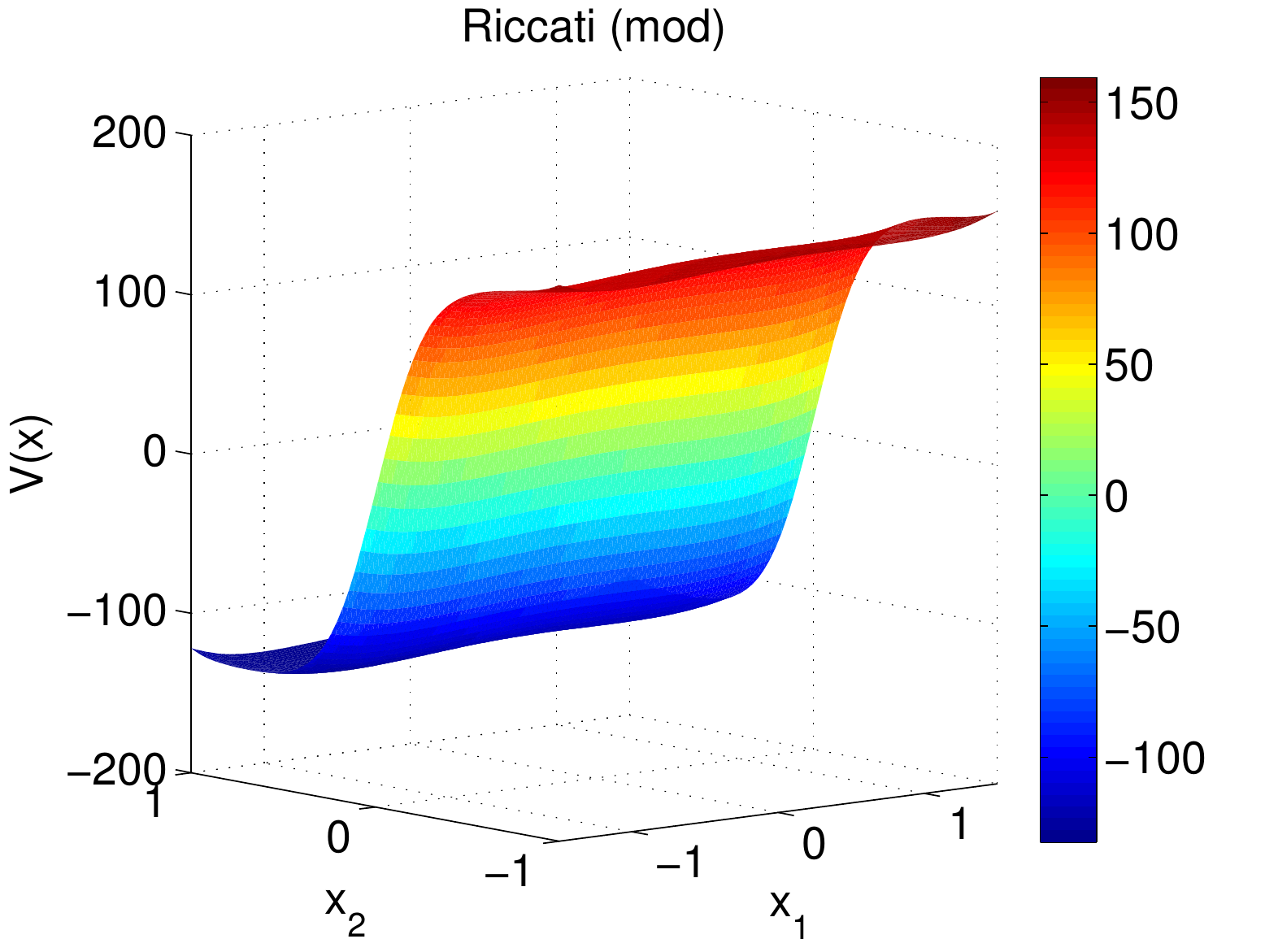}
    \caption{$t=0.01$.}
    \end{subfigure}\begin{subfigure}[c]{0.33\textwidth}
    \includegraphics[scale=0.29]{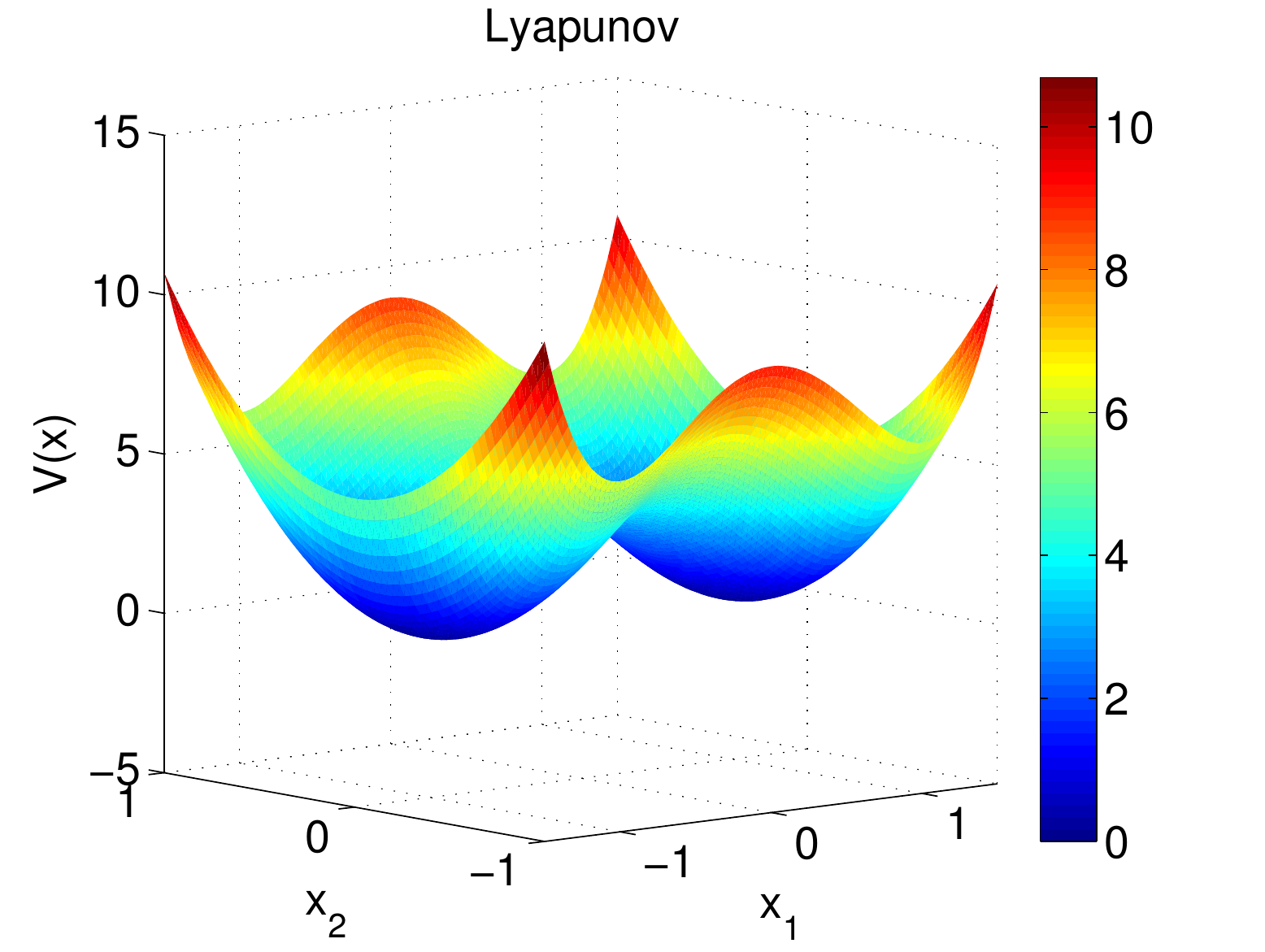}
    \caption{$t=0.01$.}
    \end{subfigure} \\ \begin{subfigure}[c]{0.33\textwidth}
    \includegraphics[scale=0.29]{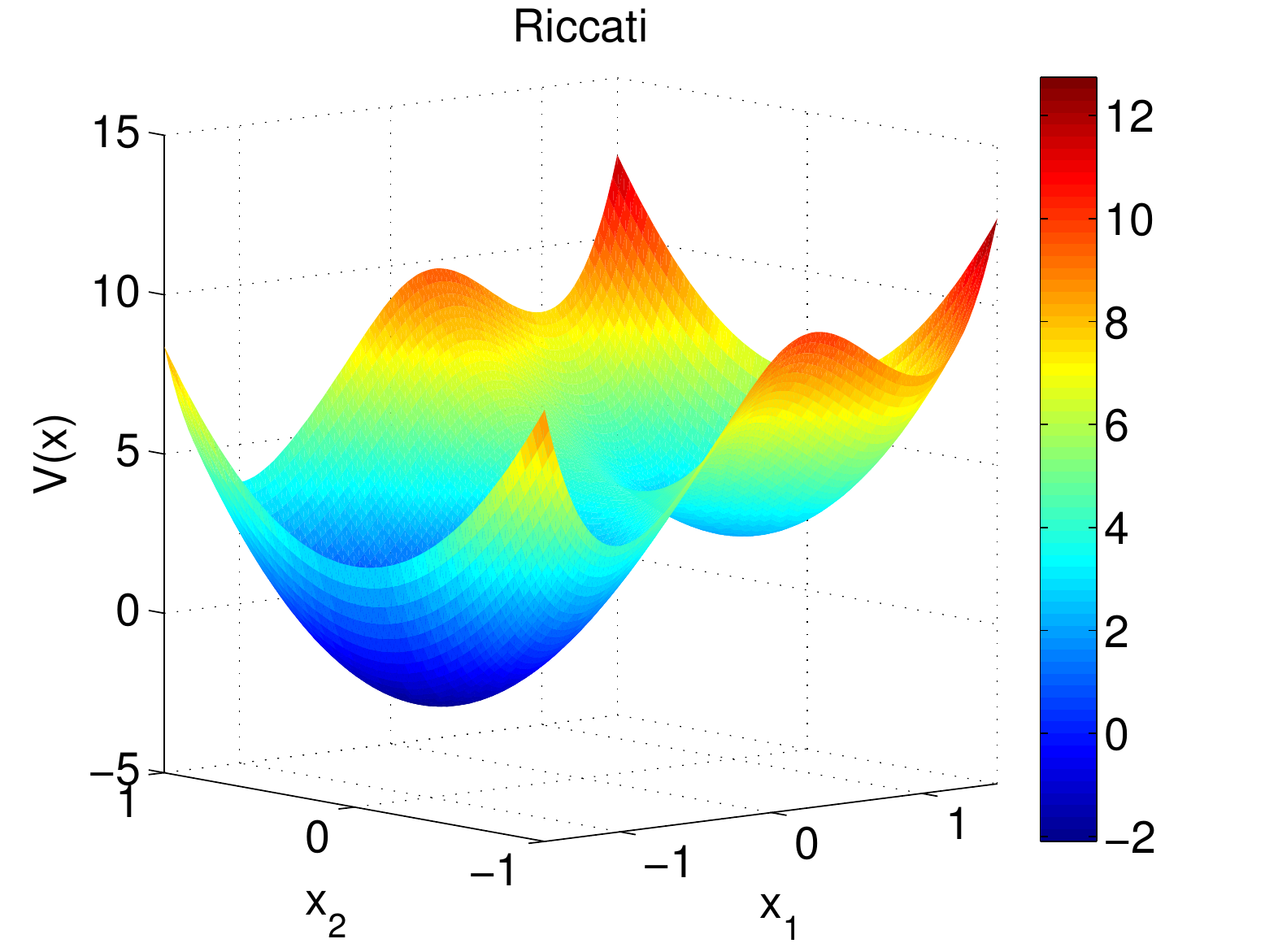}
    \caption{$t=0.15$.}
    \end{subfigure}\begin{subfigure}[c]{0.33\textwidth}
    \includegraphics[scale=0.29]{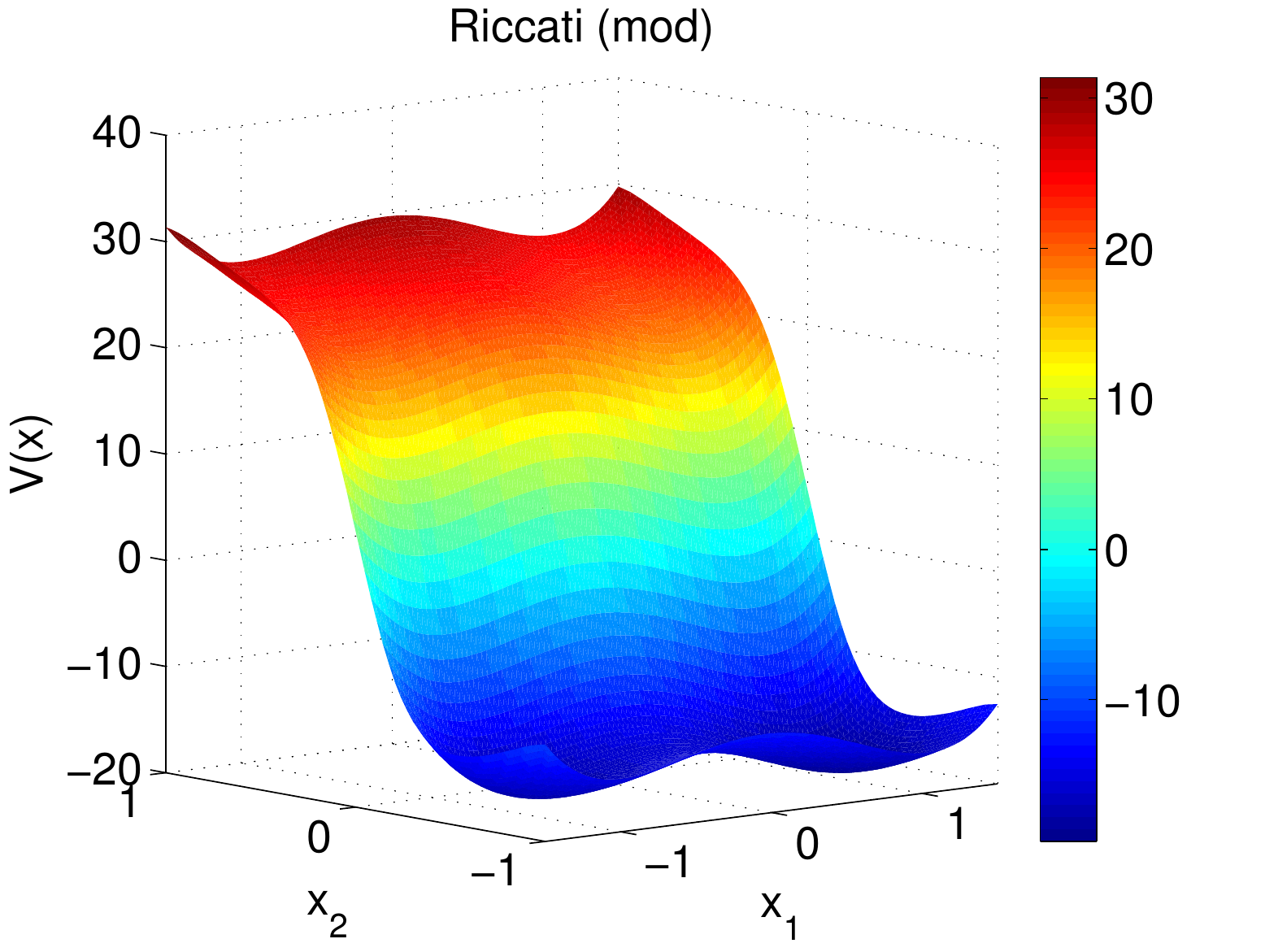}
    \caption{$t=0.15$.}
    \end{subfigure}\begin{subfigure}[c]{0.33\textwidth}
    \includegraphics[scale=0.29]{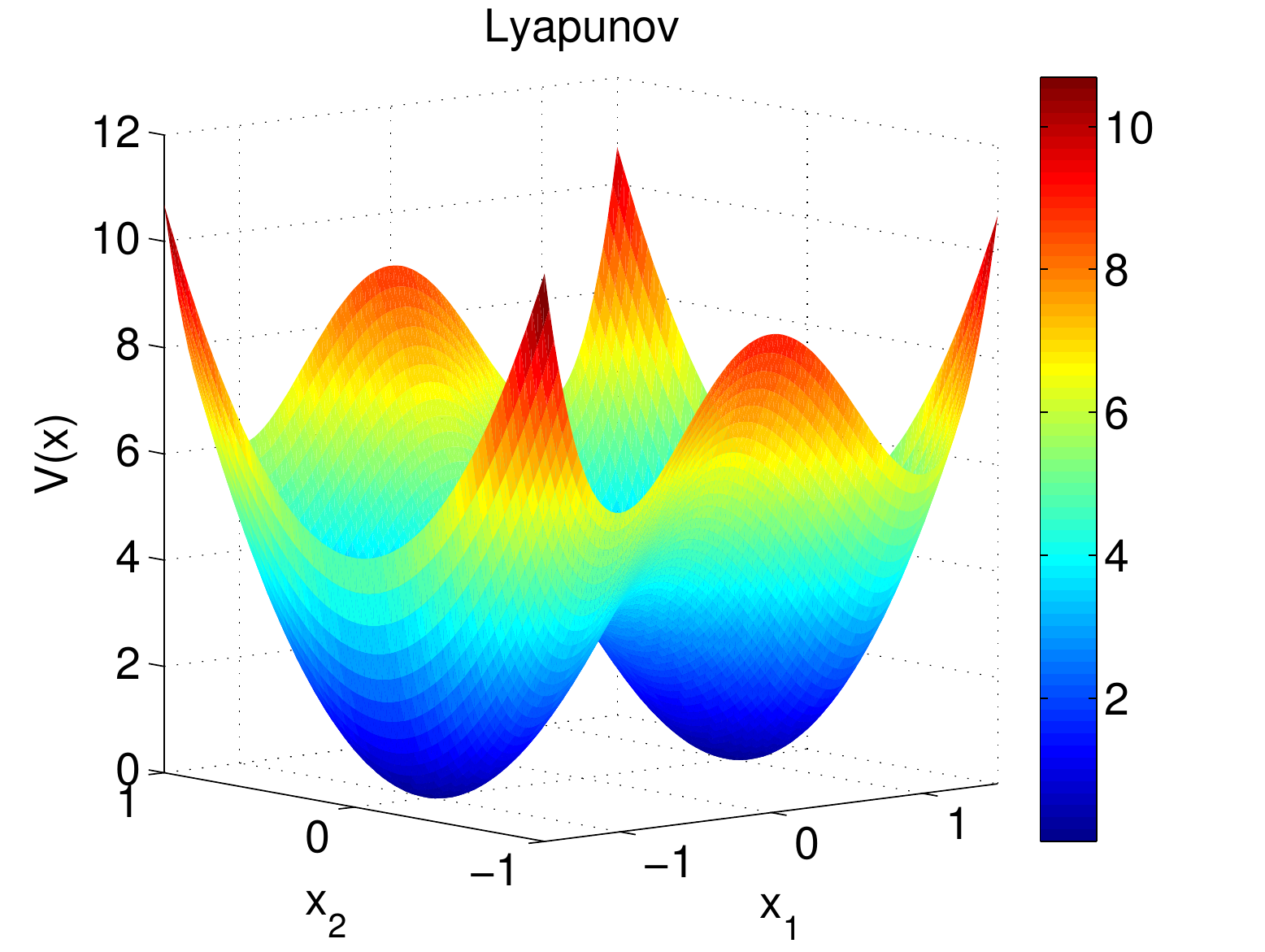}
    \caption{$t=0.15$.}
    \end{subfigure}
  \caption{Temporal evolution of the potential $V(x)$.}
  \label{fig:pot_rand}
    \end{figure}
Figure \ref{fig:pot_rand} visualizes the influence of the different control
strategies on the potential $G(x).$ Again, the effect of the modified Riccati
approach is the lowering of the potential on the bottom and top boundary
instead of the left and right boundary, respectively. It is further worthwhile
to note that the Lyapunov-based feedback law influences the potential only
moderately.

\subsection{The particle located in one well}

For the second test case, we assume the particle is initially
located in the center of the right potential well, i.e., the initial state
reflects
a numerical point mass at $x_1=1,x_2=0.$
\begin{figure}[htb]
\begin{center}
  \includegraphics{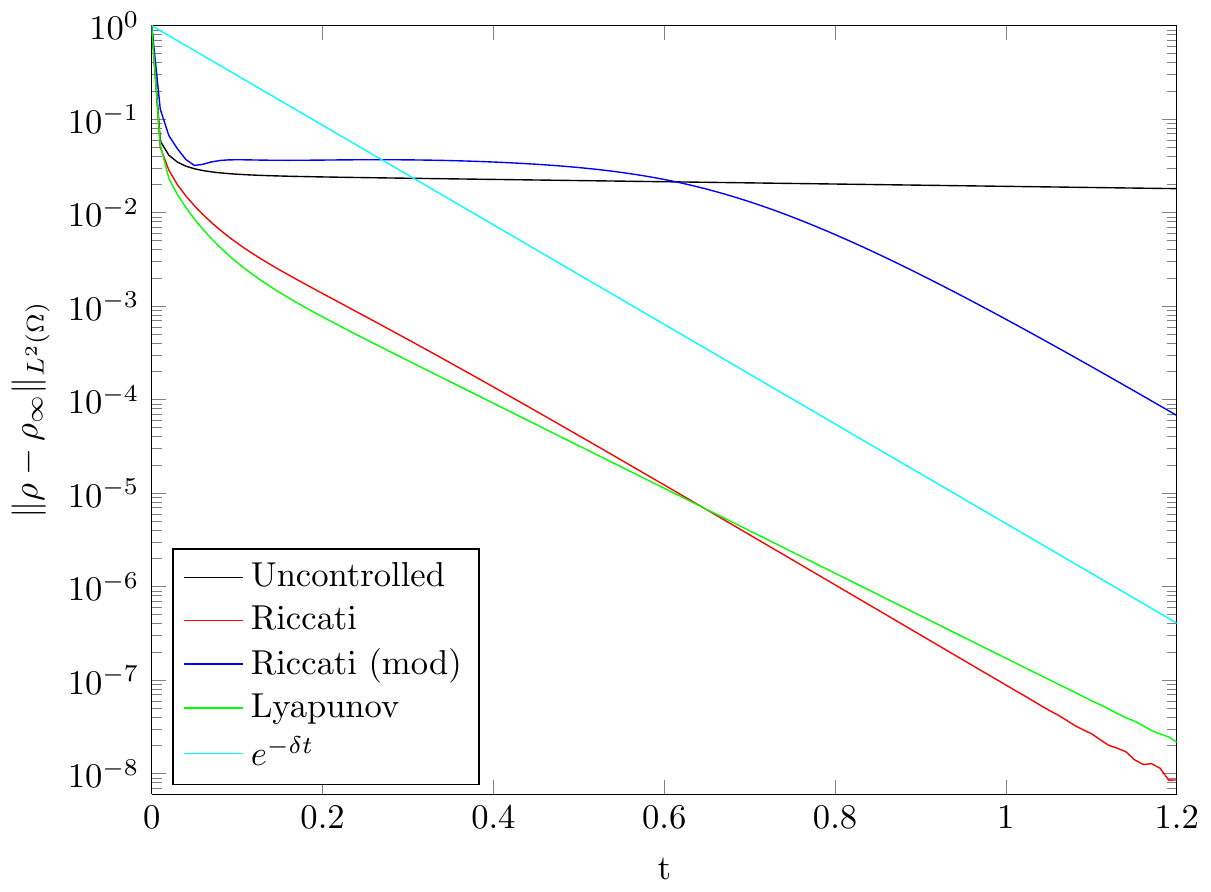}
  \caption{Comparison of $L^2(\Omega)$-norm evolution}
  \label{fig:L2_dirac}
  \end{center}
\end{figure}
As is shown in Figure \ref{fig:L2_dirac}, in this case the convergence rate of
the uncontrolled system is undesirably slow. We already mentioned that this is
mainly reflected by the fact that the particle has to overcome the
``energy barrier'' between the potential wells. Here, the feedback laws act by
lowering this
barrier, hence allowing the particle to ``jump'' into the left potential well.
\begin{figure}[htb]
  \begin{subfigure}[c]{0.24\textwidth}
    \includegraphics[scale=0.22]{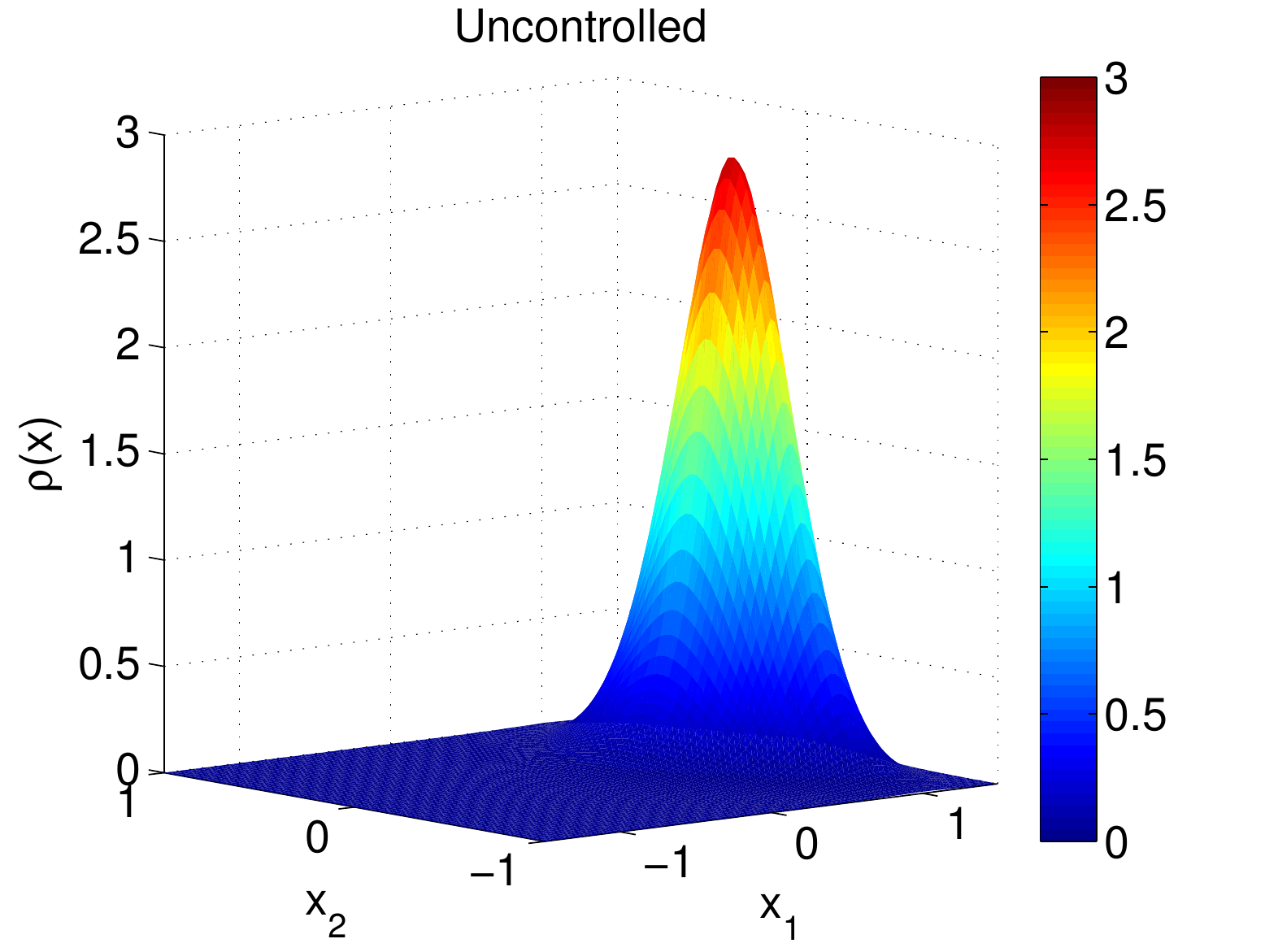}
    \caption{$t=0.1$.}
    \end{subfigure}\begin{subfigure}[c]{0.24\textwidth}
    \includegraphics[scale=0.22]{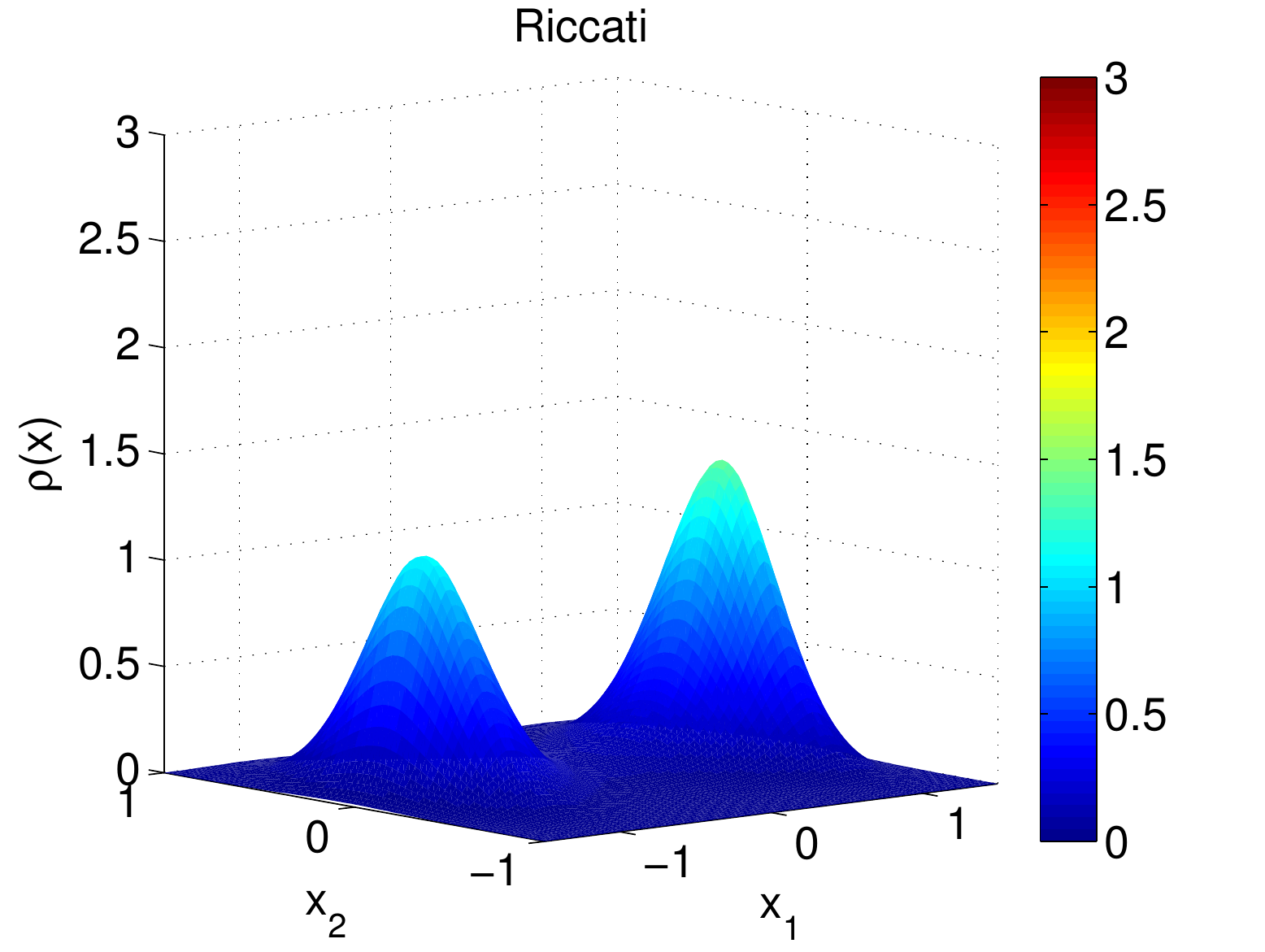}
    \caption{$t=0.1$.}
    \end{subfigure}\begin{subfigure}[c]{0.24\textwidth}
    \includegraphics[scale=0.22]{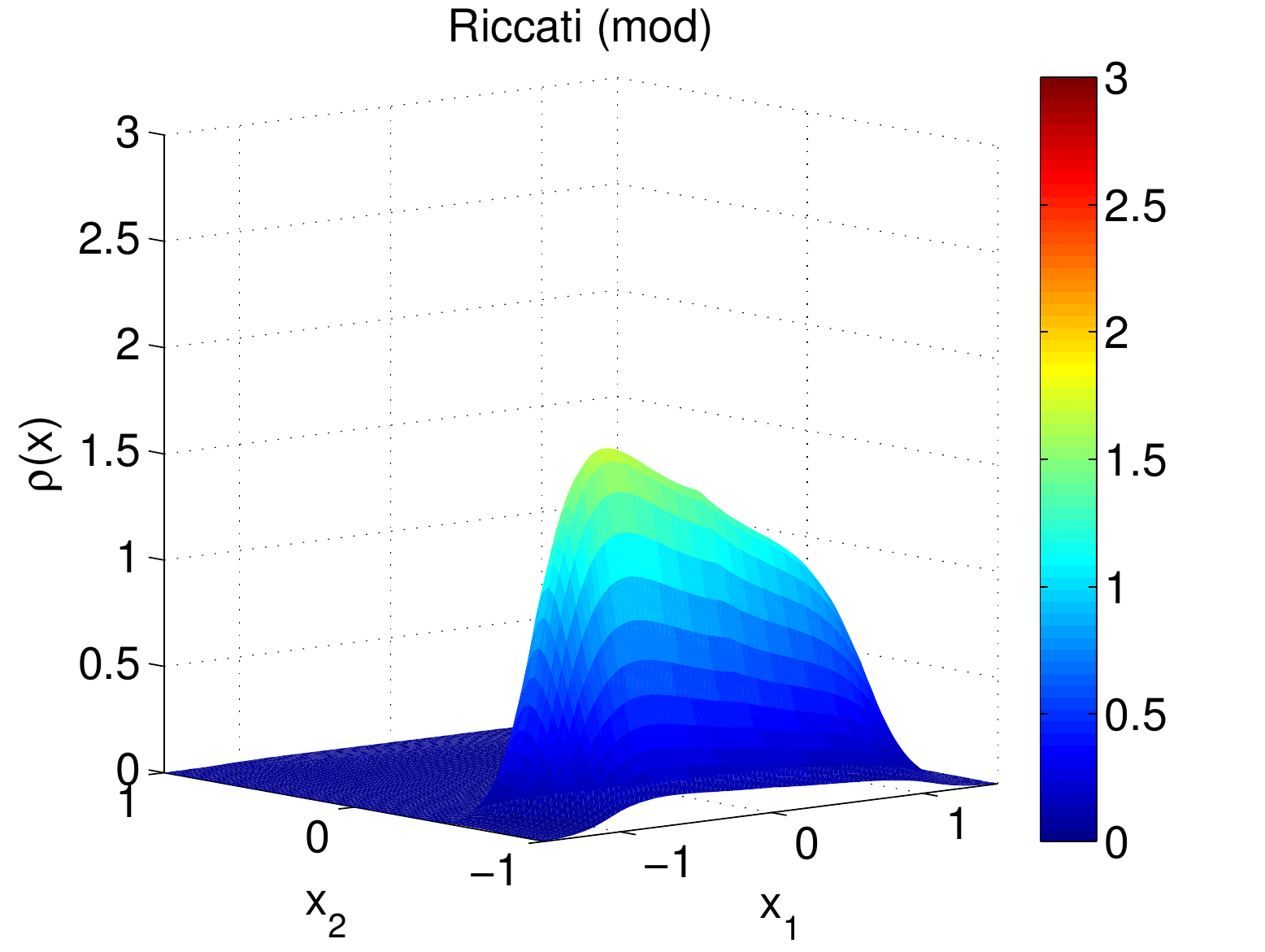}
    \caption{$t=0.1$.}
    \end{subfigure}\begin{subfigure}[c]{0.24\textwidth}
    \includegraphics[scale=0.22]{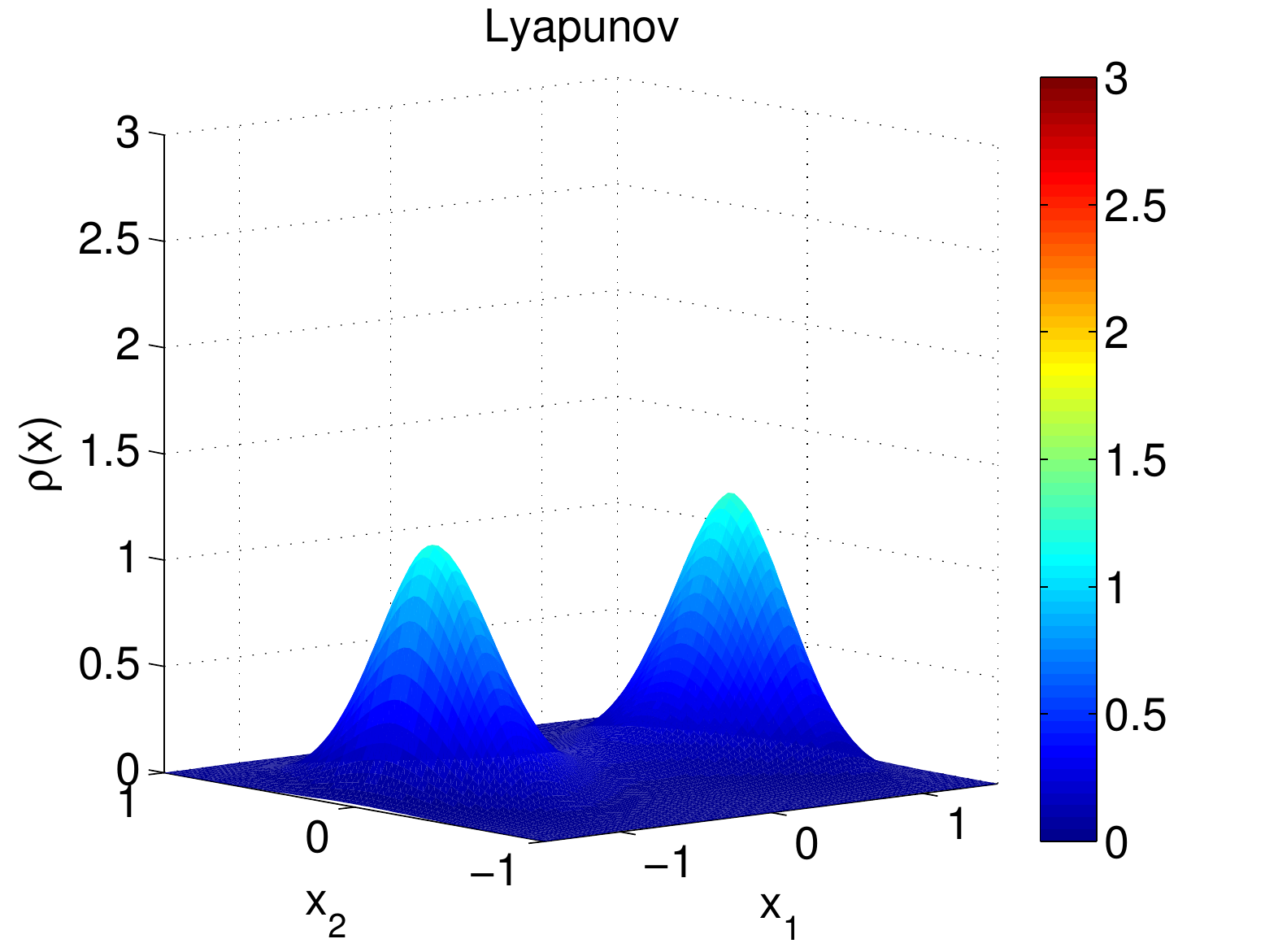}
    \caption{$t=0.1$.}
    \end{subfigure}\\ \begin{subfigure}[c]{0.24\textwidth}
    \includegraphics[scale=0.22]{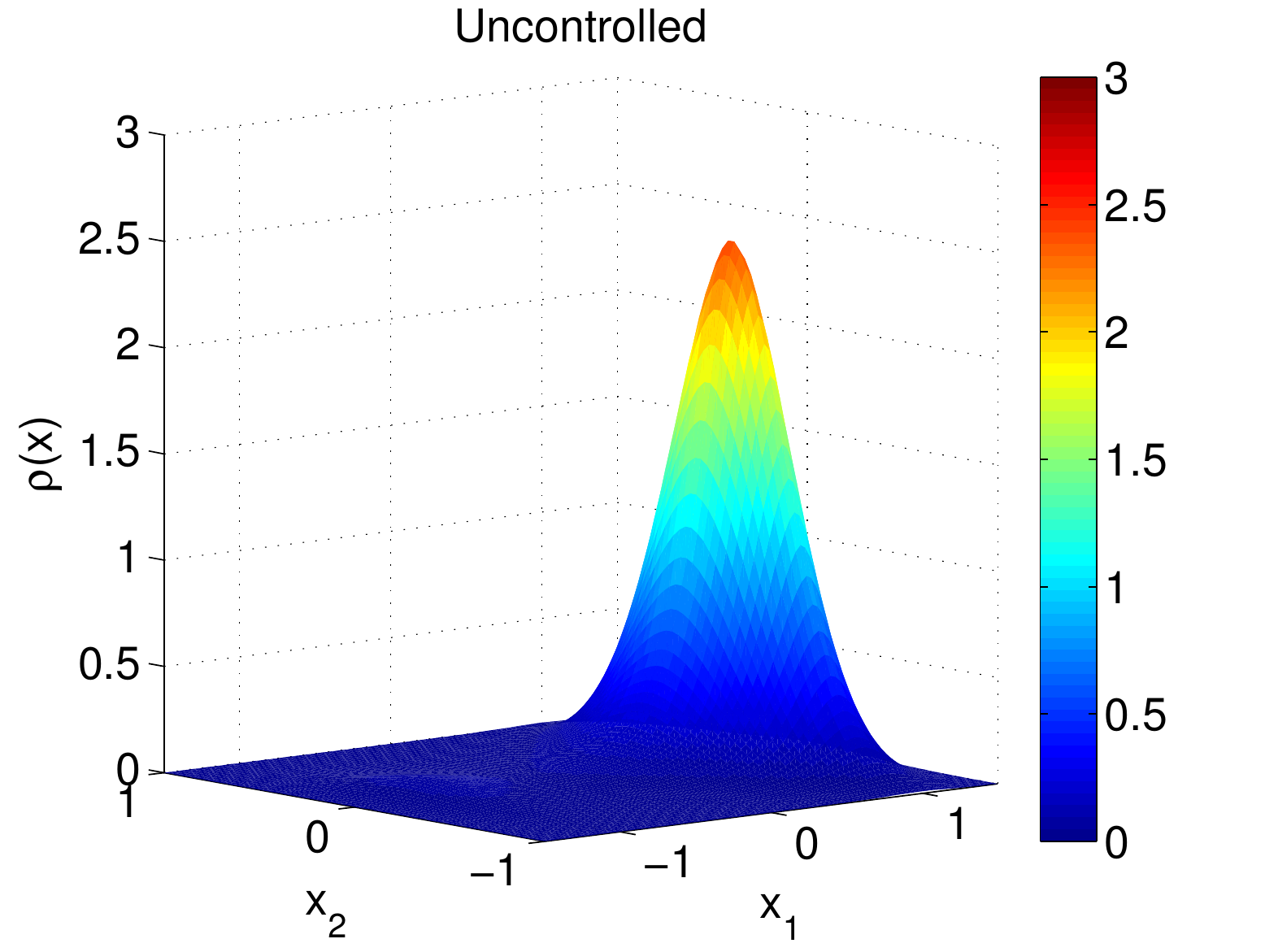}
    \caption{$t=0.5$.}
    \end{subfigure}\begin{subfigure}[c]{0.24\textwidth}
    \includegraphics[scale=0.22]{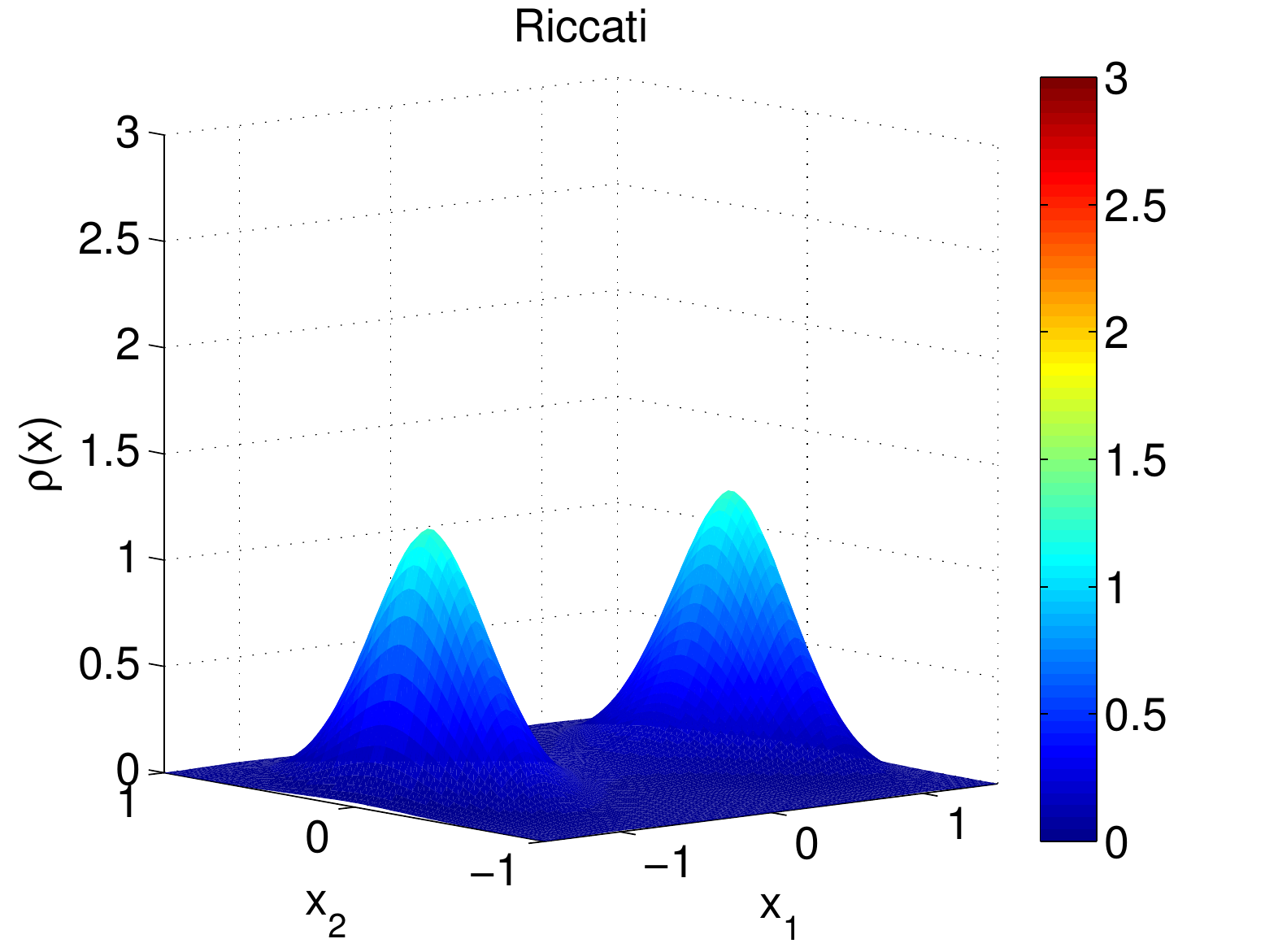}
    \caption{$t=0.5$.}
    \end{subfigure}\begin{subfigure}[c]{0.24\textwidth}
    \includegraphics[scale=0.22]{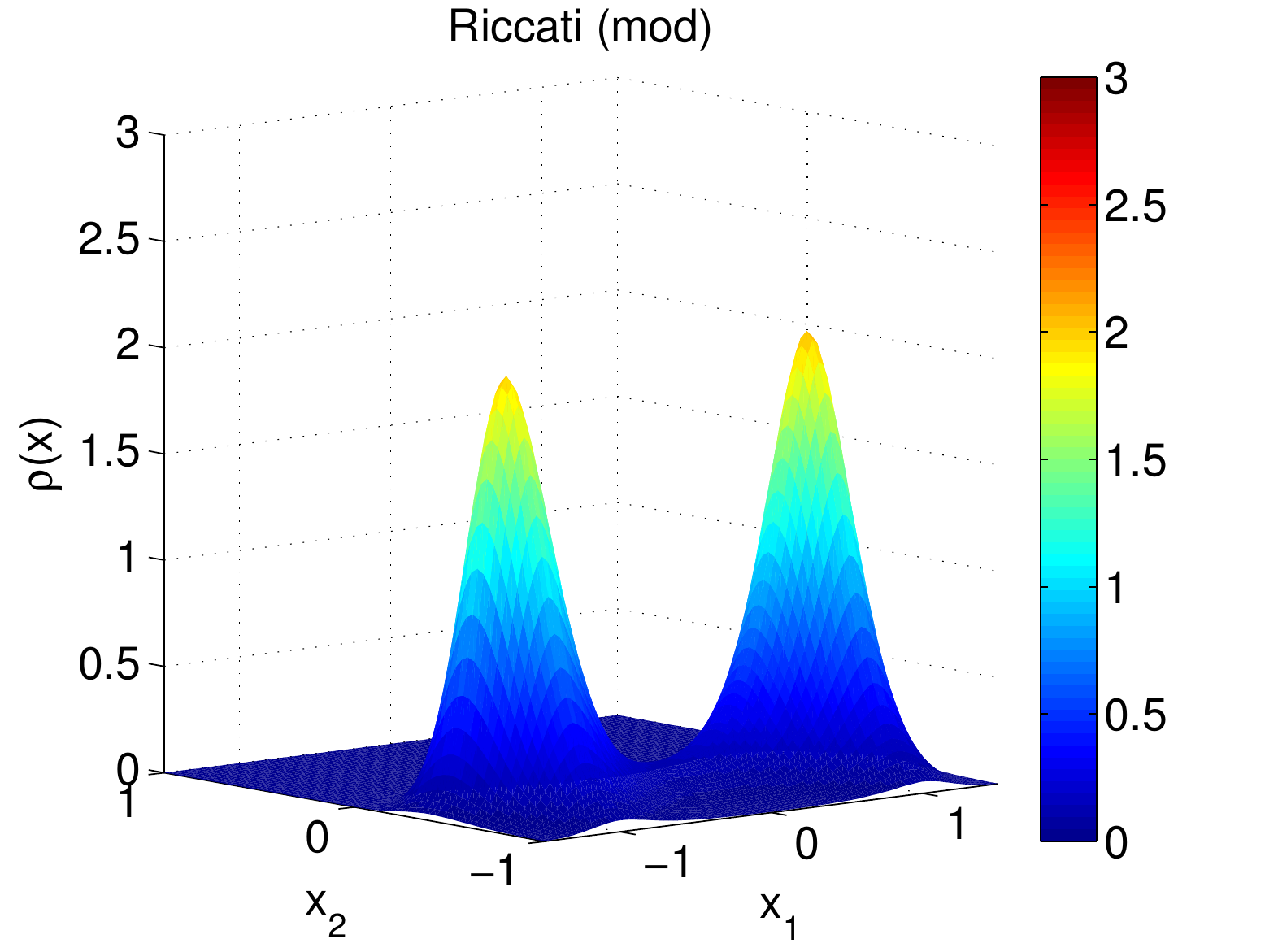}
    \caption{$t=0.5$.}
    \end{subfigure}\begin{subfigure}[c]{0.24\textwidth}
    \includegraphics[scale=0.22]{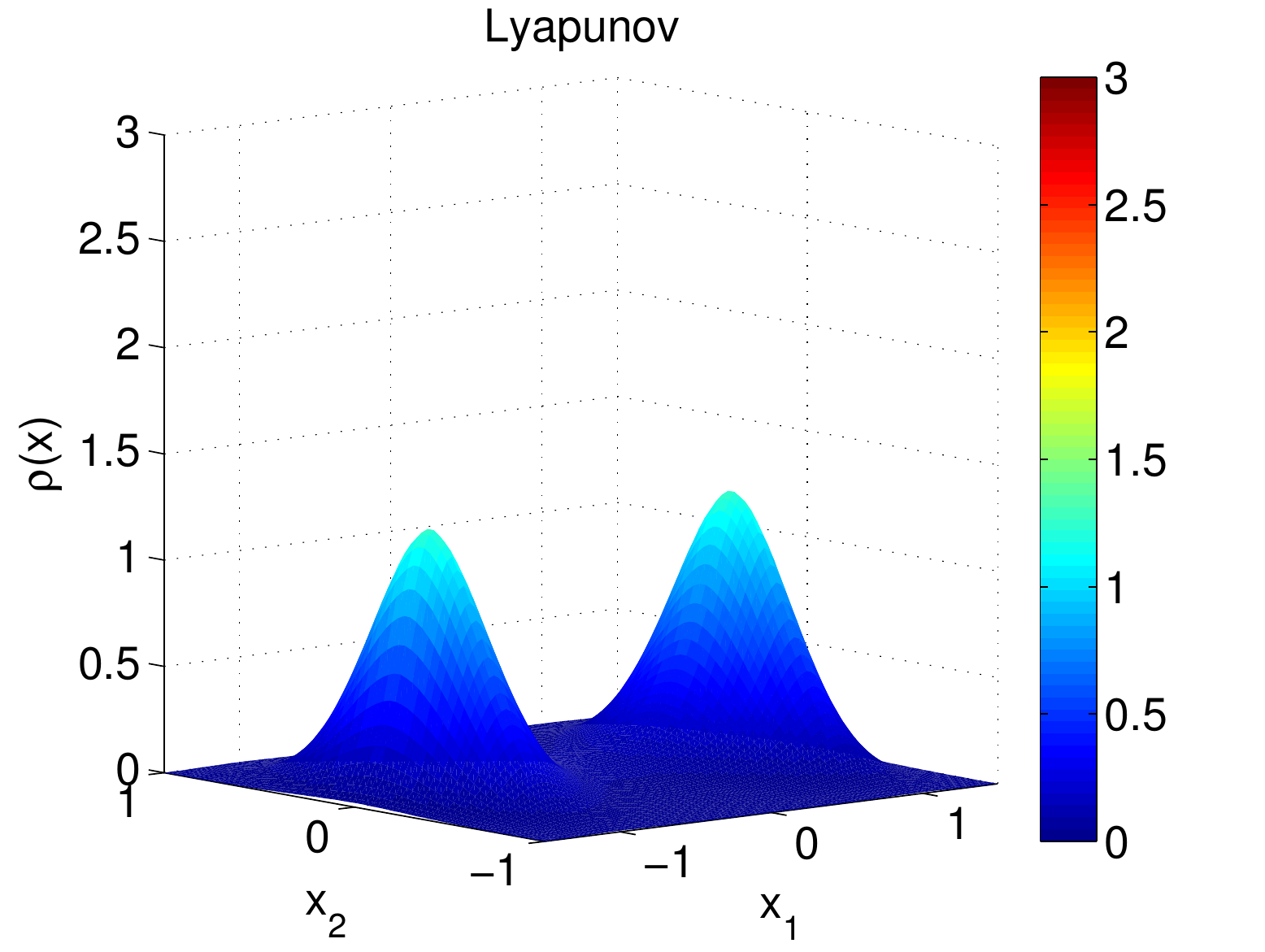}
    \caption{$t=0.5$.}
    \end{subfigure}
  \caption{Temporal evolution of the state $\rho$.}
  \label{fig:state_dirac}
    \end{figure}
As in the previous case, Figure \ref{fig:state_dirac} and Figure
\ref{fig:potential_dirac} show the temporal evolution of the state of the
systems as well as the influence on the potential.
\begin{figure}[htb]
  \begin{subfigure}[c]{0.33\textwidth}
    \includegraphics[scale=0.29]{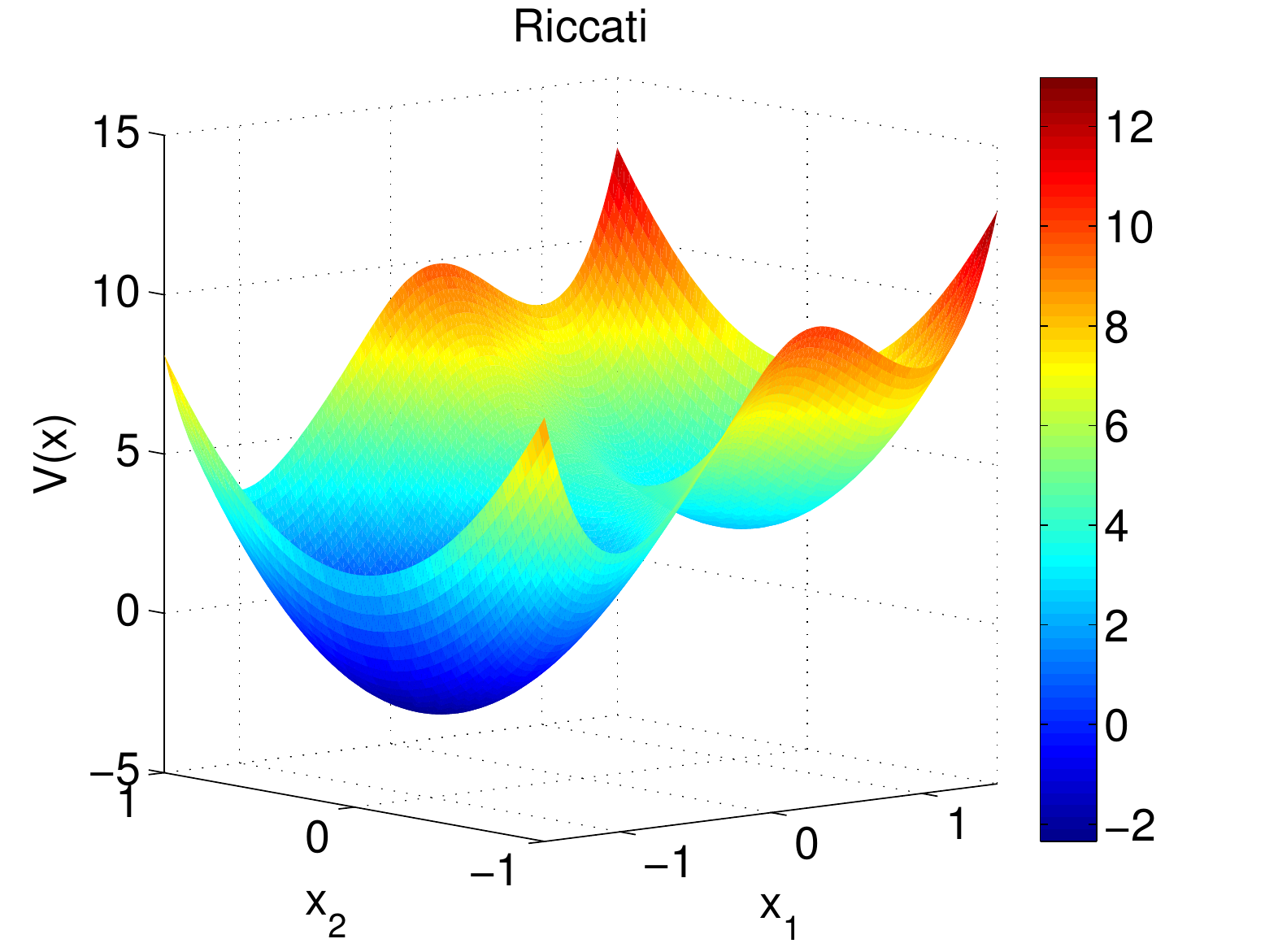}
    \caption{$t=0.1$.}
    \end{subfigure}\begin{subfigure}[c]{0.33\textwidth}
    \includegraphics[scale=0.29]{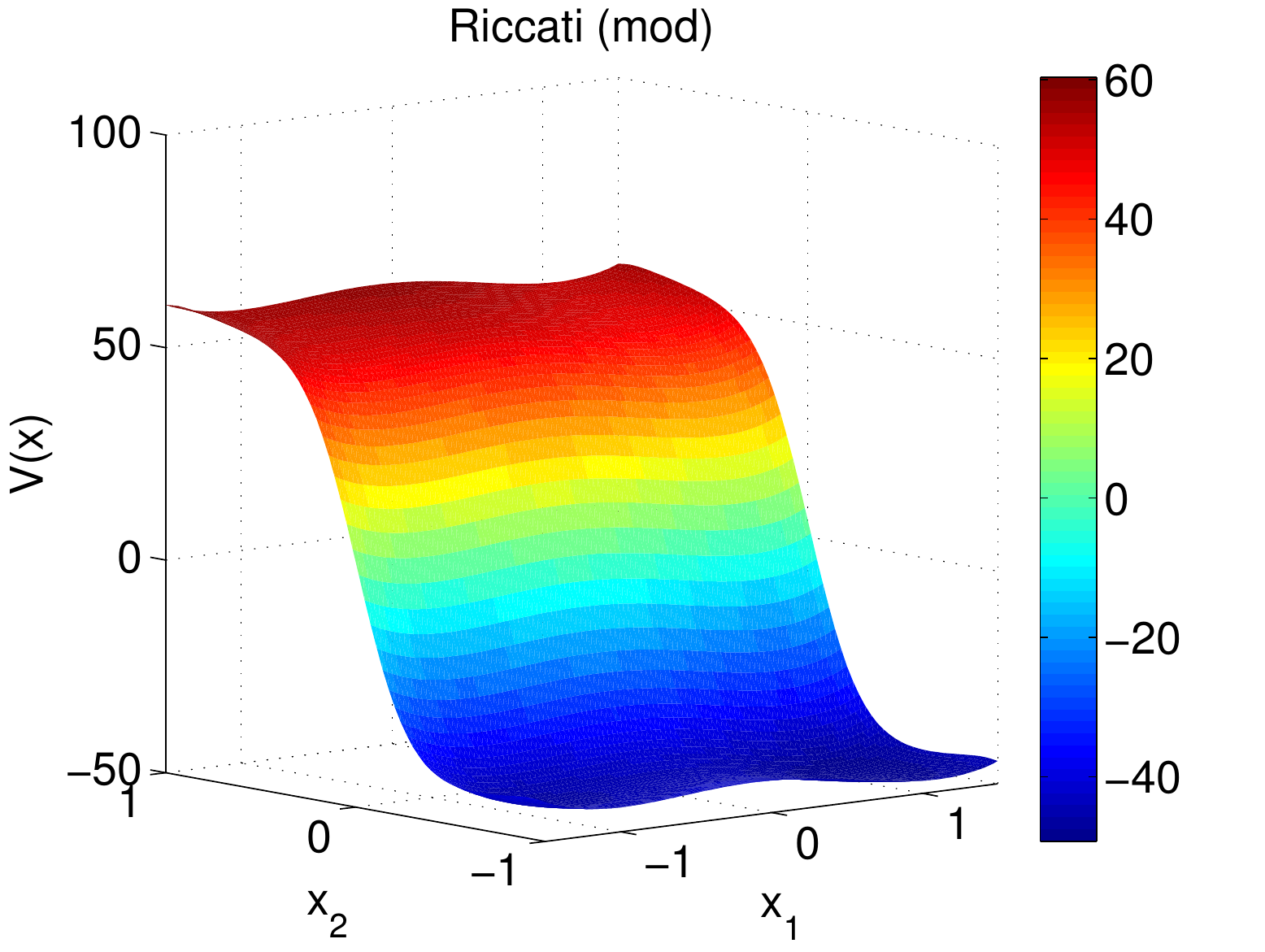}
    \caption{$t=0.1$.}
    \end{subfigure}\begin{subfigure}[c]{0.33\textwidth}
    \includegraphics[scale=0.29]{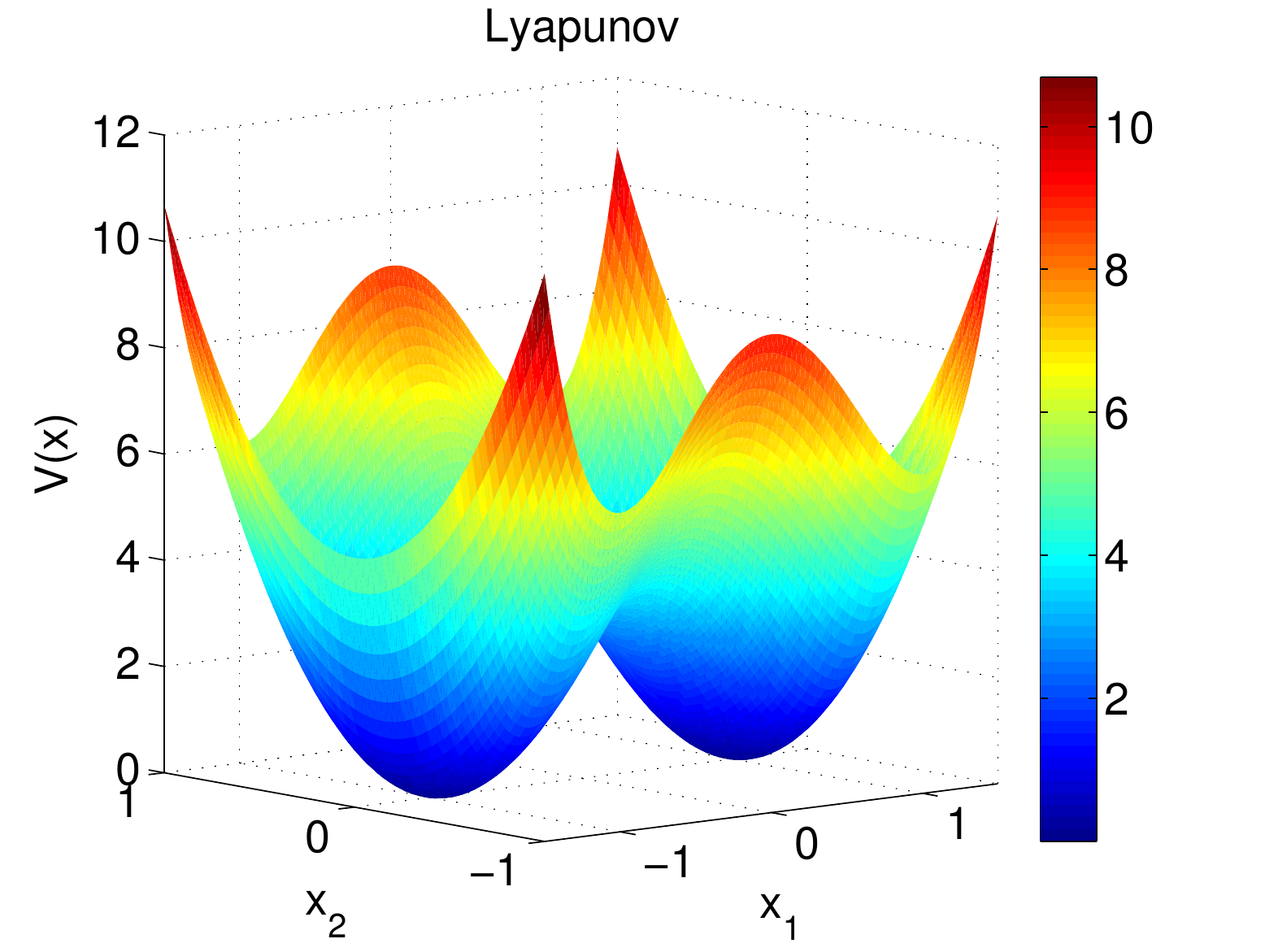}
    \caption{$t=0.1$.}
    \end{subfigure} \\ \begin{subfigure}[c]{0.33\textwidth}
    \includegraphics[scale=0.29]{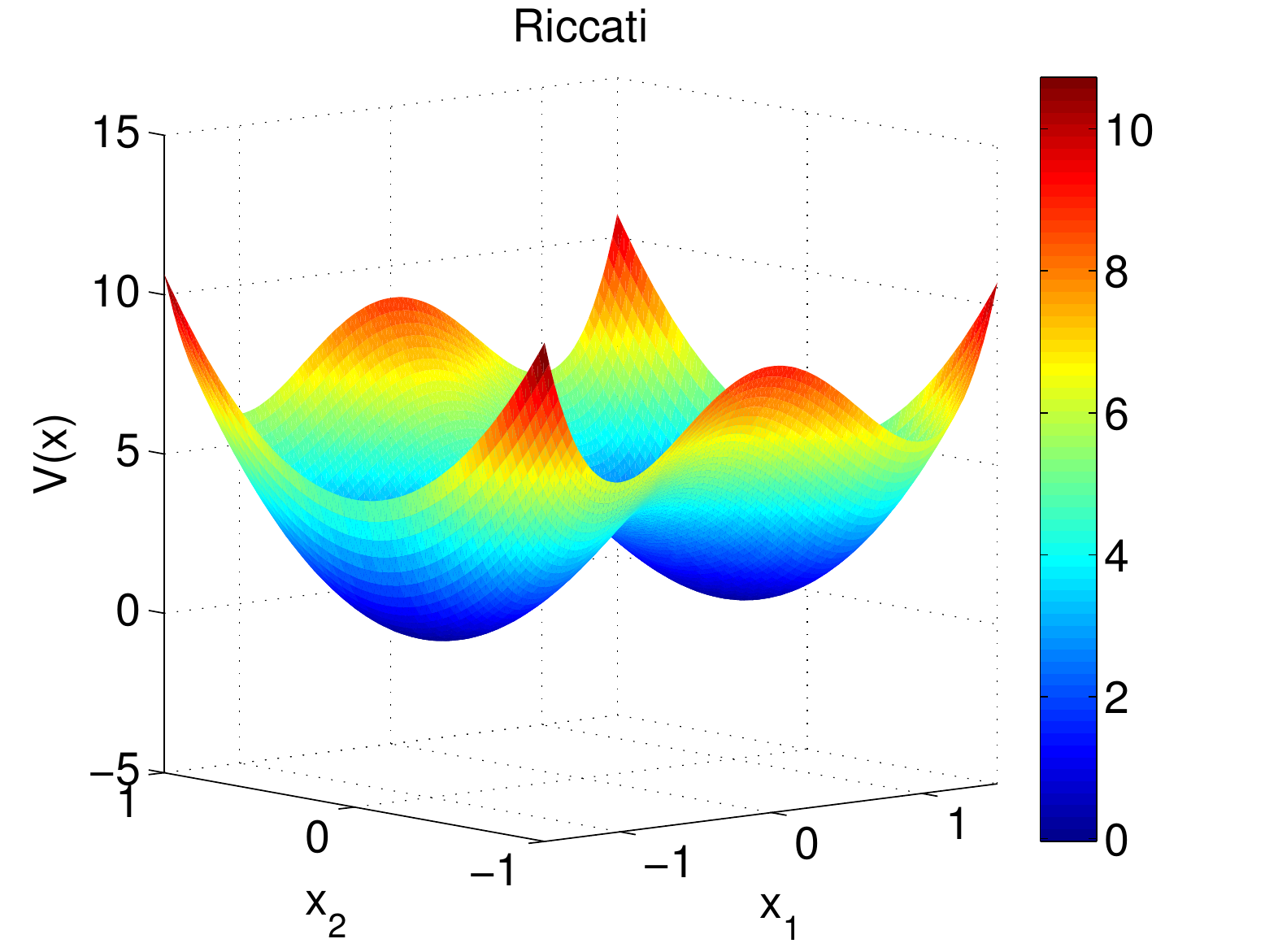}
    \caption{$t=0.5$.}
    \end{subfigure}\begin{subfigure}[c]{0.33\textwidth}
    \includegraphics[scale=0.29]{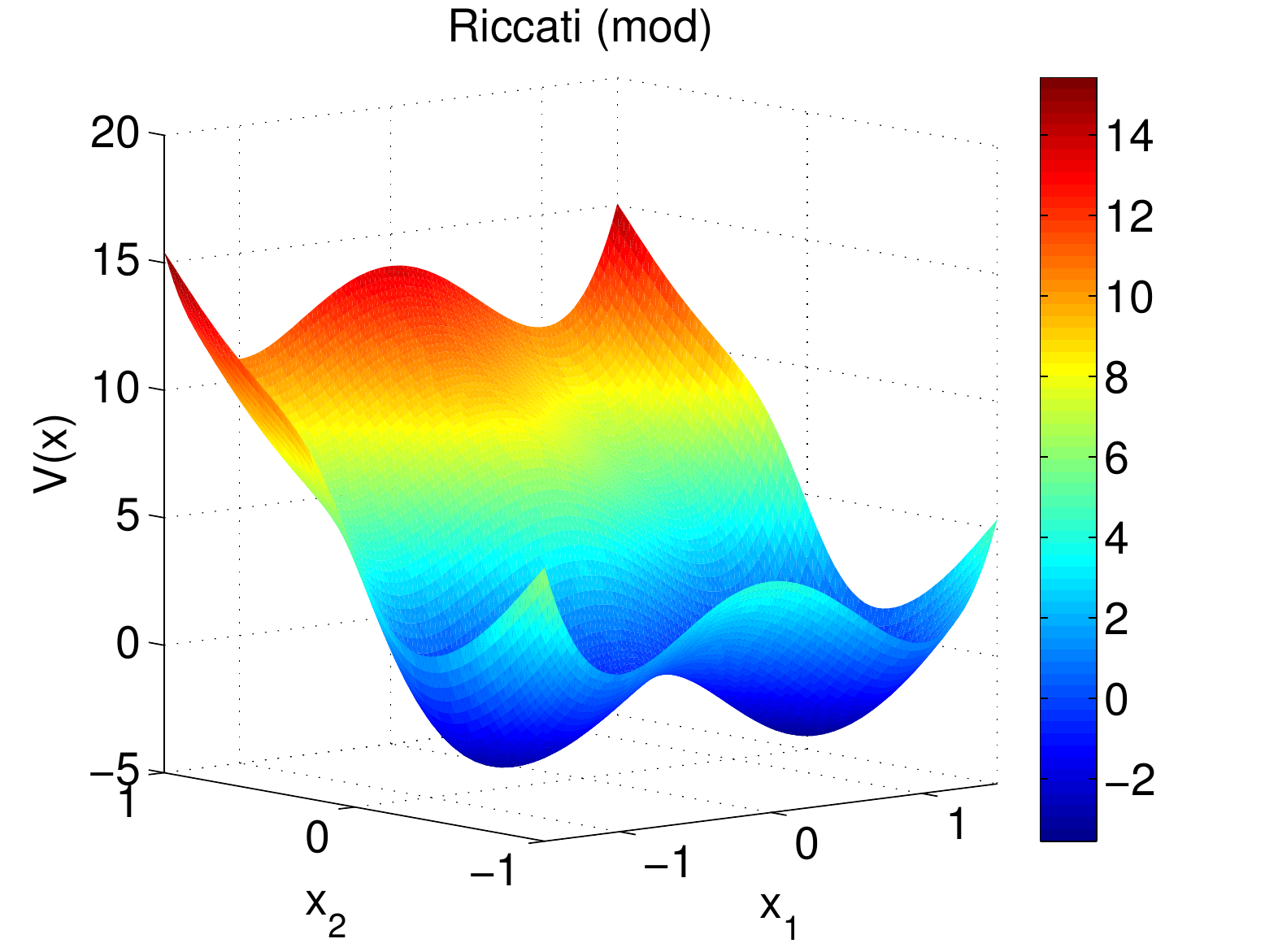}
    \caption{$t=0.5$.}
    \end{subfigure}\begin{subfigure}[c]{0.33\textwidth}
    \includegraphics[scale=0.29]{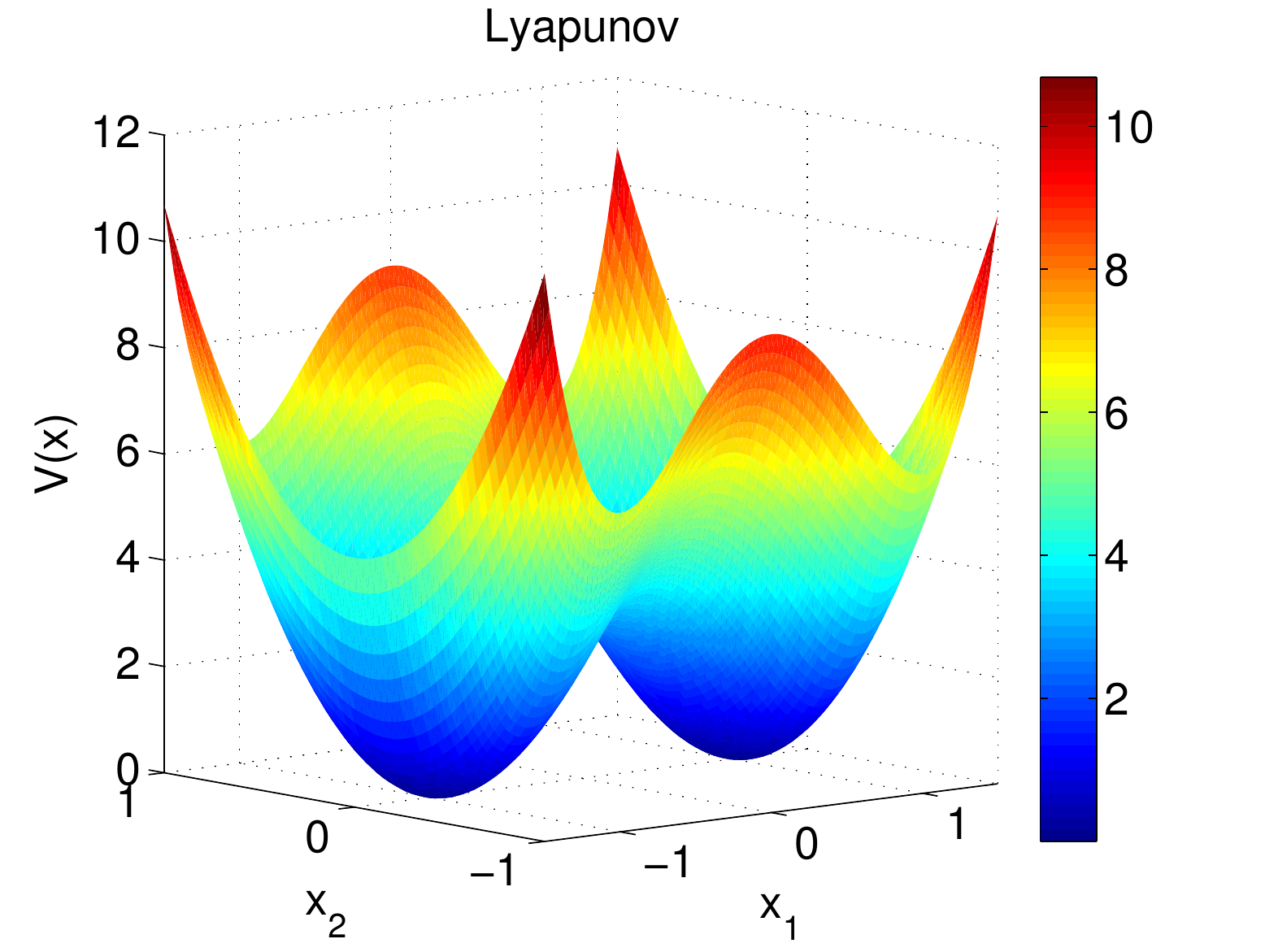}
    \caption{$t=0.5$.}
    \end{subfigure}
  \caption{Temporal evolution of the potential $V(x)$.}
  \label{fig:potential_dirac}
    \end{figure}
    Again, the modified Riccati approach acts on the dynamics by first
attracting the particle at the lower boundary from where it is slowly moved to
the center of the wells.

\section*{Acknowledgements}

This work was supported in part by the ERC advanced grant 668998 (OCLOC) under
the EU's H2020
research program.

\bibliographystyle{siam}

\end{document}